\documentclass [12pt]{amsart}
\usepackage[utf8]{inputenc}
\usepackage[OT1]{fontenc}
\pdfoutput=1
\usepackage{amsmath,amssymb,amsfonts}
\usepackage{mathtools}%
\usepackage{dsfont}
\usepackage[english]{babel}%
\usepackage{comment}%
\usepackage[unicode]{hyperref}%
\usepackage{bbm}%
\usepackage{bm}
\usepackage{mathrsfs}%

\usepackage{tikz,graphicx,color}
\usepackage{tikz-cd}%
\usepackage{tikz-3dplot}%
\usetikzlibrary{calc}%
\usetikzlibrary{arrows}%
\usetikzlibrary{shapes}%
\usetikzlibrary{patterns}%
\usetikzlibrary{positioning}%
\usetikzlibrary{arrows.meta}
\usetikzlibrary{decorations.markings}
\usetikzlibrary{knots}

\usepackage{epstopdf}%

\usepackage[arrow]{xy}%
\usepackage{diagbox}%
\usepackage[normalem]{ulem}
\usepackage{subfig}%
\usepackage{arcs}%
\usepackage{xcolor}%
\usepackage{xspace}

\usepackage[subtle,tracking=normal,mathdisplays=tight]{savetrees}
\usepackage[margin=1.0in]{geometry}%

\usepackage{enumitem}
\usepackage{comment}
\usepackage{letltxmacro}
\usepackage{etoolbox}

\newtheorem{theorem}{Theorem}[section]

\newtheorem{lemma}[theorem]{Lemma}

\newtheorem{proposition}[theorem]{Proposition}
\newtheorem{corollary}[theorem]{Corollary}
\theoremstyle{definition}

\theoremstyle{definition}
\newtheorem{definition}[theorem]{Definition}

\theoremstyle{definition}

\theoremstyle{definition}

\theoremstyle{definition}

\newcommand{\C}{\mathbb{C}}

\newcommand{\Bk}{\mathbb{K}}
\newcommand{\Z}{\mathbb{Z}}

\newcommand{\vp}{\varphi}
\newcommand{\X}{\mathcal{X}}
\newcommand{\A}{\mathcal{A}}
\newcommand{\pt}{p}
\newcommand{\pol}{q}
\newcommand{\pun}{\pi}
\newcommand{\puns}{S}
\newcommand{\bn}{b}
\newcommand{\bns}{\mathcal{B}}
\newcommand{\claut}{\vp}
\newcommand{\taginv}{T^{\bowtie}_\pun}

\newcommand{\surf}{\Sigma}

\newcommand{\sgclaut}{\claut_\pt^{\pm}}
\newcommand{\reclaut}{\claut_{\pun,\zeta}}

\tikzset{
    node/.style = {circle, draw, fill=white, inner sep=0pt, minimum size=6pt, outer sep = 2pt},
    arrow/.style = {-{Stealth[length=5pt, width=4pt]}}
    amidarrow/.style  args={#1,#2}={postaction={decorate, decoration={markings, mark=at position #2 with {\arrow[scale=1.2]{Stealth[#1]}}}}}
}
\tikzset{square/.style={regular polygon, regular polygon sides=4, draw, inner sep=0,outer sep = 2pt,minimum size=8pt}}

\DeclareMathOperator{\Aut}{Aut}

\title[On the Mysterious Points conjecture]{Locally Acyclic Surface Type and Finite Type  Cluster Algebras Have No Mysterious Points}
\author{Matthew J. Tyler }
\date{August 2026}

\begin{document}

\begin{abstract}
     Cluster varieties contain the union of cluster tori and the points not in this union are called \emph{deep points}, and the locus of these points is called the \emph{deep locus}. In \cite{CGSS}, a description of this locus is conjectured for locally acyclic cluster algebras, in particular, stating that this should be the stabilizer locus of the cluster automorphism group. 
    We resolve this conjecture for the case of cluster algebras arising from surfaces, introduced in \cite{FST1}, and the remaining finite type cases as categorized in \cite{FZ2}. In particular, we show 
        locally acyclic surface type cluster varieties have no deep points not contained in the stabilizer locus, and finite type cluster algebras also have no deep  points not contained in the stabilizer locus, validating the mysterious points conjecture in these cases.
\end{abstract}
\maketitle
\section{Introduction}
\par In recent years, the varieties arising from cluster algebras have been the topic of much study \cite{fock2006moduli, fock2009cluster,Muller2013LocallyAcyclic,Leclerc2016-mn, gross2018canonical,Serhiyenko2019-pz,LS,Casals2024-nv,CGSS,BM,Galashin2026-rn}.  The cluster variety associated to a cluster algebra $\A$ in the sense of \cite{FZ1} is the affine scheme $\X= \operatorname{Spec}(\A)$. The generators of $\A$ can be grouped into distinct finite collections referred to as seeds with $n$ generators in each seed (referred to as the rank of the cluster algebra). These generators are called the cluster variables. The Laurent phenomenon \cite{FZ1} guarantees that each element of the algebra can be expressed as a Laurent polynomial in the cluster variables of any seed $\vec x$, giving, for each seed, an open embedding
$$(\C^\times)^n\hookrightarrow \X.$$
This is called the \textit{cluster torus} associated to $\vec x$. The points of the scheme $\X$ which are not in the image of any cluster torus are called \textit{deep points}, and the collection of deep points is referred to as the \textit{deep locus} of $\X$.

\par In \cite{CGSS}, the authors put forth the conjecture that for locally acyclic cluster algebras\cite{Muller2013LocallyAcyclic}, this deep locus is identically the stabilizer locus of the \textit{cluster automorphism group}\cite{GSV,LS,CGSS}. Sometimes called the \textit{cluster dilation group}\cite{NS}, this is the group which acts on a cluster algebra by rescaling all of the cluster variables. Thus, it can be seen that if a point in the variety is stabilized by a non-identity element of this group, that point must be deep. The conjecture states that for locally acyclic cluster algebras, all deep points should have a non-trivial stabilizer under this action. Very recently, in \cite{NS} this conjecture has been shown to be false in general; however, it remains true in several cases \cite{CGSS,NS}. We expand these known cases to cluster algebras arising from surfaces, also known as surface type cluster algebras \cite{FST1}, and complete the work in \cite{CGSS} to all finite type cluster algebras\cite{FZ2}. In this paper, we show the following two results.
\begin{theorem}
    Surface type cluster algebras which are locally acyclic have no mysterious points.\label{thm}
\end{theorem}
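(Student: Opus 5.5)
We plan to reduce the statement to a small list of base cases via the standard reduction technique for locally acyclic cluster algebras, and then check the base cases directly.

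The reduction goes through Muller's theory of cluster localizations. If $\X$ is locally acyclic, it is covered by finitely many cluster localizations $\X[x_{i}^{-1}]$, each of which is again a cluster variety. A point $p$ on which some cluster variable of a seed is nonzero lies in the corresponding localization. Deep points of $\X$ that lie in such a localization are deep points of the localization, which is the cluster algebra of the freezing. The cluster automorphism group of the frozen algebra is a subgroup of that of the original, or at least it maps into it compatibly. So a non-trivial stabilizer in the localization yields a non-trivial stabilizer in $\X$. Following \cite{CGSS}, we first establish this compatibility carefully: a nontrivial dilation of the frozen algebra must extend to a nontrivial dilation of $\X$ fixing $p$. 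This is the step where the exchange matrix with frozen rows enters, and we would check it by comparing kernels of the extended exchange matrices.

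The remaining deep points are those on which some notion of ``all covering variables vanish'' holds. Our approach is to argue on the surface. For a triangulated marked surface $(\surf,M)$, freezing an arc corresponds to cutting $\surf$ along that arc. Local acyclicity of surface type algebras is known by Muller to hold exactly when there are at least two marked points on the boundary, or in the unpunctured cases. Cutting along arcs therefore reduces every locally acyclic surface to a disjoint union of pieces whose quivers are acyclic or isolated. Concretely, these pieces are polygons, i.e.\ type $A$, and once-punctured polygons, i.e.\ type $D$, together with annuli, i.e.\ affine $\tilde A$. We induct on the number of arcs. Take a deep point $p$. If some cluster variable $x$ of some seed has $x(p)\neq 0$, then $p$ lies in the cut surface, and induction applies. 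Otherwise every cluster variable vanishes at $p$. Exchange relations $x x' = M_1+M_2$ then force one monomial to vanish on each side, and we will show this is impossible unless there are frozen variables or coefficients. In that case the point is fixed by the full torus of dilations compatible with the frozen data, which is nontrivial.

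The base cases are acyclic quivers. For these, \cite{CGSS} (and the results assumed earlier) establish that deep points of acyclic cluster varieties lie in the stabilizer locus; we invoke this directly. The main obstacle we expect is the gluing step: ensuring that the dilation obtained on a cut piece extends to the whole surface. Cutting along an arc creates new boundary segments whose variables are frozen, so the dilation group of a piece could a priori be larger than the restriction from $\X$. To handle this, we would use the explicit description of the dilation group for surfaces. Its elements are weights on marked points, together with extra sign/puncture data; cf.\ the notation $\sgclaut$ and $\reclaut$. We would then verify that a weight assignment stabilizing $p$ on a piece can be chosen trivial on the cut arc's endpoints, whenever $p$'s value on that arc is nonzero.
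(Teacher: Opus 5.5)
The gap is the gluing step, on which your whole induction rests, and the principle you state for it is false. Freezing a vertex deletes its column from the exchange matrix, so the dilation group of the frozen algebra is \emph{larger}. Dilations of $\A$ restrict to a chart, but not conversely. In fact a locally acyclic algebra is covered by isolated cluster localizations. A deep point $p$ of an isolated chart is fixed there by $x_k\mapsto\zeta x_k$ for any $k$ with $p(x_k)=p(x_k')=0$. So if chart stabilizers always extended, no locally acyclic algebra could have mysterious points, contradicting \cite{NS}.

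The failure already occurs for surfaces. In the hexagon, let $p$ kill the six short diagonals, with value $-1$ on the long diagonals and $1$ on the boundary. After freezing $x_{14}$, negating only $x_{13}$ and $x_{24}$ is a dilation of the frozen algebra fixing $p$. It does not extend, because it makes $x_{14}x_{35}=x_{13}x_{45}+x_{15}x_{34}$ inhomogeneous. Your proposed remedy fails here as well. On the quadrilateral $1234$, the marked-point weights fixing the frozen arcs are $(a,a^{-1},a,a^{-1})$, so insisting on trivial weights at the endpoints $1,4$ of the cut leaves only the identity. Yet every nontrivial global stabilizer of $p$ comes from alternating weights $a^{\pm1}$ with $a^2\neq 1$ on all six vertices, so it is nontrivial at $1$ and $4$. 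What must actually hold is homogeneity of each cut arc's own exchange relation. That requires a stabilizer chosen coherently across all pieces, and your plan never specifies one.

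The paper supplies exactly such a coherent choice. On an unpunctured surface it negates precisely the arcs killed by $p$. By \cite[Lemma 7.1]{BM}, every triangle has one or three killed sides, so every quadrilateral has an even number of them. This is the mod-$2$ kernel condition, and the same parity count lets the sign reversal extend across cut arcs (Lemma~\ref{lem:ext}).

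Since that lemma is only available without punctures, punctures are treated first. If some puncture is $p$-essential, no nonvanishing arc can ever cut it open. The paper instead uses the horocycle rescaling at that puncture, which is a global cluster automorphism fixing $p$, so no gluing is needed. Otherwise it constructs mutually compatible nonvanishing arcs, made plain by a tag swap, joining every puncture to the boundary without disconnecting the surface. Cutting along them gives a connected unpunctured surface, and the sign reversal there extends back. None of these ingredients appears in your plan, nor does any treatment of notched arcs.

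Separately, the paper invokes \cite{CGSS} only for types A, D, E, which is why B, C, F, G are done by hand. So \emph{all acyclic base cases} cannot simply be quoted. Your recursion could avoid this, since a locally acyclic piece on which every cluster variable vanishes must be isolated.
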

In particular, if the marked surface includes into a disk or has at least two marked points in each connected component, the associated cluster algebra is locally acyclic \cite{Muller2013LocallyAcyclic}. It was shown in \cite{CGSS} that type A, D, and E cluster algebras have no mysterious points. We extend that with the following.
\begin{theorem}
    Cluster algebras of finite type have no mysterious points.\label{thm2}
\end{theorem}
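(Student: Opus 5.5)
Theorem~\ref{thm2} asserts that for every cluster algebra of finite type, each deep point of $\X=\operatorname{Spec}(\A)$ has a nontrivial stabilizer under the cluster automorphism group; equivalently, there are no mysterious points. Types A, D and E are already settled in \cite{CGSS}. By the Fomin--Zelevinsky classification \cite{FZ2}, what remains are the non-simply-laced types $B_n$, $C_n$, $F_4$ and $G_2$, together with arbitrary choices of frozen variables and coefficients compatible with finite type.

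The plan is to reduce the non-simply-laced cases to the simply-laced ones by folding. Each of $B_n, C_n, F_4, G_2$ arises from $A_{2n-1}$, $D_{n+1}$, $E_6$, $D_4$ via a quiver automorphism group $\Gamma$. This gives a surjection from the unfolded algebra $\widetilde{\A}$ onto $\A$ that identifies $\Gamma$-orbits of cluster variables. Dually, $\X$ embeds as a closed subscheme of $\widetilde{\X}$ lying in the $\Gamma$-fixed locus. The embedding should carry cluster tori of $\X$ to the intersection of $\Gamma$-invariant cluster tori of $\widetilde{\X}$ with $\X$. I would verify this using the fact that every cluster of the folded algebra lifts to a $\Gamma$-invariant cluster, by orbit mutations.

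The first key step is to show that a deep point of $\X$ maps to a deep point of $\widetilde{\X}$. If the image lay in some cluster torus $\widetilde T$ of $\widetilde{\X}$, one must produce a $\Gamma$-invariant cluster whose torus also contains it. This is the main obstacle, since a non-invariant cluster containing a $\Gamma$-fixed point does not immediately give an invariant one. The first approach is to average: apply $\Gamma$ to $\widetilde T$ to get tori $\gamma\widetilde T$, all containing the point because it is fixed. Then use the finite-type fact that the relevant cluster complexes are explicit. In particular, a point lies in the torus of a cluster exactly when all of that cluster's variables are nonzero there, so one can look for an invariant cluster among the compatible variables that are nonzero at the point. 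Failing that, I would argue case by case using the small explicit models: in $G_2$ and $F_4$, and in $B_n/C_n$ via polygons with a center-symmetric triangulation.

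Given that step, \cite{CGSS} supplies a nontrivial element $\tilde g$ of the automorphism torus of $\widetilde{\A}$ fixing the image point. The second key step is to descend $\tilde g$ to a nontrivial element of the automorphism group of $\A$. The automorphism torus of $\A$ should be the $\Gamma$-invariant part of that of $\widetilde{\A}$. From the stabilizer, which is a diagonalizable group, I would extract a $\Gamma$-invariant nontrivial element by taking the norm $\prod_\gamma \gamma\tilde g$, or a suitable root of it. The danger is that the norm could be trivial. To guard against this, I would use the explicit description in \cite{CGSS} of the stabilizers in types A, D and E, which are given by vanishing patterns of frozen or cluster variables. I would check directly that $\Gamma$-invariant vanishing patterns yield $\Gamma$-invariant cocharacters. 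Finally, frozen variables and coefficients are handled by the same argument, since folding is compatible with a $\Gamma$-equivariant choice of frozens.

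If the folding reduction fails for some coefficient choices, the fallback is a direct check via explicit presentations. Finite type algebras are acyclic, so one can use the lower-bound presentation of $\A$ by an acyclic seed, with exchange relations $x_ix_i'=P_i$. One then stratifies deep points by which $x_i$ and $x_i'$ vanish and exhibits a dilation fixing each stratum.
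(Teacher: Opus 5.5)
Your folding strategy is a reasonable alternative route, but as written it is a program rather than a proof. The two steps that carry the weight are left open, and the fallback omits the idea that makes a direct argument work.

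The first gap is Step~1: a deep point $p$ of $\X$ should have deep image $\tilde p$ in $\widetilde{\X}$. This is not a technical lemma.
\begin{itemize}
\item If $\tilde p$ lay in some cluster torus of $\widetilde{\X}$, its stabilizer among dilations of $\widetilde{\A}$ would be trivial, since a dilation fixing $\tilde p$ must scale every variable of that cluster by $1$.
\item Every dilation $t$ of $\A$ lifts to the $\Gamma$-invariant dilation $\tilde t_i=t_{[i]}$ of $\widetilde{\A}$. Indeed, $\prod_i\tilde t_i^{\,\tilde b_{ij}}=\prod_{[i]}t_{[i]}^{\,\bar b_{[i][j]}}$ with $\bar b_{[i][j]}=\sum_{i'\in[i]}\tilde b_{i'j}$. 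This lift fixes $\tilde p$ whenever $t$ fixes $p$, and it is nontrivial whenever $t$ is.
\end{itemize}
Hence a counterexample to Step~1 would be a mysterious point of $\X$. Step~1 follows from the theorem itself and carries essentially all of its content. Your averaging idea does not touch it: the tori $\gamma\widetilde T$ are distinct non-invariant clusters. ``Look for an invariant cluster among the nonvanishing compatible variables'' simply restates the claim.

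The second gap is the descent in Step~2, and the danger you flag is real. For $D_4\to G_2$ with no frozens, the dilation group of $\widetilde{\A}$ is the torus $\{t_c=1,\ t_{\ell_1}t_{\ell_2}t_{\ell_3}=1\}$. On it the triality norm $\prod_{\gamma\in\Gamma}\gamma(t)$ is identically trivial. The only nontrivial invariant elements are the $\mu_3$-scalings of the three leaves, which you would have to find by hand. More generally, a finite group can act on a nontrivial diagonalizable group with no nontrivial fixed points; for example, $\mathbb{Z}/2$ acting on $\mu_3$ by inversion. So nontriviality of $\mathrm{Stab}(\tilde p)^\Gamma$ needs an actual argument, and you defer it to a check against \cite{CGSS} that is never carried out.

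Finally, your fallback (``exhibit a dilation fixing each stratum'') cannot work as stated, because most vanishing strata contain no deep points. The missing idea is the one the paper uses:
\begin{itemize}
\item Reduce to really-full-rank frozens by Proposition~\ref{prop:RFR}.
\item Prove non-deepness by freezing a variable that is nonzero at $p$, which gives a cluster chart by \cite{Muller2013LocallyAcyclic}. Freezing splits the quiver into smaller finite types.
\item Induct, using Lemma~\ref{lem:not_deep_leaf} and the rank-two classification in Proposition~\ref{prop:rk2}.
\end{itemize}
The only survivors are $x_1=x_1'=0$ in type $B_n$ and the alternating patterns in type $C_n$. These are fixed by explicit $\pm1$ dilations with $B^Tw\equiv0\pmod 2$. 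The really-full-rank $F_4$ has no deep points at all.
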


\par The paper is organized as follows. In Section \ref{Sec:bkg}, we provide the background of the objects and proof techniques used in the paper. In Section \ref{sec:pf}, we discuss the case of a surface without punctures. In Section \ref{sec:ps} we discuss the case of a surface with punctures, proving Theorem \ref{thm}. Finally, in Section \ref{sec:p2}, we discuss the case of Rank 2 cluster algebras, and cluster algebras of types B,C,F, and G, thus completing Theorem \ref{thm2}.

\section{Background}
\label{Sec:bkg}
\subsection{Cluster Algebras}
Cluster algebras were introduced by Fomin and Zelevinsky in \cite{FZ1}. We will use the definition of a cluster algebra associated with a skew-symmetrizable matrix.
\begin{definition}
    A \emph{quiver} $Q$ is a directed graph $(V,E)$ with no directed 2-cycles, and each vertex $v\in V$ is labeled either mutable or frozen. By convention, we denote the first $n$ vertices as the mutable vertices, and the last $m$ as the frozen vertices. We also forbid edges between frozen vertices. Let $B_Q$ denote the adjacency matrix, and $\tilde{B}_Q$ be the matrix with the first $n$ columns of $B_Q$. We call $\tilde{B}_Q$ the \emph{exchange matrix}.
\end{definition}
Note that $B_Q$ is a well-defined skew-symmetric matrix and recovers the quiver as we forbade directed 2-cycles. Furthermore, as we have forbidden arrows between frozen vertices, $\tilde{B}_Q$ also recovers the quiver. As we consider the case of finite quivers, we often denote $V=[n+m]$. To quivers, we associate a mutation rule to each vertex.
\begin{definition}
    A \emph{mutation at $k\in [n]$} of $Q$ is a quiver $Q'$ obtained from $Q$ by the following steps.
    \begin{enumerate}
        \item Initialize $Q'$ as $Q$
        \item For each path $i\to k\to j$ in $Q$, add an arrow $i\to j$.
        \item Reverse the direction of each arrow incident to $k$.
        \item Prune directed 2-cycles and arrows between frozen vertices.
    \end{enumerate}
\end{definition}
We can define mutation for a general skew-symmetrizable matrix as follows.
\begin{definition}
Given $\tilde B = (b_{ij})$ an $(n+m)\times n$ extended skew-symmetrizable integer matrix, and $k\in[1,n]$, the matrix mutation $\tilde B'=\mu_k(\tilde B)$ is given by
$$b'_{ij}=\begin{cases}
    -b_{ij}&\text{if $i=k$ or $j=k$}
    \\
    b_{ij}+b_{ik}b_{kj}&\text{if $b_{ik}>0$ and $b_{kj}>0$}
    \\
    b_{ij}-b_{ik}b_{kj}&\text{if $b_{ik}<0$ and $b_{kj}<0$}
    \\
    b_{ij}&\text{otherwise.}
\end{cases}$$
\end{definition}
It can be verified that mutation at a vertex $k$ is an involution, and that matrix mutation at position $k$ is also an involution. We say $Q$ and $Q'$ are mutation equivalent if there is a finite sequence of mutations that takes $Q$ to $Q'$. Likewise, two skew-symmetrizable matrices $\tilde B$ and $\tilde B'$ are mutation equivalent if there exists a finite sequence of matrix mutations taking $\tilde B$ to $\tilde B'$.
\\
We can now note that the set of mutation equivalent quivers to $Q$ (or skew-symmetrizable matrices mutation equivalent to $\tilde B$) can be drawn in an $n$-regular tree $\mathbb{T}_n$. To each node, we record the matrix $\tilde B_Q$, and we assign a vector of independent variables $\vec x = (x_1,...,x_{n+m})$. We call $(\vec x,\tilde B_Q)$ a cluster seed, and $\vec x$ the cluster variables associated to that cluster. We also declare that if $Q$ and $Q'$ are connected via mutation at $k$ (or $\tilde B$ and $\tilde B'$), with $\vec x$ and $\vec x'$ as the associated cluster variables, we impose that for all $i\ne k$, $x_i=x_i'$ and the cluster relation:
$$x_k\cdot x_k'=\prod_{b_{ik}>0}x_i^{b_{ik}}+\prod_{-b_{ik}<0}x_i^{b_{ik}}$$
This puts all of the cluster variables in any mutation-equivalent seed as elements of $\C(x_1,...,x_{n+m})$ where the $\vec x$ here is the initial seed. We declare the cluster algebra $\A_Q$ or $\A(Q)$ (likewise $\A_B$) to be the sub algebra of $\C(x_1,...,x_{n+m})$ generated by all cluster variables in all seeds mutation equivalent to $Q$ (or matrix mutation equivalent to $\tilde B$) along with $x_{n+1}^{-1},...,x_{n+m}^{-1}$ (the inverses of the frozen variables). 


\par We also define the \emph{cluster variety} $\X_Q$ as $\operatorname{Spec}(\A_Q)$, the closed points of which are well known to be in correspondence with elements of $\operatorname{Hom}_{\text{Ring}}(\A_Q,\C)$. The main object of study for this paper will be the so-called deep points of this variety.
\begin{definition}
    A point $\pt\in\X_Q$ is called \emph{deep} if, for all clusters $(\vec x, \tilde B_Q)$, $\prod_i \pt(x_i)=0$, i.e. $\pt$ vanishes on at least one cluster variable in each cluster. \label{def:deep}
\end{definition}
The deep points are exactly the points that are not in the cluster torus (the torus $(\C^\times)^{n+m}$ in $\X_Q$ given by $x_i\mapsto\zeta_i$ for some cluster $(\vec x, B)$) for any cluster, and thus the deep locus (the set of all deep points) is exactly those points in the variety outside the union of cluster tori.

\begin{definition}
    A cluster variety is called \emph{locally acyclic} if the variety is covered by finitely many acyclic cluster charts.
\end{definition}
While the details of this definition are not important for this discussion, in the locally acyclic case we have the following result.
\begin{proposition}
    [\cite{Muller2013LocallyAcyclic}, Lemma 3.4] If $\A$ is a locally acyclic cluster algebra, and $\A'$ is a cluster algebra obtained by freezing some vertex $v\in Q$, then $\A'=\A[x_{v}^{-1}]$.
\end{proposition}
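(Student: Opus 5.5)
This is Muller's Lemma 3.4. Write $\A'$ for the cluster algebra of the quiver $Q'$ obtained by declaring $v$ frozen. The plan is to prove the two inclusions $\A[x_v^{-1}]\subseteq\A'$ and $\A'\subseteq\A[x_v^{-1}]$ inside the common ambient field $\C(x_1,\dots,x_{n+m})$.

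\textbf{The inclusion $\A[x_v^{-1}]\subseteq\A'$.} This direction is formal and should not need local acyclicity.
\begin{enumerate}
\item Every seed of $Q'$ arises from a mutation sequence avoiding $v$. Applying the same sequence to $Q$ gives a seed of $\A$ with the same cluster variables and the same exchange relations at mutable vertices other than $v$. Hence every cluster variable of $\A'$ is a cluster variable of $\A$.
\item In $\A'$ the variable $x_v$ is frozen, so $x_v^{-1}\in\A'$.
\item Combining (1) and (2) gives $\A'\subseteq\A[x_v^{-1}]$, which is actually the other inclusion.
\end{enumerate}

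\textbf{The inclusion $\A[x_v^{-1}]\subseteq\A'$.} This is the substantive one. I would argue that every element of $\A[x_v^{-1}]$ lies in $\A'$.

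For this I would use the upper cluster algebra. By the Laurent phenomenon, $\A'\subseteq\mathcal U(\A')$, and $\mathcal U(\A')$ is the intersection of the Laurent rings over all clusters of $Q'$ in which $x_v$ is inverted.

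The key input from local acyclicity, following Muller, is that locally acyclic cluster algebras satisfy $\A=\mathcal U(\A)$. Moreover, local acyclicity is preserved by freezing, essentially by definition, since the acyclic covers restrict. So $\A'=\mathcal U(\A')$ as well.

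It then suffices to show $\A[x_v^{-1}]\subseteq\mathcal U(\A')$.
\begin{enumerate}
\item Let $f\in\A$. Then $f$ is a Laurent polynomial in every cluster of $\A$.
\item In particular $f$ is Laurent in every cluster of $Q'$, since those are clusters of $\A$ by step (1) above.
\item Inverting $x_v$ keeps $f$ Laurent, because $x_v$ lies in every cluster of $Q'$.
\end{enumerate}
Thus $\A[x_v^{-1}]\subseteq\mathcal U(\A')=\A'$, which gives equality.

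\textbf{Main obstacle.} The hard part is establishing $\A'=\mathcal U(\A')$ for the frozen algebra. I would first reduce to the acyclic case using the covering by isolated/acyclic charts: Spec of $\A$ is covered by localizations $\A[x_{v_i}^{-1}]$, and on each piece one inducts on the number of mutable vertices.

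The base case is acyclic seeds, where $\A=\mathcal U$ by Berenstein–Fomin–Zelevinsky. Gluing then uses that both sides are sheaves on the cover, so equality can be checked locally.

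Since the paper cites this result directly, in context I would simply invoke Muller's argument rather than reprove the Berenstein–Fomin–Zelevinsky step.
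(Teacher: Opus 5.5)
Your inclusions are the right ones. The paper gives no proof and only cites Muller, and the argument behind that result is your chain
\[
\A'\subseteq\A[x_v^{-1}]\subseteq\mathcal U(\A)[x_v^{-1}]\subseteq\mathcal U(\A').
\]
Everything collapses to equality as soon as $\A'=\mathcal U(\A')$. Your first block is mislabeled, but its content, $\A'\subseteq\A[x_v^{-1}]$, is correct. Note also that the chain never uses $\A=\mathcal U(\A)$. The only substantive input is a statement about the \emph{frozen} algebra $\A'$.

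That input is where your proof has a genuine gap. Local acyclicity is not preserved by freezing ``essentially by definition'':
\begin{itemize}
\item A cover of $\operatorname{Spec}\A$ consists of cluster localizations $\A_i=\A[x_{S_i}^{-1}]$, obtained by freezing sets $S_i$ in other seeds.
\item Nothing forces $x_v$ to be compatible with the variables $x_{S_i}$. When it is not, $x_v$ is not a cluster variable of $\A_i$. Then $\A_i[x_v^{-1}]$ is not a freezing of $\A_i$, and there is no reason for it to be a cluster localization of $\A'$.
\item When $x_v$ is compatible, knowing that the freezing of $\A'$ at $S_i$ equals $\A'[x_{S_i}^{-1}]$ is again an instance of the very statement you are proving.
\end{itemize}
So the covers do not ``restrict''. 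Your main-obstacle sketch (induct on the cover, then glue because both sides are sheaves) therefore has no cover of $\operatorname{Spec}\A'$ to glue over. You do not yet know that $\operatorname{Spec}\A'$ is the open set $\{x_v\neq 0\}$ of $\operatorname{Spec}\A$; that is the conclusion, so the argument is circular. A smaller point: Berenstein--Fomin--Zelevinsky give $\A=\mathcal U$ for acyclic seeds only under a coprimality (e.g.\ full rank) hypothesis. The unconditional acyclic case is a later result of Muller.

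The repair is to make $\A'=\mathcal U(\A')$ a hypothesis, for instance by assuming $\A'$ is itself (locally) acyclic. This is how the paper actually uses the result: Lemma \ref{lem:fixed_sub} explicitly assumes both $\A$ and $\A'$ are locally acyclic. With that hypothesis your chain finishes the proof in one line. Without it, you need a real argument that freezing a locally acyclic cluster algebra preserves $\A=\mathcal U$, and the proposal does not supply one.
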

This proposition allows us to identify charts in $\X_Q$ by freezings of vertices in $Q$.
\subsection{Cluster Algebras from Surfaces}
\par
Let $\surf$ be an orientable, triangulable surface with boundary $\partial\surf$ together with a collection $\puns$ of punctures $\pun$ and a collection $\bns$ of marked points $\bn$ along the boundary, $\partial \Sigma$. We call $(\surf,\puns,\bns)$ a \emph{marked surface}. We will assume $\surf$ is connected, but we will see in Corollary \ref{cor:multi} that this assumption is not necessary.
\par Like in \cite{Muller2013LocallyAcyclic}, we will exclude some small examples. We will assume $\puns\cup\bns$ is non-empty, each component of $\partial\surf$ contains a point $\bn\in\bns$, and $(\surf,\puns,\bns)$ is not a sphere with at most three punctures, a disc with $|\bns|=1$, $|\puns|\le1$, or a disc with $|\bns|=2$ and $|\puns|=0$.

\par We call a curve $\gamma:[0,1]\hookrightarrow\surf$ a \emph{tagged arc} if $\partial\gamma\subset\puns\cup\bns$, each endpoint of $\gamma$ in $\puns$ is denoted either \emph{plain} or \emph{notched} (the endpoints of $\gamma$ in $\bns$ are taken to be plain by convention), $\gamma$ is a proper inclusion away from the endpoints, if $\partial\gamma$ is one point, then the taggings at each end agree, the interior of $\gamma$ does not intersect $\puns\cup\bns$, and $\gamma$ does not cut out either a disc with one boundary marked point and at most one puncture or a disc with two boundary marked points and no punctures.

\par We consider tagged arcs up to endpoint-fixed homotopy, where fixing the endpoints also means fixing the taggings at the endpoints.

\begin{definition}
    Two arcs $\gamma_1, \gamma_2$ are called \emph{compatible} if one of the following.
    \begin{enumerate}
        \item $\gamma_1$ does not intersect $\gamma_2$.
        \item $\gamma_1$ and $\gamma_2$ intersect at one point, and the taggings of $\gamma_1$ and $\gamma_2$ match at that point.
        \item $\gamma_1$ and $\gamma_2$ are homotopic via an endpoint-fixing homotopy, and the tagging agrees at one endpoint and disagrees at the other.
    \end{enumerate}   
    
    A collection of arcs is called \emph{mutually compatible} if every two arcs in the collection are compatible.
\end{definition}

\begin{definition}
    A \emph{tagged triangulation} $\Delta$ of $(\surf, \puns,\bns)$ is a maximal collection of mutually compatible arcs.
\end{definition}
\begin{definition}
     Relative to a triangulation $\Delta$, we construct the \emph{associated quiver} $Q_\Delta$ as follows.
\begin{itemize}
    \item The mutable vertices in $Q_\Delta$ correspond to tagged arcs $\gamma\in \Delta$.
    \item The frozen vertices in $Q_\Delta$ correspond to the connected components $f$ of $\partial \surf\smallsetminus\bns$.
    \item For each marked point $\pun\in\puns\cup\bns$ as long as more than two curves in $\Delta\cup\{f\}$, for $\gamma_1,\gamma_2$ curves incident to $\pun$, we add a directed arrow $\gamma_1\to\gamma_2$ if $\gamma_2$ follows $\gamma_1$ in cyclic order around $\pun$ (determined by the orientation of $\surf$). For this step, if $\gamma_a$ and $\gamma_b$ are homotopic but differ in taggings, we treat them as the same curve for the purpose of this step.
\end{itemize}
\label{def:quiv}
\end{definition}

\begin{lemma}
    [\cite{FST1}, Proposition 4.10, \cite{Muller2013LocallyAcyclic}, Lemma 9.8] If $\Delta$ and $\Delta'$ are two tagged triangulations of $(\surf,\puns,\bns)$, then there is a canonical isomorphism of cluster algebras
    $$\A(Q_\Delta)\simeq\A(Q_{\Delta'}).$$
\end{lemma}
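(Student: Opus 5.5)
The lemma cited from \cite{FST1} and \cite{Muller2013LocallyAcyclic} is an imported statement, so the natural plan is to reconstruct its proof. The core point is that any two tagged triangulations are connected by a finite sequence of flips. At each flip, the quiver changes by mutation at the corresponding vertex. The cluster algebra depends only on the mutation class of a seed, so a canonical isomorphism follows from this.

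The first step is connectivity of the tagged flip graph. For ordinary (untagged) ideal triangulations, any two are related by a finite sequence of flips; this is classical (Hatcher, Mosher), via the contractibility or connectivity of the arc complex. Every arc in a tagged triangulation $\Delta$ that is not a boundary-type self-folded configuration can be flipped uniquely: removing it leaves exactly one other compatible completion. This uniqueness is the statement that the tagged arc complex is a pseudomanifold. To connect tagged triangulations, the plan is to show that every tagged triangulation is flip-connected to one with all taggings plain at every puncture, or all notched at a given puncture. The all-plain ones correspond to ideal triangulations, which reduces the problem to the classical case. At a puncture where all ends are notched, the plan is to change tagging by flipping around a self-folded triangle: replace the notched arc by the plain loop configuration, which is the pair of arcs differing only in tagging (compatibility case (3)). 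Our excluded small surfaces are exactly the cases where this fails or the complex is degenerate. This is why they were excluded; the once-punctured closed surfaces have two components, but they are already removed by the condition that $\puns\cup\bns$ be non-empty together with the boundary hypotheses.

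The second step is to verify that if $\Delta'$ is obtained from $\Delta$ by flipping $\gamma$, then $Q_{\Delta'}=\mu_\gamma(Q_\Delta)$. The plan is a local check in the quadrilateral (possibly degenerate) containing $\gamma$. Arrows given by Definition \ref{def:quiv} come from consecutive arcs at the marked points. The quadrilateral's sides and diagonal produce precisely the arrows $i\to\gamma\to j$. After the flip, these become reversed, composite arrows $i\to j$ appear, and they cancel against existing opposite arrows from the adjacent triangles, exactly as in mutation. The rule treating homotopic arcs with different taggings as one curve handles the self-folded cases.

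Finally, a flip sequence gives mutation-equivalent seeds, so all cluster variables of one seed pattern are those of the other. This gives an identification of $\A(Q_\Delta)$ and $\A(Q_{\Delta'})$ as subalgebras of the same fraction field, sending arc variables to arc variables. Canonicity (independence of the chosen flip path) follows because cluster variables are indexed by tagged arcs, via Fomin--Shapiro--Thurston's bijection; alternatively, one can observe that the identification is intrinsic on cluster variables. I expect the main obstacle to be the connectivity of tagged triangulations near punctures, handling notched and self-folded configurations. The mutation compatibility check is routine once the degenerate cases are enumerated.
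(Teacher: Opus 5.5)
Your outline is the standard argument behind the results the paper cites; the paper gives no proof of its own. The ingredients are flip-connectivity of tagged triangulations, the local check that a flip is a mutation, and identification of the two algebras along a flip path. But one step fails as written.

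You say once-punctured closed surfaces are ``already removed'' by the hypotheses. They are not. A closed surface of genus $g\ge 1$ with $|\puns|=1$ has $\puns\cup\bns\neq\emptyset$, no boundary, and is neither a sphere nor a disc, so it meets every standing assumption. The paper uses this case explicitly: see the caveat in Lemma \ref{lem:clvars} about $|\puns|=1$, $\partial\surf=\emptyset$, and the once-punctured torus in Figure \ref{fig: folded_rects}. For these surfaces the tagged flip graph has two components, the all-plain and the all-notched triangulations. So if $\Delta$ is all-plain and $\Delta'$ is all-notched, no flip sequence joins them, and your first step gives nothing.

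The repair is short. Changing every tag at the unique puncture (the map $\taginv$ the paper uses later) is a bijection between the two components. It does not change the quiver, because Definition \ref{def:quiv} ignores taggings. Hence $Q_{\Delta'}=Q_{\Delta''}$, where $\Delta''$ is $\Delta'$ with its tags changed, and $\Delta''$ lies in the component of $\Delta$. Compose your flip-path identification with this equality. This is also why only plain arcs appear as cluster variables in that case.

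Two smaller points:
\begin{itemize}
\item Every arc of a tagged triangulation can be flipped, since the tagged arc complex is a pseudomanifold. Unflippable radii of self-folded triangles occur only for ideal triangulations, which is exactly what tagging removes. So your exception for self-folded configurations is unnecessary.
\item Path-independence, which is what ``canonical'' needs, is precisely the Fomin--Shapiro--Thurston identification of the cluster complex with the tagged arc complex. Citing that is fine, as long as it is stated as the input. Your alternative, that ``the identification is intrinsic on cluster variables,'' is not an argument.
\end{itemize}
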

We define the \emph{cluster algebra} associated to $(\surf,\puns,\bns)$ by
$$\A_{\surf,\puns,\bns}:=\A(Q_{\Delta}),$$
where $\Delta$ is any triangulation of $(\surf,\puns,\bns)$. The algebra is well defined by the above lemma. Going forward, we will denote the triple $(\surf,\puns,\bns)$ by $\surf$, and thus this algebra will be denoted $\A_\surf$.
\begin{lemma}
    [\cite{FST1} Theorems 5.6 and 7.11, \cite{FominShapiroThurston2013SurfacesII} Theorem 6.1] If $(\surf,\puns,\bns)$ is as above, then the cluster variables in $\A_\surf$ are exactly the tagged arcs of $(\surf,\puns,\bns)$. (If $|\puns|=0$, or $|\puns|=1$ and $\partial\surf$ is empty, we have the additional restriction that all arcs must be plain arcs).\label{lem:clvars}
\end{lemma}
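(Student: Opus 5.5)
The plan is to identify the combinatorics of the cluster algebra $\A_\surf$ with the combinatorics of tagged triangulations, and then realize cluster variables as concrete functions attached to arcs.

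First, I will show that for each tagged arc $\gamma$ in a tagged triangulation $\Delta$ there is a unique tagged arc $\gamma'\neq\gamma$ with $\Delta'=(\Delta\smallsetminus\{\gamma\})\cup\{\gamma'\}$ again a tagged triangulation (the \emph{flip}). I will then check, by a local case analysis of the quadrilateral or degenerate configuration containing $\gamma$, that $Q_{\Delta'}=\mu_\gamma(Q_\Delta)$ in the sense of Definition \ref{def:quiv}. The degenerate configurations are self-folded triangles, and pairs of arcs differing only in tagging at a puncture. In these cases I will use the convention of Definition \ref{def:quiv} that homotopic arcs with different taggings are treated as a single curve, and then verify the arrows directly.

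Second, I will prove that the flip graph of tagged triangulations is connected, and that every tagged arc extends to a tagged triangulation. Connectivity will be reduced to the classical connectivity of ideal triangulations under flips (Hatcher, Mosher) by "untagging" at each puncture. The excluded cases explain the restriction in the statement. If $|\puns|=0$ there are no notched ends at all. If $\surf$ is closed with a single puncture, the tagged triangulations split into two components, all plain and all notched, and only the plain one is reachable from a plain seed. Together with the first step, this gives a surjection from tagged arcs, or plain arcs in the special cases, onto cluster variables. It sends triangulations to clusters and flips to mutations.

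Third, and this is the main obstacle, I will show that the surjection is injective. Otherwise the cluster complex could be a proper quotient of the tagged arc complex. I would first try Penner's lambda lengths on decorated Teichmüller space. To each plain arc I assign the lambda length of the geodesic between horocycles. I must then check that the exchange relation for a flip is exactly the Ptolemy relation, including boundary segments as frozen variables. Notched ends are handled by replacing the horocycle at the puncture by its conjugate horocycle, as in \cite{FominShapiroThurston2013SurfacesII}. Given this, the initial cluster variables become algebraically independent coordinates, and every cluster variable becomes the lambda length of the corresponding arc. Injectivity then reduces to showing that distinct tagged arcs have distinct lambda-length functions. I would prove this by a degeneration argument in Teichmüller space, sending the length of one arc to infinity while keeping the others bounded. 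Alternatively, I would compare denominator vectors, which are read off from intersection numbers with the initial triangulation.
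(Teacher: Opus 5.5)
The paper gives no proof of this lemma; it quotes \cite{FST1} and \cite{FominShapiroThurston2013SurfacesII}. Your outline follows the route of those sources: flips realize mutations, the tagged flip graph is connected (with the closed once-punctured exception), and a lambda-length realization uses conjugate horocycles at notched ends. So the overall strategy is right.

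Your second step, however, does not deliver what you claim. Connectivity and the local identity $Q_{\Delta'}=\mu_\gamma(Q_\Delta)$ show only two things: each cluster variable sits at \emph{some} arc at the end of \emph{some} flip path, and each arc carries \emph{some} cluster variable. They do not give a map from arcs to cluster variables. For that you need path-independence: two flip sequences from $\Delta_0$ that both end at triangulations containing $\gamma$ must produce the same variable at $\gamma$. Equivalently, the map from the $n$-regular tree to seeds must factor through the tagged exchange graph. This cannot be read off from local quiver compatibility. It needs either a statement that cycles in the tagged flip graph are generated by $4$- and $5$-cycles (which close up in every exchange pattern), or an explicit realization. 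Your third step supplies the realization. Once the initial variables are Penner coordinates and every exchange relation is a Ptolemy relation, induction along the tree shows that the seed at each vertex is the lambda lengths and quiver of the corresponding tagged triangulation. That is manifestly path-independent. So the argument closes, but the surjection must be derived after the realization, not before it.

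For injectivity, distinct compatible arcs are separated immediately: they lie in a common tagged triangulation whose lambda lengths are independent coordinates. The real work is for incompatible pairs, and there your degeneration must compare growth rates, not boundedness. Take the annulus with one marked point on each boundary component. Its exchange relations read $x_{k-1}x_{k+1}=x_k^2+c$, with $c$ the product of the two boundary variables. Letting $x_1\to\infty$ with $x_2$ and $c$ fixed gives $x_5\sim c\,x_1/x_2^2$, so another arc's lambda length blows up at the same rate. The denominator alternative would need two further ingredients, neither automatic: the denominator theorem for tagged arcs (with modified intersection numbers at punctures), and a proof that these intersection vectors separate tagged arcs.
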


\begin{definition}
    The \emph{cluster variety} associated to a marked surface $\surf$ is  $\X_\surf:=\operatorname{Spec}(\A_\surf)$.
\end{definition}

\begin{definition}
    Restating Definition \ref{def:deep} for surface cluster algebras, \label{deep} A point $\pt\in\X_\surf$ is called deep if for any triangulation $\Delta$ of $\surf$ there exists some arc $\gamma\in \Delta$ with $\pt(\gamma)=0$ (Due to Lemma \ref{lem:clvars}, the functions on $\A_\surf$ are also functions on tagged arcs in $\Delta$, so we take the notation $\gamma=x_\gamma\in \A_\surf$).
\end{definition}

\begin{definition}
    We call a puncture $\pun\in\surf$ $\pt$-\textbf{essential} if for all tagged arcs $\gamma$ on $\surf$ with $\pun\in\partial \gamma$, $\pt(\gamma)=0$.
\end{definition}
Importantly, if $\pun\in\surf$ is not $\pt$-essential, then there exists some (possibly notched) arc $\gamma$ with $\pt(\gamma)\ne 0$.
\subsection{Cluster Automorphisms}
One of the major tools for this paper will be the categorization of when the torus action on a cluster extends to a morphism of the cluster algebra. \cite{LS} denoted such automorphisms as Cluster Automorphisms \footnote{not to be confused with another morphism with the same name defined in \cite{ASS12}}.

\begin{definition}
    Given a cluster algebra $\A$, a \emph{Cluster Automorphism} is an algebra morphism $\claut:\A\to\A$ such that for each cluster seed $(\vec x, B)$, for each $x_i\in \vec x$, there is some $\zeta\in\C^\times$ such that $\claut(x_i)=\zeta x_i$. I.e. a map is a cluster automorphism if it is an algebra automorphism that rescales each cluster variable.
\end{definition}
We will use the following tool to prove a map is a cluster automorphism, namely, we will start with a toric rescaling on a seed and ask when it extends to a cluster automorphism on the whole cluster algebra.

\begin{proposition}
    Given $\zeta\in\C^\times$, and $\vec w\in \Z^
    {n+m}$, and a cluster seed $(\vec x,B)$ for a cluster algebra $\A$, the toric rescaling $\claut: x_i\mapsto \zeta^{w_i}\cdot x_i$ extends uniquely to a cluster automorphism on $\A$ if 
    \begin{equation}
        B^Tw=0\label{eqn:Bwzero}
    \end{equation}
    Moreover, if $\zeta$ is a $p$th root of unity, $\claut: x_i\mapsto \zeta^{w_i}\cdot x_i$ extends uniquely to a cluster automorphism on $\A$ if
    \begin{equation}
        B^Tw\equiv0\text{ mod }p
        \label{eqn:Bwp}
    \end{equation}
    \label{prop: claut_extension}
\end{proposition}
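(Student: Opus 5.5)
The plan is to realize $\claut$ first as an automorphism of the ambient field $\C(x_1,\dots,x_{n+m})$ and then propagate along the exchange tree $\mathbb{T}_n$. The map $x_i\mapsto \zeta^{w_i}x_i$ is a $\C$-algebra automorphism of the field, with inverse given by $-\vec w$. It is unique among algebra maps with the prescribed values, because the initial cluster generates the field. It therefore suffices to show that this field automorphism sends every cluster variable of every seed to a nonzero scalar multiple of itself. Granting this, $\claut$ maps the generating set of $\A$ (all cluster variables together with $x_{n+1}^{-1},\dots,x_{n+m}^{-1}$) into $\C^\times$-multiples of that same set. Hence $\claut(\A)\subseteq\A$, and the same holds for $\claut^{-1}$, which is the rescaling by $-\vec w$. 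So $\claut$ restricts to a cluster automorphism of $\A$.

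I will prove the following stronger statement by induction on the distance in $\mathbb{T}_n$ from the initial seed. To every seed $(\vec x',\tilde B')$ one can attach an integer weight vector $\vec w'$ such that $\claut(x'_i)=\zeta^{w'_i}x'_i$ for all $i$, and $\tilde B'^T\vec w'=0$ (respectively $\equiv 0 \bmod p$). The base case is the hypothesis.

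For the inductive step, mutate at $k$. The exchange relation is $x_kx_k'=M_++M_-$ with $M_+=\prod_{b_{ik}>0}x_i^{b_{ik}}$ and $M_-=\prod_{b_{ik}<0}x_i^{-b_{ik}}$. Under $\claut$, $M_\pm$ is multiplied by $\zeta^{d_\pm}$, where $d_+=\sum_i[b_{ik}]_+w_i$ and $d_-=\sum_i[-b_{ik}]_+w_i$. Their difference is $d_+-d_-=\sum_i b_{ik}w_i=(\tilde B^T\vec w)_k$. This vanishes under \eqref{eqn:Bwzero}, and is divisible by $p$ under \eqref{eqn:Bwp}; in either case $\zeta^{d_+}=\zeta^{d_-}$. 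Hence the exchange binomial is homogeneous, $\claut(M_++M_-)=\zeta^{d_+}(M_++M_-)$, and
$$\claut(x_k')=\zeta^{d_+-w_k}x_k'.$$
So I set $w'_k=-w_k+\sum_i[b_{ik}]_+w_i$ and $w'_j=w_j$ for $j\neq k$.

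The step needing care is checking that the new pair still satisfies the condition, i.e.\ $\tilde B'^T\vec w'=0$ (or $\equiv 0\bmod p$). For this I will use the Berenstein--Fomin--Zelevinsky description of matrix mutation, $\tilde B'=E_\varepsilon\tilde B F_\varepsilon$. Here $E_\varepsilon$ and $F_\varepsilon$ are the integer involutions that differ from the identity only in column $k$, respectively row $k$, and the identity holds for either sign $\varepsilon$. With $\varepsilon=-$, the entries of column $k$ of $E_-$ are $-1$ on the diagonal and $[b_{ik}]_+$ elsewhere, so the rule above is exactly $\vec w'=E_-^T\vec w$. Then
$$\tilde B'^T\vec w'=F_-^T\tilde B^TE_-^TE_-^T\vec w=F_-^T\tilde B^T\vec w,$$
since $E_-^2=I$. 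This is $0$ (resp.\ $\equiv0\bmod p$, since $F_-$ is an integer matrix), completing the induction.

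I expect the main obstacle to be pinning down the precise form of $E_\varepsilon$ and $F_\varepsilon$ and matching it with the weight update, including sign conventions and the frozen rows. If that bookkeeping proves awkward, the fallback is to verify $\sum_i b'_{ij}w'_i=\sum_i b_{ij}w_i$ directly from the case definition of $\mu_k$, splitting into $j=k$ and $j\ne k$ and using $b_{kj}$ together with $(\tilde B^T\vec w)_k$. Since every seed is reached from the initial one by a finite mutation sequence, the induction covers all cluster variables, and the proposition follows.
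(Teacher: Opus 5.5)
Your proof is correct, but it does not follow the paper's route. The paper gives no argument of its own here; it simply invokes the characterization of cluster automorphisms in \cite{LS} (Proposition 5.1) and \cite{GSV} (Lemma 2.3). You instead prove the needed direction from first principles, in two steps. First, the condition $(\tilde B^T\vec w)_k\equiv 0$ is exactly what makes the exchange binomial at $k$ homogeneous under the rescaling, which forces $\claut(x_k')=\zeta^{w'_k}x_k'$ with $\vec w'=E_-^T\vec w$. Second, the factorization $\tilde B'=E_-\tilde B F_-$ with $E_-^2=I$ shows the condition propagates, since $\tilde B'^T\vec w'=F_-^T\tilde B^T\vec w$. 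The bookkeeping you flagged does check out, frozen rows included. Directly from the case definition of $\mu_k$ one finds $(\tilde B'^T\vec w')_k=-(\tilde B^T\vec w)_k$, and for $j\neq k$ one finds $(\tilde B'^T\vec w')_j=(\tilde B^T\vec w)_j+[-b_{kj}]_+(\tilde B^T\vec w)_k$. This is precisely $F_-^T\tilde B^T\vec w$. Your route buys a self-contained proof that treats the exact and root-of-unity cases uniformly. In fact it works verbatim for any $t\in(\C^\times)^{n+m}$ with $\prod_i t_i^{b_{ik}}=1$ for all $k$. The citation buys brevity and the full characterization, including necessity of the condition, which the paper never uses. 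For uniqueness, your remark that the initial cluster generates the field suffices once you note one more step. A cluster automorphism is injective, so it extends to $\operatorname{Frac}(\A)=\C(x_1,\dots,x_{n+m})$ and is determined by its values on $x_1,\dots,x_{n+m}$.
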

\begin{proof}
    This statement follows directly from the characterization of cluster automorphisms, Proposition 5.1 in \cite{LS} and Lemma 2.3 in \cite{GSV}.
\end{proof}

We will use the following proposition to ensure that we do not have to worry much about the frozen variables in the finite type case. The frozen variables we will take in the surface case will be the boundary arcs.

\begin{definition}
    An exchange matrix $B$ is really full rank if its row vectors generate the standard $\Z^n$ lattice.
\end{definition}
\begin{proposition}\cite[Corollary 3.19]{CGSS}
    If $\A_B$ is a really full rank cluster algebra over $B$ with no mysterious points in its cluster variety, then for every matrix $B'$ with an identical mutable part to $B$, $\X_{B'}$ also has no mysterious points.\label{prop:RFR}
\end{proposition}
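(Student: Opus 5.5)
The plan is to compare $\X_{B'}$ with $\X_B$ through an auxiliary exchange matrix $B''$. This matrix has the same mutable part as $B$ and $B'$, and its frozen rows are the frozen rows of $B$ together with those of $B'$. We then move deep points and stabilizing cluster automorphisms back and forth along two maps of cluster varieties.

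\textbf{Step 1 (splitting off a torus).} Since $B$ is really full rank, each frozen row $r$ of $B'$ is an integer combination $r=\sum_i c^{(r)}_i b_i$ of the rows $b_i$ of $B$. We use these coefficients to define a monomial change of variables on the initial seed of $B''$. It leaves the frozen variables of $B'$ unchanged, and it replaces each $B$-variable $x_i$ by $x_i\prod_r y_r^{-c^{(r)}_i}$, where $y_r$ is the frozen variable of $B'$ attached to the row $r$. The choice of the $c^{(r)}$ makes every exchange binomial of $B''$ become the corresponding binomial of $B$, so the frozen variables $y_r$ decouple from all exchange relations. Mutating along any sequence, and using Proposition \ref{prop: claut_extension}-type arguments for the induced torus action, this gives an isomorphism
$$\A_{B''}\;\cong\;\A_B\otimes\C[y_r^{\pm1}].$$
Under it each cluster variable of $B''$ is a cluster variable of $B$ times a Laurent monomial in the $y_r$. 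Hence $\X_{B''}\cong\X_B\times(\C^\times)^{m'}$, this identification matches cluster tori with cluster tori, and a point $(p,t)$ is deep exactly when $p$ is deep in $\X_B$.

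\textbf{Step 2 (embedding $\X_{B'}$).} Next we specialize the frozen variables of $B$ to $1$ inside $B''$. Every cluster variable of $B''$ is a Laurent polynomial in the initial cluster variables whose coefficients are polynomials in the frozen variables. Each exchange relation of $B''$ specializes to the exchange relation of $B'$, because the frozen variables of $B$ only contribute monomials, which become $1$. So substitution defines a surjective algebra map $\A_{B''}\to\A_{B'}$ sending clusters to clusters, and therefore a closed embedding $\iota:\X_{B'}\hookrightarrow\X_{B''}$. Because cluster variables correspond, a deep point $p'\in\X_{B'}$ maps to a deep point of $\X_{B''}$. By Step 1 this image is $(p,t)$ with $p$ deep in $\X_B$.

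\textbf{Step 3 (transporting the stabilizer).} This is the main obstacle. Since $\X_B$ has no mysterious points, $p$ is fixed by a nontrivial cluster automorphism of $\X_B$. By Proposition \ref{prop: claut_extension}, this automorphism is given by a weight vector $w$ and a scalar $\zeta$ with $B^Tw=0$, or $B^Tw\equiv 0 \bmod p$ when $\zeta$ is a $p$th root of unity. Extend it by weight $0$ on the $y_r$; this gives a cluster automorphism of $\X_{B''}$ fixing $(p,t)$. We must now check two things. First, it preserves $\iota(\X_{B'})$, i.e.\ it gives weight $0$ to the frozen variables of $B$ in the coordinates of $B''$. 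Second, it restricts to a nontrivial cluster automorphism of $\X_{B'}$.

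Rather than work with the weight vector $w$ directly, we would try to exploit the freedom in the frozen weights. Use $B^Tw=0$ to rewrite the weight condition for $B'$ as the requirement that $\sum_r w'_r r$ equal the contribution of the mutable weights. Then solve for $w'_r$, using that the $r$ are combinations of the rows $b_i$ and that $w$ already kills $B^T$. Nontriviality should follow because the mutable weights are unchanged and cannot all vanish: if they did, the automorphism of $\X_B$ would act only on frozen coordinates, contradicting that it fixes $p$, since frozen coordinates are invertible. If the congruence version mod $p$ causes trouble, we would first treat the $\C^\times$-weight case and then reduce the root-of-unity case to it by the same lattice argument modulo $p$.
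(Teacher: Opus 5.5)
Your Steps 1 and 2 are sound. For really full rank $B$, the splitting $\X_{B''}\cong\X_B\times(\C^\times)^{m'}$ and the closed embedding $\iota\colon\X_{B'}\hookrightarrow\X_{B''}$ (specializing the frozen variables of $B$ to $1$) are standard. They correctly give $\iota(p')=(p,t)$ with $p$ deep, vanishing exactly on the cluster variables that correspond to those killed by $p'$. One small correction: what justifies Step 1 is that the monomial change of variables intertwines the exchange relations in every seed, because mutation preserves the expression of the $B'$-frozen rows in the row lattice of $B$. Torus-action arguments are not the relevant tool.

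The gap is Step 3. You leave it as a tentative plan, and the plan points the wrong way. Using $B^Tw=0$, the equation you propose to solve reads $\sum_r w'_r r=\sum_j w_j b_j$, with $j$ running over the frozen rows of $B$. This has no solution in general. If $B'$ has no frozen rows, it forces $\sum_j w_jb_j=0$, which fails for most $w\in\ker B^T$ (e.g.\ with principal coefficients). Knowing that each $r$ lies in the row lattice of $B$ is the wrong direction of containment. Worse, any solution with $\zeta^{w'_r}\neq1$ would give an automorphism that moves $p'$, because $p'(y_r)\neq0$.

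The missing idea is one you already half-use in your nontriviality argument. A cluster automorphism fixing a point must rescale by $1$ every cluster variable that is nonzero at that point, in particular every frozen variable. So the automorphism $g$ of $\X_B$ fixing $p$ automatically has trivial frozen part, and your first check comes for free. Since the $y_r$ have weight $0$, $g\times 1$ also acts trivially on the frozen variables of $B$ in the original coordinates of $B''$. Hence it commutes with specializing them to $1$, and it induces on $\A_{B'}$ the rescaling $(g|_{\mathrm{mut}},1)$. This rescaling has the three properties you need:
\begin{enumerate}
\item it is a cluster automorphism of $\X_{B'}$;
\item it fixes $p'$, because $\iota$ is injective and equivariant;
\item it is nontrivial, because $g\neq 1$ with trivial frozen part forces $g|_{\mathrm{mut}}\neq1$.
\end{enumerate}
No lattice solving or separate root-of-unity reduction is needed. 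In fact this shows that deepness and having a nontrivial stabilizer depend only on the mutable part and the vanishing pattern. Really full rank is used only to realize the vanishing pattern of $p'$ on $\X_B$.

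Note also that Proposition~\ref{prop: claut_extension} as stated is only a sufficient condition. To handle an arbitrary stabilizing automorphism you need the converse: a rescaling $t$ is a cluster automorphism iff $\prod_i t_i^{b_{ik}}=1$ for all $k$. This follows by comparing the two monomials of each exchange relation. The paper itself gives no argument here and simply cites \cite[Corollary 3.19]{CGSS}.
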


\section{The puncture-free case}
\label{sec:pf}
The goal of this section will be to prove the following.
\begin{proposition}
The cluster algebras associated with unpunctured surfaces have no mysterious points. \label{prop: unpunc}
\end{proposition}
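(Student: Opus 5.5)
The plan is to prove directly that every deep point $\pt$ of $\X_\surf$ is fixed by a nontrivial cluster automorphism. The automorphism will be a sign rescaling $\claut: x_\gamma\mapsto(-1)^{w_\gamma}x_\gamma$ with $w$ supported on arcs where $\pt$ vanishes. Such a $\claut$ automatically fixes $\pt$. By Proposition \ref{prop: claut_extension} with $\zeta=-1$, it extends to a cluster automorphism as soon as $B^Tw\equiv 0 \bmod 2$.

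Frozen variables will be the boundary segments, which are invertible, so $\pt$ is nonzero on them. If it turns out to be more convenient to work with a really full rank extension, Proposition \ref{prop:RFR} lets us change the frozen part freely. So the whole problem is to produce a triangulation $\Delta$ and a set $Z\subset\Delta$ of arcs vanishing at $\pt$ whose indicator vector $w$ satisfies the parity condition. For an unpunctured surface this means: for every arc $k\in\Delta$, the number of arcs of $Z$ among the sides of the two triangles adjacent to $k$ (counted with the signs $b_{ik}$) is even.

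\textbf{Step 1: a minimal triangulation.} Choose $\Delta$ minimizing $|Z|$, where $Z=\{\gamma\in\Delta:\pt(\gamma)=0\}$. Since $\pt$ is deep, $Z\neq\emptyset$. By minimality, flipping any $\gamma\in Z$ gives $\gamma'$ with $\pt(\gamma')=0$. The Ptolemy relation $\gamma\gamma'=ac+bd$ in the quadrilateral with sides $a,b,c,d$ then gives $\pt(ac)+\pt(bd)=0$. So either both products vanish, or both are nonzero and cancel.

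\textbf{Step 2: local structure.} The combinatorial lemma I aim for is that, for a suitable minimal $\Delta$, every triangle of $\Delta$ contains either $0$ or exactly $2$ arcs of $Z$. Given this, for any arc $k$ each adjacent triangle contributes an even count. When $k\in Z$, its own entry $b_{kk}=0$ does not contribute, and the other $Z$-side in each adjacent triangle is paired so that the counts again match. Hence $B^Tw\equiv0\bmod 2$. The proof of the lemma goes through the Ptolemy relations. Suppose a triangle $T=(\gamma,e,f)$ has $\gamma\in Z$ and $e,f\notin Z$. Consider the quadrilateral formed by $T$ and the triangle across $\gamma$, with $e,f$ among its sides. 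I will use the relation $\pt(ac)=-\pt(bd)\neq0$, together with the relations for flipping $e$ or $f$, to either
\begin{itemize}
\item produce a triangulation with strictly fewer vanishing arcs, contradicting minimality, or
\item replace $\Delta$ by another minimal triangulation in which the offending triangle is removed, decreasing a secondary potential such as the number of triangles with exactly one $Z$-side.
\end{itemize}
The same argument should rule out triangles with three $Z$-sides. In that case all three flips vanish, and the three Ptolemy relations force a vanishing product that lowers $|Z|$ after a double flip.

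\textbf{Main obstacle.} The hard part is the lemma in Step 2, specifically the single-$Z$-side triangles. Here the cancellation $\pt(ac)=-\pt(bd)$ is exactly what one sees in the $A_1$ example (the quadrilateral with $x=x'=0$, fixed by $x\mapsto -x$). The difficulty is showing that this cancellation propagates consistently across neighbouring quadrilaterals. If the pure $\pm1$ rescaling is insufficient, I would instead allow $\zeta$ to be a higher root of unity and $w$ to take values in $\Z/p$ on $Z$, requiring only $B^Tw\equiv0\bmod p$. I would try to solve this linear system on the subgraph of the quiver spanned by $Z$ and its neighbours, with the Ptolemy cancellations supplying the needed compatibility. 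Finally, since an automorphism fixing $\pt$ only has to be nontrivial, it suffices to exhibit one nonzero solution $w$ supported on $Z$, which completes the proof of Proposition \ref{prop: unpunc}.
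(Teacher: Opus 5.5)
Your overall framework matches the paper's: a sign rescaling with $\zeta=-1$ and $w$ the indicator of the arcs where $\pt$ vanishes, made into a cluster automorphism via Proposition \ref{prop: claut_extension}. There is, however, a genuine gap, because the combinatorial lemma in your Step 2 is false.

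At a deep point $\pt$ of an unpunctured surface, every triangle of \emph{every} triangulation has exactly one or three sides on which $\pt$ vanishes (Beyer--Muller, Lemma \ref{lem: 1-or-3}). So no triangulation, minimal or not, has all triangles with $0$ or $2$ sides in $Z$. Your own $A_1$ example already shows this. Both triangulations of the quadrilateral have $|Z|=1$, and each triangle $(a,b,x)$, $(c,d,x)$ has exactly one $Z$-side, since the boundary sides are invertible. The hexagon deep point behaves the same way; there $\pt$ vanishes exactly on the short diagonals $(i,i+2)$. Its minimal triangulations (e.g. the fan $13,14,15$) have $|Z|=2$, and every triangle in them has exactly one $Z$-side. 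The triangulation $13,35,15$ has a triangle with three $Z$-sides. So the descent you sketch, lowering $|Z|$ or a secondary count of single-$Z$-side triangles, has no target configuration to reach.

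The missing idea is that the opposite pattern is what holds, and it serves you equally well. If $k\in Z$, each triangle adjacent to $k$ has $0$ or $2$ other sides in $Z$. If $k\notin Z$, each has exactly one. In both cases the total over the two adjacent triangles is even, so $B^Tw\equiv 0 \bmod 2$ in any triangulation, with no minimality argument needed.

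Second, your claim that a rescaling supported on $Z$ "automatically fixes $\pt$" is not justified. Proposition \ref{prop: claut_extension} only prescribes the scalars on the initial seed; the scalars on all other arcs are then forced. Because $\pt$ is deep, its values on one cluster do not determine it. You must show that the extension multiplies by $-1$ exactly those arcs, in every seed, on which $\pt$ vanishes; otherwise $\pt\circ\claut\neq\pt$ on some arc with $\pt(\gamma)\neq0$.

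The paper handles this with a flip check. In each of the four quadrilateral patterns allowed by the 1-or-3 rule (Figure \ref{fig: quad}), the exchange relation forces $\zeta\zeta'=1$ in the first three patterns and $\zeta\zeta'=-1$ in the fourth. This is exactly what the vanishing pattern of the two diagonals requires, and since 1-or-3 holds in every triangulation, induction on flips finishes the argument.

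Your fallback, using higher roots of unity and some nonzero $w$ supported on $Z$, has the same defect. As stated, it also gives no argument that a suitable $w$ exists.
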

To do this we will use the characterization of deep points of unpunctured surfaces due to Beyer and Muller.
\begin{lemma}
    (\cite{BM}, Lemma 7.1) If $\surf$ is a marked surface without punctures, and $\pt\in\X_\surf$ is a deep point, then, for every triangulation, for each triangle $\Delta=(\gamma_1,\gamma_2,\gamma_3)$ in that triangulation, $\pt$ kills exactly one or three edges of that triangle, i.e. $\#\{i|\pt(\gamma_i)=0\}=1$ or $3$.
    \label{lem: 1-or-3}
\end{lemma}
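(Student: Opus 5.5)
The plan is to study how vanishing propagates through flips, using the Ptolemy exchange relations. Fix a deep point $\pt$ and a triangulation $\Delta$. Recall two facts about unpunctured surfaces. First, every flip is an ordinary quadrilateral flip. Second, if $e$ is the diagonal of a quadrilateral with sides $a,b,c,d$ in cyclic order and $e'$ is the flipped diagonal, then
$$e\,e' = a c + b d,$$
where the sides of one of the triangles containing $e$ are $a,b$ and those of the other are $c,d$. Also, every boundary segment is a frozen variable, which is invertible in $\A_\surf$, so $\pt$ never vanishes on a boundary segment. I would rule out the two forbidden counts separately: first exactly two zeros, then no zeros.

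\emph{Step 1 (two zeros propagate).} Suppose the triangle $(a,b,e)$ has $\pt(a)=\pt(b)=0\neq\pt(e)$. The edge $a$ cannot be a boundary segment, so it is the diagonal of a quadrilateral formed by $(a,b,e)$ and a second triangle $(a,c,d)$. Order the sides so that $c$ is opposite $b$ and $d$ is opposite $e$. Applying $\pt$ to the Ptolemy relation for $a$ gives
$$0=\pt(a)\pt(a') = \pt(b)\pt(c)+\pt(e)\pt(d) = \pt(e)\pt(d),$$
so $\pt(d)=0$. In the second triangle $(a,c,d)$ we now have $\pt(a)=\pt(d)=0$.
\begin{itemize}
\item If $\pt(c)\neq 0$, this triangle again has exactly two zeros, and the pair of zero edges has rotated one step around the marked point shared by $a$ and $d$.
\item If $\pt(c)=0$, the triangle has three zeros. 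In that case I flip inside it, or use the other triangle adjacent to $d$, to continue the rotation.
\end{itemize}
Iterating, the zero edges sweep around a fixed marked point $v$ in the cyclic order of arcs at $v$. Every marked point lies on the boundary, and the cyclic order at a boundary point begins and ends with boundary segments. The sweep therefore eventually forces $\pt$ to vanish on a boundary segment, which is impossible. The delicate part is the bookkeeping of which vertex the rotation fixes. I would check that the pair of zero edges always shares a common vertex $v$ and that the new zero $d$ is the next arc at $v$. Termination then follows from finiteness of the fan at $v$.

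\emph{Step 2 (no zero-free triangle).} This is the step I expect to be the main obstacle. Suppose some triangle $T\in\Delta$ has all three edges nonvanishing. By Step 1, every triangle of every triangulation has $0$, $1$ or $3$ zeros. I would enlarge the zero-free region by induction on the number of arcs $f\in\Delta$ with $\pt(f)=0$. Take a zero arc $f$ bounding the zero-free region, so that one triangle at $f$ has exactly one zero, namely $f$. Flipping $f$ gives $0=\pt(f)\pt(f')=\pt(a)\pt(c)+\pt(b)\pt(d)$. This is not enough by itself to conclude $\pt(f')\neq 0$. So I would use the constraints in the new triangulation $\mu_f(\Delta)$ instead. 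The two triangles $(f',a,d)$ and $(f',b,c)$ must each have $1$ or $3$ zeros, and at least one of $a,b$ is nonzero because the old triangle had a single zero.

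I would do a short case analysis on which of $a,b,c,d$ vanish. The goal is to show that either $\pt(f')\neq 0$, which lowers the zero count, or that Step 1 is violated in $\mu_f(\Delta)$. Running this induction to completion produces a triangulation with no vanishing arcs, contradicting deepness. If the case analysis does not close directly, my fallback is to choose $f$ on the boundary of a \emph{maximal} zero-free region. Then the adjacent triangle on the far side of $f$ has three zeros, which pins down $\pt(c)=\pt(d)=0$. The relation then forces $\pt(a)\pt(c)+\pt(b)\pt(d)=0$ trivially, and the parity constraint on $(f',a,d)$, which has $\pt(a)\neq0$ and $\pt(d)=0$, forces $\pt(f')=0$ with exactly one zero. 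From there I would propagate as in Step 1 to reach a boundary segment.
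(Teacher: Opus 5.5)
The paper does not prove this lemma; it imports it from \cite{BM}. So your argument has to stand on its own. As written it has a gap in each step, and the one in Step 2 is serious.

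\textbf{Step 1.} The three-zero case is left open, and it cannot be continued locally. Suppose $(a,c,d)$ has all three sides vanishing. In the exchange relation of any one of its sides, the other two sides are adjacent in the quadrilateral, so they lie in different monomials, and the relation reads $0=0$. In particular, the triangle beyond $d$ says nothing about the next arc at $v$. Flipping $a$ leaves $\pt(a')$ undetermined. If $\pt(a')=0$, the two-zero triangle reappears as $(e,c,a')$, with its zero pair at the far endpoint of $d$. Your termination argument rests on monotone progress around $v$, and that is now lost.

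A repair is to stop sweeping inside the fixed $\Delta$. Exchange relations are identities in $\A_\surf$, so you may complete $T$ to any triangulation you like. Take the triangle across $a$ to be the one whose side $d$ at $v$ is the boundary segment at $v$ on that side; its third side $c$ is the push-off of $d\cup a$. Your own computation then gives $\pt(e)\pt(d)=0$ with $d$ frozen, which is a contradiction in one step. You still need to check that this triangle is compatible with $T$ when other corners of $T$ sit at $v$.

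\textbf{Step 2.} This is the real gap, and the obstruction is not bookkeeping. Take the case you could not close: both triangles at the vanishing arc $f$ have $f$ as their only zero. Then the relation $0=\pt(a)\pt(c)+\pt(b)\pt(d)$ constrains the sides but not $\pt(f')$. Moreover, $\pt(f')=0$ is consistent with Step 1 in $\mu_f(\Delta)$, since both new triangles then have exactly one zero.

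This configuration does occur at deep points. On the hexagon, take the Pl\"ucker point of the $2\times 6$ matrix whose columns alternate $(1,0)^T,(0,1)^T$. It is deep and vanishes exactly on $13,15,35,24,26,46$. In the quadrilateral $1234$, the sides $12,23,34,14$ are nonzero while both diagonals vanish. So no analysis at a single arc can force the number of vanishing arcs to drop.

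The fallback does not fix this, for two reasons.
\begin{itemize}
\item Maximality of the region puts no constraint on the triangle across $f$. That triangle may have $f$ as its only zero, which is the stuck case again.
\item When that triangle does have three zeros, Step 1 applied to $(f',a,d)$ with $\pt(a)\neq0=\pt(d)$ forces $\pt(f')\neq0$, not $\pt(f')=0$. That subcase actually lowers the count, and there is no two-zero triangle left to propagate.
\end{itemize}

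What is missing is a global input linking a zero-free triangle to a nowhere-vanishing cluster. One route is induction on rank via freezing.
\begin{itemize}
\item Suppose $\pt(a)\neq0$ for an arc side $a$ of $T$. Every cluster containing $a$ still has a vanishing variable. So $\pt$ is deep for $\surf$ cut along $a$, and hence deep on some component.
\item If that component contains $T$ (for example when $a$ is non-separating), the inductive hypothesis applied to $T$ gives a contradiction.
\item The separating case needs a further reduction, for instance to an ear whose diagonal does not vanish.
\end{itemize}
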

With this lemma, we can directly prove proposition \ref{prop: unpunc} by constructing what we call the \emph{sign-reversing cluster automorphism}.
\begin{definition}
    Given an unpunctured surface $\surf$ and a deep point $\pt\in\X_\surf$, let the deep sign reversing cluster automorphism $\sgclaut$ be the map as follows. For each arc $\gamma$ with $\pt(\gamma)=0$, $\sgclaut(\gamma)=-\gamma$, and for each arc $\gamma$ with $\pt(\gamma)\ne0$, $\sgclaut(\gamma)=\gamma$. 
\end{definition}
To prove Proposition \ref{prop: unpunc}, it suffices to show the following.
\begin{lemma}
    $\sgclaut$ is a cluster automorphism.
\end{lemma}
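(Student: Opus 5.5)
Fix a triangulation $\Delta$ of $\surf$ with associated seed $(\vec x,\tilde B_{Q_\Delta})$, where the frozen variables are the boundary segments. We define $w\in\Z^{n+m}$ by $w_\gamma=1$ if $\pt(\gamma)=0$ and $w_\gamma=0$ otherwise, and take $\zeta=-1$, a $2$nd root of unity. By Proposition \ref{prop: claut_extension}, the rescaling $\gamma\mapsto(-1)^{w_\gamma}\gamma$ extends uniquely to a cluster automorphism $\claut$ of $\A_\surf$ provided $\tilde B^T w\equiv 0 \pmod 2$. This automorphism rescales every cluster variable by some scalar, and a rescaling of an invertible variable by a square of order two must be $\pm1$ since $\claut^2$ fixes the seed. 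It then remains to show two things: first, that the parity condition holds; second, that the resulting $\claut$ agrees with $\sgclaut$ on every arc, and not only on those in $\Delta$.

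\textbf{Parity condition.} For a mutable arc $\gamma\in\Delta$, the column of $\tilde B$ indexed by $\gamma$ records arrows between $\gamma$ and the other sides of the (at most two) triangles containing $\gamma$. Since there are no punctures, each arc lies in two distinct triangles, and each triangle contributes a $3$-cycle in $Q_\Delta$. If $\gamma$ is a side of a triangle with other sides $\alpha,\beta$, that triangle contributes $\pm(w_\alpha - w_\beta)$ to $(\tilde B^Tw)_\gamma$, with one arrow in and one out. (Boundary segments appear as frozen vertices here, and the cancellations in the quiver only remove pairs whose contributions agree mod 2.) Modulo $2$, the contribution is therefore $w_\alpha+w_\beta$.

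Lemma \ref{lem: 1-or-3} says each triangle has exactly one or three zero sides. Consider the two cases for $\gamma$:
\begin{itemize}
\item If $\pt(\gamma)\neq0$, then exactly one of $\alpha,\beta$ is zero in each triangle containing $\gamma$, so each triangle contributes $1$, and the two triangles together contribute $2\equiv0$.
\item If $\pt(\gamma)=0$, then in each triangle either both of $\alpha,\beta$ are zero or neither is, so each triangle contributes $0 \pmod 2$.
\end{itemize}
A subtlety arises with self-folded configurations and with arcs appearing twice in a triangle. These cannot occur without punctures, but I would double-check degenerate triangles, where two sides coincide as arcs, directly from Definition \ref{def:quiv}. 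Boundary segments may be zero or nonzero at $\pt$; since they are frozen and invertible in $\A_\surf$, $\pt$ is nonzero on them, so $w=0$ there. This is consistent with the lemma as applied to boundary sides.

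\textbf{Agreement on all arcs.} I would propagate the claim by mutation, inducting along flips. Suppose $\claut$ acts as $\sgclaut$ on the seed $\Delta$, and flip $\gamma$ to $\gamma'$ inside a quadrilateral with sides $a,b,c,d$, where $\gamma\gamma'=ac+bd$. Then $\claut(\gamma')=\epsilon\gamma'$ with $\epsilon=(-1)^{w_\gamma+w_a+w_c}$; the value is well defined because the parity condition makes $w_a+w_c\equiv w_b+w_d$. We must show $\epsilon=-1$ exactly when $\pt(\gamma')=0$.

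Apply Lemma \ref{lem: 1-or-3} to both triangulations. In $\Delta$, the triangles are $(\gamma,a,b)$ and $(\gamma,c,d)$; after the flip they are $(\gamma',a,d)$ and $(\gamma',b,c)$ (with suitable labelling). The condition that each triangle has an odd number of zero sides gives $w_{\gamma'}\equiv 1+w_a+w_d$, while in $\Delta$ we have $w_\gamma\equiv 1+w_a+w_b$. Hence
$$w_\gamma+w_a+w_c\equiv 1+w_b+w_c\equiv w_{\gamma'} \pmod 2,$$
which is what we need. Since every arc is reached by flips (Lemma \ref{lem:clvars}), $\claut=\sgclaut$ on all cluster variables.

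The main obstacle is the first step: carefully matching the mod-$2$ column sums of $\tilde B$ with the triangle parity. Particular care is needed where the same arc bounds both triangles, or where the cancellation of 2-cycles in Definition \ref{def:quiv} alters the arrows.
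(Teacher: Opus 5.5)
Your proof is correct and follows the same route as the paper. You apply Proposition \ref{prop: claut_extension} with $\zeta=-1$ and $w$ the indicator of vanishing of $\pt$, verify the mod-$2$ kernel condition using Lemma \ref{lem: 1-or-3}, and propagate agreement with $\sgclaut$ through flips. Where the paper enumerates the four vanishing patterns of Figure \ref{fig: quad}, you derive both the kernel condition and the flip rule $w_{\gamma'}\equiv w_\gamma+w_a+w_c \pmod 2$ from the per-triangle parity $w_\alpha+w_\beta+w_\gamma\equiv 1 \pmod 2$; this is an equivalent, slightly more algebraic, presentation of the same argument.
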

\begin{proof}
    By \ref{prop: claut_extension}, as we are examining a toric action defined by $\zeta^{\vec w}$ with $\zeta=-1$, and $w$ being the indicator of the vanishing of $\pt$, we need only show that there exists a triangulation where $\vec w$ is in the mod 2 kernel of $B^T$. 
    \par In the language of arcs, each $\gamma$ belongs to exactly 2 triangles, $\{\gamma_1,\gamma_2,\gamma\}$ and $\{\gamma_3,\gamma_4,\gamma\}$ (i.e. the quiver has arrows $\gamma_2\to\gamma$, $\gamma_4\to \gamma$, $\gamma\to\gamma_1$, $\gamma\to \gamma_3$), and as mod 2 sum and difference agree, it suffices to show $|\{i|\pt(\gamma_i)=0\}|$ is even. We will show this diagrammatically, as, by Lemma \ref{lem: 1-or-3}, we know that every triangle in every triangulation will have one or three sides killed by $\pt$, so the only possible quadrilaterals are given in Figure \ref{fig: quad}. We note that each of these quadrilaterals does in fact have an even number of boundary edges with $\pt(\gamma_i)=0$ (as shown in blue and dashed).
    \par Thus, as long as $\pt$ is deep, we have that $(-1)^{\vec w}$ extends to a unique cluster automorphism $\sgclaut$ on $\A_\surf$ for any fixed triangulation. Note that for any chosen triangulation, we get such an extension.
    \par Now, if two triangulations differ by a flip, we will show that the corresponding automorphisms agree. Note that the automorphism is defined on $\C(x_1,...,x_{n+m})$, so we need only show that the scaling on the new cluster variable is uniquely determined to agree with the definition. Note that we can check the 4 cases in Figure \ref{fig: quad}. Note for the first 3 vanishing patterns, the product of the scaling factors $\zeta\zeta'=1$, whereas for the fourth pattern $\zeta\zeta'=-1$. Knowing this and $\zeta$ uniquely fixes $\zeta'$. Furthermore, in each of the 4 cases, we have that the required $\zeta'$ agrees with $\sgclaut$. Thus, the extension of $\sgclaut$ from the initial seed agrees with the extension of $\sgclaut$ after one mutation.
    \par Now note that any two triangulations can be obtained via a sequence of mutations, so the extension of $\sgclaut$ defines the same cluster automorphism regardless of the initial seed. Thus $\sgclaut$ is a well-defined cluster automorphism on $\surf$, as desired.
\end{proof}
\begin{figure}
    \centering
    \begin{tikzpicture}[line width = 2pt]
    \draw (0,0) -- (2,0) -- (2,2) -- (0,2) -- cycle;
    \draw (0,0)--(2,2) [color=blue][dashed];
    \draw (2,0)--(0,2) [color=blue,dashed];
    
    \draw (3,0) -- (5,0) -- (5,2) -- (3,2) -- cycle [color=blue,dashed];
    \draw (3,0)--(5,2) [color=blue,dashed];
    \draw (5,0)--(3,2) [color=blue,dashed];

    
    \draw (6,0) -- (6,2) (6,2) -- (8,0) (8,0) -- (8,2) (8,2) -- (6,0) [color=black]; 
    \draw (6,0)--(8,0) [color=blue,dashed];
    \draw (6,2)--(8,2) [color=blue,dashed];
    
    \draw (9,0) -- (11,0) (11,0) -- (11,2) (11,2) -- (9,0)[color=blue,dashed];
    \draw (9,0)--(9,2)--(11,2)  [color=black];
    \draw (9,2)--(11,0) [color=black];
\end{tikzpicture}
    \caption{The four possible vanishing patterns of a deep point $\pt$ on a quadrilateral due to the 1-or-3 condition on triangles. Blue dashed arcs represent arcs $\gamma$ with $\pt(\gamma)=0$ while black arcs represent arcs $\gamma$ with $\pt(\gamma)\ne0$.}
    \label{fig: quad}
\end{figure}

Noting that $\sgclaut$ fixes $p$ by definition, we have shown that $p$ can not be mysterious, and as $p$ was an arbitrary deep point of a puncture-free surface type cluster variety, we have completed the proof of Proposition \ref{prop: unpunc}.
\section{The punctured surface case}
\label{sec:ps}
In this section, we will prove the following propositions.
\begin{proposition}
    Given a point $\pt\in\X_\surf$, if there is some puncture $\pun\in\surf$ which is $\pt$-essential, then $\pt$ is not a mysterious point.\label{prop: essential}
\end{proposition}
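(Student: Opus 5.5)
Since $\pun$ is $\pt$-essential, every tagged arc with an endpoint at $\pun$ vanishes at $\pt$. This already makes $\pt$ deep: every tagged triangulation contains at least one arc incident to $\pun$. So it suffices to produce a non-identity cluster automorphism fixing $\pt$. The natural candidate is a rescaling that sees only the arcs at $\pun$. Fix $\zeta\in\C^\times$ with $\zeta\neq 1$. Define $\reclaut$ by
\[
\reclaut(\gamma)=\zeta^{w_\pun(\gamma)}\gamma,
\]
where $w_\pun(\gamma)\in\{0,1,2\}$ counts the endpoints of $\gamma$ at $\pun$ (with signs, see below). Every arc with $w_\pun(\gamma)\neq0$ vanishes at $\pt$, so $\pt\circ\reclaut=\pt$ and $\pt$ is fixed. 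The map is non-identity because such arcs exist. Hence $\pt$ is not mysterious.

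\textbf{Step 1: extension from a seed.} Fix a tagged triangulation $\Delta$ and check condition (\ref{eqn:Bwzero}) of Proposition \ref{prop: claut_extension}, namely $B^Tw=0$. For a vertex $\gamma$ of $Q_\Delta$, the entry $(B^Tw)_\gamma$ is the signed count of arrow-ends at $\pun$-incident arcs adjacent to $\gamma$. The arrows of $Q_\Delta$ come from consecutive pairs of arcs around marked points, so it is cleanest to organize them by triangles. Each triangle containing $\gamma$ contributes one incoming and one outgoing neighbour. I would check triangle by triangle that the two neighbours carry the same number of endpoints at $\pun$, relative to the corner they share with $\gamma$.

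\textbf{Why a plain count fails, and the tagged weight.} The plain endpoint count does not balance. A triangle with a corner at $\pun$ has exactly two sides through that corner, while the opposite side need not touch $\pun$. The correct weight should instead resemble the tagged-rotation grading at a puncture, in which plain and notched ends receive opposite weights. This is the role of $\taginv$: the automorphism should act as $\zeta$ on plain ends at $\pun$ and as $\zeta^{-1}$ on notched ends. Equivalently, I would use the known $\Z$-grading of $\A_\surf$ attached to each puncture, with $\deg\gamma=\#(\text{plain ends at }\pun)-\#(\text{notched ends at }\pun)$. I would verify $B^Tw=0$ locally around $\pun$ using the cyclic-order description in Definition \ref{def:quiv}. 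Any $\zeta$, for instance $\zeta=-1$, then gives an automorphism via Proposition \ref{prop: claut_extension}.

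\textbf{Main obstacle.} The main obstacle is the quiver bookkeeping. This includes self-folded triangles and pairs of arcs that are homotopic but differently tagged, which are treated as one curve in Definition \ref{def:quiv}. It also includes the degenerate case of a puncture with at most two incident arcs. For these I would switch to a triangulation in which $\pun$ has at least three plain incident arcs and no self-folded triangle at $\pun$, which exists whenever $\surf$ is not too small.

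\textbf{Step 2: independence of the seed.} As in the unpunctured case, I would show that the extension does not depend on the seed. Since the weight is additive across a flip (the exchange relation is homogeneous for this grading), the new variable $\gamma'$ receives exactly $\zeta^{w_\pun(\gamma')}$. Connectivity of the exchange graph of tagged triangulations then gives a well-defined $\reclaut$ that fixes $\pt$, which proves Proposition \ref{prop: essential}.
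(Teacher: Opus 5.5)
Your proposal is essentially the paper's proof: the same horocycle rescaling $\reclaut$ (weight $+1$ per plain end and $-1$ per notched end at $\pun$), shown to be a cluster automorphism via Proposition~\ref{prop: claut_extension} and the homogeneity of every exchange relation, and fixing $\pt$ because every arc at $\pun$ vanishes there. The paper checks homogeneity directly on the three flip types of Figure~\ref{fig:three_ways}, which gives $B^Tw=0$ in every seed at once, so your separate triangle-by-triangle step is unnecessary.

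Two small corrections. First, your explanation of why the plain count fails is off. The ordinary Ptolemy relation is balanced for any endpoint count, and at an endpoint of $\gamma$ the cancellation happens across the two triangles containing $\gamma$, not within one. It is the tagged relations, such as $\gamma_{bc}\gamma_{ab^{\bowtie}}=\gamma_{ac}+\gamma_{ac}'$, that force weight $-1$ on notched ends. Second, take $\zeta^2\neq1$ rather than $\zeta=-1$: on a once-punctured closed surface every arc is a plain loop of weight $2$, so $\zeta=-1$ acts as the identity.
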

\begin{proposition}
    Given a point $\pt\in\X_\surf$, if all punctures $\pun\in\surf$ are not $\pt$-essential, then $\pt$ is not a mysterious point. \label{prop: cuttable}
\end{proposition}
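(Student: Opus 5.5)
The plan is to localize at nonvanishing arcs so that every puncture becomes a boundary marked point, and then reduce to the unpunctured case (Proposition \ref{prop: unpunc}) or the essential case (Proposition \ref{prop: essential}). We may assume $\pt$ is deep, since otherwise there is nothing to show.

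\emph{Step 1 (cutting).} Since $\pun$ is not $\pt$-essential, there is a (possibly notched) tagged arc $\gamma$ at $\pun$ with $\pt(\gamma)\neq 0$. By local acyclicity and \cite[Lemma 3.4]{Muller2013LocallyAcyclic}, freezing $\gamma$ gives $\A' = \A_\surf[\gamma^{-1}]$, and $\pt$ extends to a point of $\X' = \operatorname{Spec}(\A')$. On the surface side, freezing $\gamma$ corresponds to cutting $\surf$ along $\gamma$, as in Muller's proof of local acyclicity. If $\gamma$ is notched at $\pun$, we first apply the tag-changing symmetry at $\pun$ to make it plain. The cut surface $\surf_\gamma$ has $\pun$ (and the other endpoint, if it is a puncture) turned into boundary marked points, and $\gamma$ becomes boundary segments, i.e.\ frozen variables. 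The clusters of $\A'$ are exactly the clusters of $\A_\surf$ containing $\gamma$. So $\pt$ is still deep in $\X'$, and every zero of $\pt$ in such a cluster is a zero on an arc of $\surf_\gamma$.

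\emph{Step 2 (induction on $|\puns|$).} Apply Step 1 repeatedly. At each stage, look at the remaining punctures of the current cut surface. If one of them is $\pt$-essential there, stop and invoke Proposition \ref{prop: essential} on the cut surface. Otherwise cut again. Since $|\puns|$ strictly drops, we eventually either land in the essential case or reach an unpunctured surface, where Proposition \ref{prop: unpunc} gives the sign-reversing automorphism $\sgclaut$. Either way, we obtain a nontrivial cluster automorphism $\claut'$ of the localized algebra $\A'' = \A_\surf[\gamma_1^{-1},\dots,\gamma_k^{-1}]$ that fixes $\pt$. We will need to check that the cut surfaces stay inside the allowed class (no degenerate discs), or else handle small cases by hand. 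This looks routine.

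\emph{Step 3 (lifting; main obstacle).} The hard part is showing that $\claut'$ comes from a nontrivial cluster automorphism of $\A_\surf$ itself. The frozen arcs $\gamma_i$ impose extra constraints, namely the rows of $B^T w$ indexed by the columns of the $\gamma_i$, which the cut surface never sees. My plan is to avoid abstract lifting and instead define the candidate automorphism directly on all tagged arcs of $\surf$ by the same rule used downstream. For the sign-reversing case this means $\gamma\mapsto -\gamma$ when $\pt(\gamma)=0$ and $\gamma \mapsto \gamma$ otherwise; for the essential case it is the analogous rescaling. I would then verify the condition of Proposition \ref{prop: claut_extension} on a triangulation $\Delta$ of $\surf$ containing all the $\gamma_i$.

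For the columns of mutable arcs of $\surf_{\gamma_1,\dots,\gamma_k}$, the condition is inherited from the cut surface, because the quiver of $\Delta$ restricted to these columns is the cut-surface quiver with $\gamma_i$ as frozen vertices. For the column of each $\gamma_i$, I would use that $w_{\gamma_i}=0$ (as $\pt(\gamma_i)\neq 0$). The two triangles of $\Delta$ adjacent to $\gamma_i$ then form a quadrilateral in the cut surface. Applying the vanishing-pattern analysis of Lemma \ref{lem: 1-or-3} and Figure \ref{fig: quad}, extended to triangles with one frozen side, should show that the mod-2 sum over its four sides is even. The part I expect to be delicate is self-folded triangles at the former punctures. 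There I would instead check that $\claut'$ commutes with the exchange relation for $\gamma_i$ by a direct case computation.

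Finally, the lifted automorphism is nontrivial, because it restricts to $\claut'$ on $\A''$, and it fixes $\pt$. Hence $\pt$ is not mysterious.
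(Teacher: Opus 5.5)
\textbf{There is a genuine gap in how you choose the cuts.} Your overall strategy is the paper's, and most steps match it:
\begin{itemize}
\item cut along arcs on which $\pt$ does not vanish until no punctures remain;
\item apply $\sgclaut$ on the unpunctured cut surface;
\item lift it by checking the mod-$2$ kernel condition at the rows of the cut arcs;
\item use $\A_\surf\subseteq\A''$, from local acyclicity, to conclude that the lift fixes $\pt$.
\end{itemize}
For the lifting step you have the right local fact. In the cut surface, the two triangles of $\Delta$ adjacent to a cut $\gamma_i$ are two separate triangles, each with a non-vanishing boundary side. Lemma \ref{lem: 1-or-3} then gives exactly one vanishing side in each, for a total of $2$.

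That count needs Lemma \ref{lem: 1-or-3} on \emph{both} triangles, and so needs $\pt$ to be deep on the component containing each one. In Step 1 you take an arbitrary non-vanishing tagged arc at $\pun$, and this may be a loop at $\pun$. A loop can be separating; for example, in a punctured disc, a loop at $\pun$ enclosing two other punctures. Nothing in your argument rules out that this is the arc you pick. The final cut surface is then $\surf_A\sqcup\surf_B$, and deepness of $\pt$ only forces $\pt$ to be deep on one component, say $\surf_A$ (as in the proof of Corollary \ref{cor:multi}). If $\pt|_{\surf_B}$ is not deep, it lies in a cluster torus of $\surf_B$. Any cluster automorphism fixing it is then trivial on that factor, so Proposition \ref{prop: unpunc} only gives you $\sgclaut$ on $\surf_A$ times the identity on $\surf_B$. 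At the row of the separating cut your parity count becomes $1+0$, which is odd, so the lift in Step 3 fails.

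The paper avoids this with Lemma \ref{lem: cuts}, via Lemma \ref{lem: bndy_cut}. Each cut runs from a puncture to the boundary and avoids the earlier cuts, so the cut surface stays connected, $\pt'$ is deep on it, and Lemma \ref{lem: 1-or-3} applies to every triangle. To repair your version you have two options:
\begin{itemize}
\item restrict to cuts that are not separating loops (arcs from a puncture to the boundary or to another puncture never disconnect), and prove that such a non-vanishing arc exists in the current cut surface; or
\item show directly that a separating loop cannot be non-vanishing at $\pt$.
\end{itemize}
The check you flagged, ``no degenerate discs'', is not the relevant one; connectivity is.

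Two smaller points.
\begin{itemize}
\item The rule ``$\gamma\mapsto-\gamma$ iff $\pt(\gamma)=0$'' may only be imposed on the seed $\Delta$ and then extended. It need not hold on all tagged arcs; the paper notes this for notched arcs. So do not define the map on all arcs at once.
\item Your essential branch is harmless but never occurs. Suppose a remaining puncture $\pun'$ were $\pt$-essential in the cut surface. The horocycle rescaling at $\pun'$ on $\A_\surf$, with $\zeta$ not a root of unity, agrees with the cut-surface one on a seed containing the cuts. It would therefore fix $\pt$ on $\A_\surf\subseteq\A''$, forcing $\pt$ to vanish on every arc at $\pun'$ in $\surf$. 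That contradicts the hypothesis that $\pun'$ is not $\pt$-essential.
\end{itemize}
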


After proving the above propositions, Theorem \ref{thm} will follow. To prove Proposition \ref{prop: essential}, we will prove the following lemma about the \emph{Horocycle Rescaling map}.
\begin{definition}
    The \emph{Horocycle Rescaling map} at a puncture $\pun$ with $\zeta\in\C^\times$ is a map $\reclaut:\A_\surf\to\A_\surf$ defined as follows. For each curve $\gamma$, if $\gamma$ is not incident to $\pun$, $\reclaut(\gamma)=\gamma$, if $\gamma$ has one plain end at $\pun$ then $\reclaut(\gamma)=\zeta\gamma$, if it has two plain ends then $\reclaut(\gamma)=\zeta^2\gamma$, if $\gamma$ has one notched end at $\pun$ then $\reclaut(\gamma)=\zeta^{-1}\gamma$, and if $\gamma$ has two notched ends at $\pun$ then $\reclaut(\gamma)=\zeta^{-2}\gamma$.\label{def: horocycle_map}
\end{definition}
\begin{lemma}
    The Horocycle Rescaling map is a cluster automorphism.
\end{lemma}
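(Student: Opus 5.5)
Following the proof of the sign-reversing automorphism, we argue in two stages: first show that the rescaling, restricted to one tagged triangulation, extends to a cluster automorphism by Proposition \ref{prop: claut_extension}; then show that the extensions from different triangulations agree, via flips. Throughout, $\zeta$ is an arbitrary element of $\C^\times$, so we use condition \eqref{eqn:Bwzero}, not the mod $p$ version.

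\textbf{Step 1: a good triangulation.} We choose $\Delta$ so that at $\pun$ all arcs are plain. Define $w_\gamma$ as the signed number of ends of $\gamma$ at $\pun$: $+1$ per plain end, $-1$ per notched end, and $0$ for frozen boundary segments, which are never incident to a puncture. We must check that $(B^Tw)_\gamma=\sum_\delta b_{\delta\gamma}w_\delta=0$ for every mutable $\gamma$.

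\textbf{Step 2: the local count.} By Definition \ref{def:quiv}, arrows come from consecutive arcs in the cyclic order around each marked point. Only arcs with ends at $\pun$ have nonzero weight. The contributions to column $\gamma$ are therefore organized by the ends of arcs at $\pun$, as follows.
\begin{itemize}
\item Each arc $\delta$ incident to $\pun$ appears in the cyclic order around $\pun$ once per end.
\item At $\pun$, each end of $\gamma$ receives exactly one incoming arrow, from its predecessor, and emits exactly one outgoing arrow, to its successor. Since both neighbours have weight $1$ per end, these cancel.
\item At other marked points, an arrow between $\gamma$ and an arc $\delta$ incident to $\pun$ comes from $\gamma,\delta$ being consecutive sides of a triangle. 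The triangle with third vertex at $\pun$ is paired with the adjacent triangle, so $\delta$ appears both before and after $\gamma$ and the contributions cancel.
\end{itemize}
The cleanest way to organize this is triangle by triangle. Each triangle with sides $\alpha,\beta,\gamma$ contributes the oriented 3-cycle $\alpha\to\beta\to\gamma\to\alpha$. The weights of its sides at $\pun$ satisfy that each vertex of the triangle at $\pun$ contributes $1$ to exactly the two sides meeting there. A direct check shows that the 3-cycle contribution to $(B^Tw)_\gamma$ is then $w_\beta-w_\alpha$. Summing over the two triangles containing $\gamma$ telescopes to zero, because going around each end of $\gamma$ the successive sides share a vertex.

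\textbf{Main obstacle: degenerate configurations.} These are self-folded triangles, the tagged-arc convention in Definition \ref{def:quiv} (homotopic arcs with different taggings are treated as the same curve), loops based at $\pun$ with weight $2$, and the rule dropping arrows when at most two curves meet at a point. The plan is to avoid most of them by choosing $\Delta$ with no self-folded triangle at $\pun$ and no loop at $\pun$, which is possible except in small cases. For arcs at other punctures with notched or self-folded structure, the weights there are $0$, so only the quiver shape matters. The remaining small cases, such as a once-punctured surface where every triangulation is forced, would be checked by hand. The quiver of a self-folded triangle agrees with that of the plain triangulation with the loop replaced by the notched arc, and the notched arc has weight $-1$ at $\pun$ while the loop has weight $2$. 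We would verify that these are consistent with the exchange relation $\ell=\gamma\gamma^{\bowtie}$, which yields weight $1+(-1)=0$ for the loop viewed at $\pun$ as a plain/notched product. This is exactly where we expect care to be needed.

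\textbf{Step 3: consistency under flips.} Given the extension from $\Delta$, we check that for each flip $\gamma\mapsto\gamma'$ in a quadrilateral, the induced scaling on $\gamma'$ equals $\zeta^{w_{\gamma'}}$ as prescribed by Definition \ref{def: horocycle_map}. The exchange relation $\gamma\gamma'=\alpha\delta+\beta\epsilon$ forces $w_\gamma+w_{\gamma'}=w_\alpha+w_\delta=w_\beta+w_\epsilon$. This holds because both sides count the corners of the quadrilateral lying at $\pun$, with sign determined by tagging. Tagged flips involving notched arcs reduce to the plain case by the tag-switching symmetry at $\pun$, which replaces $\zeta$ with $\zeta^{-1}$. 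Since tagged triangulations are connected by flips, away from the exceptional surfaces, the extension is independent of the initial triangulation and rescales every arc as prescribed. Hence $\reclaut$ is a cluster automorphism.
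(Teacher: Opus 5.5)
Your two-stage plan is the one the paper follows: reduce via Proposition \ref{prop: claut_extension} to $B^Tw=0$, then check compatibility with flips. The paper merges the two stages with one observation worth adopting. The quantity $(B^Tw)_\gamma$ is exactly the difference of the $w$-weights of the two monomials in the exchange relation at $\gamma$. So proving that every exchange relation is balanced at $\pun$ (left side $\gamma\gamma'$ included) gives both $B^Tw=0$ in every seed and the correct factor $\zeta^{w_{\gamma'}}$ on each new arc.

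Your triangle-by-triangle telescoping in Step 2 is correct, but it is the Ptolemy case of Step 3 in another form. The per-end bullets before it do not cancel locally as stated, since $w_\delta$ counts all ends of $\delta$, not one per end. Because the telescoping counts corners, it also handles loops at $\pun$ of weight $2$ with no change. You need this: avoiding such loops is impossible on a once-punctured closed surface of positive genus, where every arc is a loop at $\pun$. These surfaces are an infinite family, not small cases to check by hand.

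\textbf{The gap is the tagged part}, which is what actually justifies the exponent $-1$ on notched arcs. Tag-switching at $\pun$ only exchanges all-plain and all-notched triangulations at $\pun$. Any flip path from an all-plain triangulation to one containing an arc notched at $\pun$ passes through a triangulation with a conjugate pair $r,r^{\bowtie}$ at $\pun$. The exchange relations there are not Ptolemy relations $\gamma\gamma'=\alpha\delta+\beta\epsilon$ among tagged arcs. They are the paper's relations
$$\gamma_{bc}\gamma_{ab^{\bowtie}}=\gamma_{ac}+\gamma_{ac}', \qquad \gamma_{ab}'\gamma_{ac}'=\gamma_{ab}\gamma_{ac}+\gamma_{ad}\gamma_{ad^{\bowtie}}\gamma_{bc},$$
whose balance the paper checks directly: in the first, $b$ cancels against $b^{\bowtie}$; in the second, every term has letters $a^2bc$.

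You leave this at ``we would verify'', and the weights you assign are inconsistent. The loop $\ell$ of a self-folded triangle has weight $2$ at $\pun$ only when it is based at $\pun$. In that case $r^{\bowtie}$ is notched at the other, enclosed, puncture, so its weight at $\pun$ is $+1$, not $-1$. When $\pun$ is the enclosed puncture, $\ell$ has weight $0$ and $r^{\bowtie}$ has weight $-1$. In both cases $w_\ell=w_r+w_{r^{\bowtie}}$. With this identity, substitute $x_\ell=x_rx_{r^{\bowtie}}$ into the ideal Ptolemy relations, which your corner count shows are balanced. This gives exactly the balance of the two relations above and closes the gap.
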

\begin{proof}
    Note, by Proposition \ref{prop: claut_extension}, it suffices to find a triangulation of $\surf$ such that $\vec w\in\ker(B^T)$, where $w$ is the indicator of incidence to $\pun$ (with $w$ taking on a value of $-1$ on an arc notched at $\pun$), and then show that the obtained map is independent of choice of triangulation.
    \par For an arc $\gamma$ we must test if the $\gamma$ row of $B^Tw$ is 0. If we do that, we show that, for any triangulation, the horocycle rescaling map obeys condition Eqn \ref{eqn:Bwzero}. Now note that there are 3 possibilities for a mutable arc on a surface, up to endpoint identification, as depicted in Figure \ref{fig:three_ways}. 
    \par The row of $B^Tw$ is the sum over the vertices incident to the specified vertex of the weight of $w$. Note that the monomials in the cluster relation are given by the in and out edges respectively. These terms exactly correspond to the positive and negative part of $B^Tw$. So if the monomials in the cluster relations are all scaled by the same total scaling factor, for all relations, then $B^Tw=\vec 0$.
    \par If we restrict this to a Horocycle rescaling map, the above condition is exactly that each monomial in each cluster relation has the same incidences to each puncture and marked point. We need only ensure that each cluster relation is "balanced" in the sense that each monomial in the cluster relation counts the same incidences to the marked points and punctures. Let us examine the cluster relations. 

\begin{figure}
        \centering
        \begin{tikzpicture}[line width = 1.5 pt, scale=2]
            \draw (-0.5,-0.5)--(0.5,-0.5)--(0.5,0.5)--(-0.5,0.5)--cycle;
            \draw (0.5,-0.5)--(-0.5,0.5);
            \draw[dashed] (-0.5,-0.5)--(0.5,0.5);
            \node[draw, circle, fill=black, scale=0.5] (A0) at (2.5,0) { };
            \draw (2,0)--(2.5,0)--(3,0);
            \draw (2,0)..controls (2.5,1)..(3,0);
            \draw (2,0)..controls (2.5,-1)..(3,0);
            \draw[dashed] (2,0)..controls (2.25,0.25)..(2.5,0);
            \node (l1) at (-0.6,-0.6) {$a$};
            \node (l2) at (-0.6,0.6) {$b$};
            \node (l3) at (0.6,0.6) {$c$};
            \node (l4) at (0.6,-0.6) {$d$};
            
            \node (A1) at (2.4,0.1) { $\bowtie$};
            \node (l5) at (1.9,0) {$a$};
            \node (l6) at (2.6,-0.1) {$b$};
            \node (l7) at (3.1, 0) {$c$};

            \draw ( 4,-0.75) -- (6, -0.75) -- (5,1)--cycle;
            \node[draw, circle, fill=black, scale=0.5] (A2) at (5,-.25) { };
            \draw( 4,-0.75)..controls (5,0.5).. (6, -0.75);
            \draw[dashed]( 4,-0.75)..controls (5.5, -0.7)..(5,1);
            \draw( 4,-0.75)..controls(4.5,-0.3 )..(5,-.25);
            \draw( 4,-0.75)..controls(4.5,-0.7 )..(5,-.25);
            \node (A3) at (4.87,-0.4) { $\bowtie$};
            
            \node (l8) at (3.9, -0.85) { $a$};
            \node (l9) at (5, 1.1) { $b$};
            \node (l10) at (6.1, -0.85) { $c$};
            \node (l11) at (5,-.1) {$d$};

        \end{tikzpicture}
        \caption{The three types of cluster relations for a tagged surface. All other cluster relations are foldings of these. The dashed line represents the mutated cluster variable. Letters indicate the labels used in the proof.}
        \label{fig:three_ways}
    \end{figure}
    For the standard relation, we have the relation:
    $$\gamma_{bd}\gamma_{ac}=\gamma_{ab}\gamma_{cd}+\gamma_{bc}\gamma_{ad}$$
    Note that all monomials have the same total letters, and thus this relation is satisfied by the horocycle rescaling map. Next, the folded relation.
    $$\gamma_{bc}\cdot\gamma_{ab^{\bowtie}}=\gamma_{ac}+\gamma_{ac} '$$
    where $\gamma_{ac}$ goes below the puncture, and $\gamma_{ac}'$ goes above the puncture. This is balanced in the letters, as $b$ and $b^{\bowtie}$ cancel (i.e. when scaling by $\reclaut$, we will have a term $\zeta$ and $\zeta^{-1}$). Finally, the diagonal flip around a notched puncture.
    $$\gamma_{ab}'\gamma_{ac}'=\gamma_{ab}\gamma_{ac}+\gamma_{ad}\gamma_{ad^{\bowtie}} \gamma_{bc},$$
    where $\gamma_{ab}'$ and $\gamma_{ac}'$ are the arcs which trace around the puncture. We note that, as $d$ and $d^{\bowtie}$ cancel, all terms are labeled $a^2bc$.

    \par Thus, all cluster relations have balanced incidence to each marked point for the three types of cluster relations present on marked surfaces. If we were to identify any marked points in any of these relations, the balanced condition still holds. Thus we have shown that for each arc $\gamma$ on the surface, $B^Tw|_\gamma=0$ for $w$ defined by Definition \ref{def: horocycle_map}. Thus $w\in \ker(B^T)$, meaning the horocycle rescaling map is a cluster automorphism, as desired.
\end{proof}
\par Note that $\reclaut$ fixes $\pt$, as $\pt(\gamma)=0$ on all arcs $\gamma$ incident to $\pun$ by the definition of a $\pt$-essential puncture. Thus $\pt$ can not be a mysterious point, finishing the proof of Proposition \ref{prop: essential}.
\par Finally, we need to show Proposition \ref{prop: cuttable}. To do this, we will need a few lemmas. The main tool in this section will be cutting along arcs $\gamma$ with $\pt(\gamma)\ne0$.
\begin{lemma}
    Given a surface $\surf$ and an arc $\gamma$, then if $\surf'$ is the surface given by cutting along $\gamma$, then the variety $\X_{\surf'}|_{\gamma_1=\gamma_2}$ where $\gamma_1$ and $\gamma_2$ are the images of the cut, includes into $\X_\surf$ as a cluster chart as long as $\X_\surf$ is locally acyclic. \label{lem:local_chart}
\end{lemma}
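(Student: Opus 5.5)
The plan is to reduce the statement to Muller's freezing result (the Proposition quoted from \cite{Muller2013LocallyAcyclic}, Lemma 3.4) together with the surface-type description of freezing. First, choose a tagged triangulation $\Delta$ of $\surf$ containing $\gamma$; such a triangulation exists because any tagged arc extends to a maximal compatible collection. Freezing the vertex $\gamma$ in $Q_\Delta$ produces a cluster algebra $\A'$. Since $\X_\surf$ is locally acyclic, that proposition gives $\A'=\A_\surf[x_\gamma^{-1}]$. Hence $\operatorname{Spec}(\A')$ is the principal open subset $\{x_\gamma\neq 0\}\subset\X_\surf$, and this is the cluster chart we want.

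Second, identify $\A'$ with the algebra attached to the cut surface. Cutting $\surf$ along $\gamma$ gives $\surf'$, in which $\gamma$ becomes two boundary segments $\gamma_1,\gamma_2$. These are frozen variables of $\A_{\surf'}$. The triangulation $\Delta\setminus\{\gamma\}$ induces a triangulation $\Delta'$ of $\surf'$. By Definition \ref{def:quiv}, the quiver $Q_{\Delta'}$ agrees with $Q_\Delta$ with $\gamma$ frozen, except that the single frozen vertex $\gamma$ is split into two frozen vertices $\gamma_1,\gamma_2$, and the arrows at $\gamma$ are distributed between them according to the two sides of $\gamma$. Indeed, the arrows at $\gamma$ come from the two triangles containing $\gamma$, and after cutting each of these triangles borders exactly one of $\gamma_1,\gamma_2$. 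Consequently, imposing the relation $\gamma_1=\gamma_2$ (that is, passing to the quotient $\A_{\surf'}/(\gamma_1-\gamma_2)$) sends the exchange relations of $Q_{\Delta'}$ exactly to those of $Q_\Delta$ with $\gamma$ frozen. This identification is compatible with mutation, because the mutable parts coincide and no mutation ever touches frozen vertices. It therefore gives a surjection $\A_{\surf'}\to\A'$ sending $\gamma_1,\gamma_2\mapsto x_\gamma$, whose kernel contains $\gamma_1-\gamma_2$. Geometrically, this map is the closed embedding $\X'\hookrightarrow\X_{\surf'}$ with image $\X_{\surf'}|_{\gamma_1=\gamma_2}$.

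The main obstacle is showing that this kernel is exactly $(\gamma_1-\gamma_2)$. Equivalently, one must show that $\A_{\surf'}/(\gamma_1-\gamma_2)$ has no extra relations, so that the closed embedding above is an isomorphism onto $\operatorname{Spec}(\A')$. I would handle this using local acyclicity as follows. The surface $\surf'$ (each component of it) remains locally acyclic: cutting adds boundary marked points and keeps every component with marked points, and Muller's criterion applies. So both $\A_{\surf'}$ and $\A'$ are generated, on acyclic covering charts, by cluster variables of acyclic seeds with explicit presentations (the lower bound equals the upper bound). Berenstein--Fomin--Zelevinsky's presentation of an acyclic cluster algebra is by the exchange relations of the initial seed. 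Identifying two frozen variables in such a presentation yields exactly the presentation for the quiver with merged frozen vertices. This holds chart by chart, and the charts glue because they are again defined by freezing.

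If this step resists, an alternative is to use Proposition \ref{prop:RFR}-style arguments: the quivers differ only in frozen data, so one can compare the two algebras after adjoining a really full rank frozen part. Finally, the inclusion $\X_{\surf'}|_{\gamma_1=\gamma_2}\cong\operatorname{Spec}(\A')\hookrightarrow\X_\surf$ is open with image $\{\gamma\neq0\}$, and that completes the proof.
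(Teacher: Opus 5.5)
Your top-level route is the paper's. The paper's proof asserts that $\X_{\surf'}|_{\gamma_1=\gamma_2}$ is the spectrum of the freezing of $\A_\surf$ at $\gamma$, and then invokes \cite{Muller2013LocallyAcyclic}, Lemma 3.4 (with a pointer to \cite[Proposition 6.12]{BM}). It gives no argument for that identification, which you correctly single out as the real content.

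\textbf{The gap.} The argument you supply for the identification rests on a claim that is false as written. Setting $\gamma_1=\gamma_2=x_\gamma$ does \emph{not} carry the exchange relations of $Q_{\Delta'}$ exactly onto those of $Q_\Delta$ with $\gamma$ frozen. The specialized exchange binomials carry exponents $[b_{\gamma_1k}]_+ + [b_{\gamma_2k}]_+$, whereas the freezing's carry $[b_{\gamma_1k}+b_{\gamma_2k}]_+$. So whenever a mutable arc has oppositely oriented arrows to $\gamma_1$ and $\gamma_2$ (which cancel in $Q_\Delta$), the specialized relation picks up an extra common factor $x_\gamma^m$.

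Concretely, take the once-punctured digon with boundary segments $e_1,e_2$ and $\gamma=ap$. Cutting along $\gamma$ gives the quadrilateral $a_2,b,a_1,p$, with relation $x_{bp}x_{a_1a_2}=\gamma_1e_1+\gamma_2e_2$. This specializes to $x_{bp}x_{a_1a_2}=x_\gamma(e_1+e_2)$, while the freezing has $x_{bp}x_{ap^{\bowtie}}=e_1+e_2$. Hence $x_{a_1a_2}\mapsto x_\gamma x_{ap^{\bowtie}}$, which is not a cluster variable of $\A'$.

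Your justification ``no mutation ever touches frozen vertices'' is also wrong. Mutation changes the frozen rows via $b'_{ij}=b_{ij}+[b_{ik}]_+b_{kj}+b_{ik}[-b_{kj}]_+$. This is not linear in $b_{ik}$, so merging the $\gamma_1,\gamma_2$ rows does not commute with mutation. The same ``exactly'' error reappears in your kernel step, where you claim that identifying the two frozen variables in a BFZ presentation gives the merged presentation on the nose.

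\textbf{How to repair it.} The fix is to work up to units.

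\emph{Surjectivity.} Under the specialization $\C[x_{\Delta'}^{\pm1}]\to\C[x_\Delta^{\pm1}]$, each cluster variable of $\surf'$ goes to the corresponding cluster variable of $\A'$ times a Laurent monomial in $x_\gamma$. One way to see this is the Fomin--Zelevinsky separation formula: $F$-polynomials and $g$-vectors depend only on the common mutable part, the $\hat y$'s specialize to the $\hat y$'s, and only the tropical denominators can differ. Since $x_\gamma^{\pm1}\in\A'$, you still get a surjection $\phi:\A_{\surf'}/(\gamma_1-\gamma_2)\to\A'$.

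\emph{Injectivity.} Let $\A_{\surf'}[s_i^{-1}]$ be acyclic cluster localizations covering $\X_{\surf'}$. After specialization, the BFZ relations \cite{BFZ} of each chart read $x_kx_k'=x_\gamma^{c_k}\tilde P_k$. Rescaling the generators by powers of $x_\gamma$ turns these into the BFZ presentation of the corresponding acyclic freezing of $\A'$, and that freezing embeds in $\A'[\phi(s_i)^{-1}]$. So $\phi$ is injective after inverting each $s_i$. Since the $s_i$ generate the unit ideal, $\phi$ is injective. This is the precise content of your ``the charts glue''.

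\textbf{Local acyclicity of $\surf'$.} This does not follow from Muller's criterion in general. That criterion needs two boundary marked points in every component, and cutting along a loop creates a boundary circle with a single marked point. It is fine for the puncture-to-boundary cuts the paper actually uses.
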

\begin{proof}
    The variety $\X_{\surf'}|_{\gamma_1=\gamma_2}$ is isomorphic to the variety over the cluster algebra given by freezing at $\gamma$. This result then becomes a restatement of \cite{Muller2013LocallyAcyclic}, Lemma 3.4. This is also a result of \cite[Proposition 6.12]{BM}.
\end{proof}

Next, we will work on constructing convenient curves $\gamma$ to cut along.
\begin{lemma}
    (\cite{ASS12}, Lemma 4.9) For $\pun\in\puns$, the map $\taginv:\A_\surf\to\A_\surf$ which swaps the taggings at $\pun$ is an algebra automorphism.
\end{lemma}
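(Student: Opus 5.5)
The plan is to define $\taginv$ on the generators of $\A_\surf$ and check that it preserves every relation among them. By Lemma \ref{lem:clvars}, the generators are the tagged arcs. Let $\taginv$ send a tagged arc $\gamma$ to the arc $\gamma^{\bowtie}$ obtained by reversing the tagging at each end of $\gamma$ lying at $\pun$; arcs not incident to $\pun$ are fixed. This map is clearly an involution on the set of tagged arcs. It also preserves compatibility:
\begin{itemize}
\item two arcs meeting at $\pun$ with matching tags still match after both are swapped;
\item a pair of homotopic arcs differing in tag at exactly one end still differs at exactly one end;
\item the conventions for loops (equal tags at both ends) are preserved.
\end{itemize}
Hence $\taginv$ sends tagged triangulations to tagged triangulations, and commutes with flips: if $\Delta'=\mu_\gamma\Delta$ then $\taginv(\Delta')=\mu_{\taginv\gamma}\taginv(\Delta)$.

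The key step is to show that the associated quivers agree, that is, $Q_{\taginv(\Delta)}=Q_\Delta$ under the bijection $\gamma\mapsto\gamma^{\bowtie}$. This follows from Definition \ref{def:quiv}. The arrows are determined by the cyclic order of arcs around each marked point. Tag-swapping does not move the underlying curves, and homotopic arcs with different tags are identified in that step. So the cyclic orders, and therefore the arrows, are unchanged. Consequently, for a fixed triangulation $\Delta$ with cluster $\vec x=(x_\gamma)$, the assignment $x_\gamma\mapsto x_{\gamma^{\bowtie}}$ is an isomorphism of seeds. By mutation-compatibility it then sends each seed to a seed with the same exchange matrix.

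To turn this into an algebra map, I would use the standard fact that a bijection between two seeds with identical exchange matrices induces an isomorphism of the corresponding cluster algebras. This isomorphism intertwines mutations, since the exchange relations are built from $B$ alone. Concretely, define $\Phi:\C(\vec x)\to\C(\vec x)$ on the field of fractions by $x_\gamma\mapsto x_{\gamma^{\bowtie}}$, where the right-hand side is written as a Laurent polynomial in the cluster $\vec x$. This is well defined because $\taginv(\Delta)$ is also a cluster, so its variables are algebraically independent generators of the same field. Induction along the mutation tree shows that $\Phi$ sends the cluster variable indexed by any tagged arc $\beta$ to the cluster variable indexed by $\beta^{\bowtie}$. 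Hence $\Phi$ restricts to an automorphism of $\A_\surf$, and it is an involution.

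The step I expect to be the main obstacle is the exceptional case of Lemma \ref{lem:clvars}: when $|\puns|=1$ and $\partial\surf=\emptyset$, only plain arcs are cluster variables. There notched arcs are not cluster variables, so tag-swapping does not act on the exchange graph. I would either exclude this case, which is not locally acyclic anyway, or treat it separately. A second delicate point is self-folded triangles. There the quiver is defined with tagged arcs, and one must check that the rule "treat homotopic arcs differing in tag as the same curve" is symmetric under swapping. I would verify this directly on the local configuration of a notched/plain pair at $\pun$, where the swap simply exchanges the two arcs of the pair.
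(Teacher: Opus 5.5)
Your proposal is correct and follows the same route as the paper: swapping tags at $\pun$ leaves the quiver of every tagged triangulation unchanged, so the induced seed isomorphism extends to a cluster automorphism of $\A_\surf$. You add the details the paper's one-line proof omits, namely that compatibility and flips are preserved, the check for plain--notched pairs, and the explicit map on the fraction field; you also rightly flag the once-punctured closed surface, where notched arcs are not cluster variables and the paper's argument does not literally apply.
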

\begin{proof}
    Reversing the taggings at $\pun$ in any seed does not change the quiver. Since this map is an isomorphism on quivers, it lifts to a cluster isomorphism on the level of cluster algebras, and thus is an algebra automorphism.
\end{proof}
\begin{corollary}
    Given $\surf$ and a deep point $\pt$, if $\pt'= \pt\circ \taginv$ admits a cluster automorphism $\claut'\in\Aut(\A_\surf)$ which fixes $\pt'$, then $\claut = \taginv\circ \claut'\circ(\taginv)^{-1}$ is a cluster automorphism which fixes $\pt$.\label{cor: tag_rem}
\end{corollary}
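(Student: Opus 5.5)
The plan is a direct conjugation argument: check that $\claut$ rescales every cluster variable, and that it fixes $\pt$.

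\textbf{Step 1: $\claut$ is a cluster automorphism.} By the preceding lemma, $\taginv$ is an algebra automorphism of $\A_\surf$, and it is an involution. On generators it sends the cluster variable of a tagged arc $\gamma$ to the cluster variable of the arc $\gamma^{\bowtie}$, obtained from $\gamma$ by swapping the taggings at $\pun$. Since swapping taggings preserves compatibility, $\taginv$ sends tagged triangulations to tagged triangulations, i.e.\ seeds to seeds. Now take any arc $\gamma$. We have $(\taginv)^{-1}(\gamma)=\gamma^{\bowtie}$. Because $\claut'$ is a cluster automorphism, $\claut'(\gamma^{\bowtie})=\zeta\gamma^{\bowtie}$ for some $\zeta\in\C^\times$. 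Applying $\taginv$ then gives $\claut(\gamma)=\zeta\gamma$. Hence $\claut$ is an algebra automorphism (a composition of automorphisms) that rescales every cluster variable, which is exactly the definition of a cluster automorphism. Frozen variables are boundary arcs, which are fixed by $\taginv$, so they are handled identically.

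\textbf{Step 2: $\claut$ fixes $\pt$.} Interpret points as ring homomorphisms $\A_\surf\to\C$, with the group acting by precomposition. The hypothesis says $\pt'\circ\claut'=\pt'$, where $\pt'=\pt\circ\taginv$. We compute
\[\pt\circ\claut=\pt\circ\taginv\circ\claut'\circ(\taginv)^{-1}=\pt'\circ\claut'\circ(\taginv)^{-1}=\pt'\circ(\taginv)^{-1}=\pt.\]
The same computation works if the action is composed on the other side, since only associativity is used.

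\textbf{Step 3: nontriviality is preserved.} If $\claut'$ is not the identity, then neither is its conjugate $\claut$. So a nontrivial stabilizer of $\pt'$ yields a nontrivial stabilizer of $\pt$. In the intended application this means $\pt$ is not mysterious whenever $\pt'$ is not. Note also that $\pt'$ is deep exactly when $\pt$ is, because $\taginv$ permutes clusters.

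\textbf{Main subtlety.} The only point needing care is that $\taginv$ genuinely maps cluster variables to cluster variables. In the exceptional cases of Lemma \ref{lem:clvars}, only plain arcs occur, and there one should interpret $\taginv$ through the quiver isomorphism used in its proof. Even so, the conjugation argument goes through verbatim, because any automorphism of $\A_\surf$ that permutes cluster variables conjugates cluster automorphisms to cluster automorphisms.
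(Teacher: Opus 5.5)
Your proposal is correct and follows the paper's proof essentially step for step. The fixed-point part is the same precomposition computation $\pt\circ\claut=\pt'\circ\claut'\circ(\taginv)^{-1}=\pt'\circ(\taginv)^{-1}=\pt$. The rescaling part is the same check: $(\taginv)^{-1}(\gamma)=\gamma^{\bowtie}$, then $\claut'(\gamma^{\bowtie})=\zeta\gamma^{\bowtie}$, then applying $\taginv$ gives $\zeta\gamma$. Your Step 3 (conjugation preserves nontriviality) is not in the paper's proof, but it is correct and is what the application to mysterious points actually uses.
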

\begin{proof}
    $\claut$ fixes $\pt$ via the following calculation. Note $\pt = \pt'\circ (\taginv)^{-1}$
    $$\claut(\pt)=\pt\circ\vp$$
    $$\claut(\pt)=\pt'\circ (\taginv)^{-1}\circ\taginv\circ \claut'\circ(\taginv)^{-1}$$
    $$\claut(\pt)=\pt'\circ  \claut'\circ(\taginv)^{-1}$$
    Now, we use that $\claut'$ fixes $\pt'$
    $$\claut(\pt)=\pt'\circ(\taginv)^{-1}=\pt$$
    Thus $\claut$ fixes $\pt$. It now suffices to show that $\claut$ rescales cluster variables. Let $\gamma$ be a cluster variable.
    $$\claut(\gamma)=\taginv\circ \claut'\circ(\taginv)^{-1}(\gamma)$$
    Note that if $\gamma$ is a cluster variable, then $(\taginv)^{-1}(\gamma)$ is a cluster variable, $\gamma'$ where $\gamma'$ has the tag at $\pun$ inverted (if the boundary of $\gamma$ includes $\pun$, otherwise, $\gamma'=\gamma$).  
    $$\claut(\gamma)=\taginv\circ \claut'(\gamma')=\taginv(\zeta\gamma')$$
    Note that $\taginv$ flips the tag of $\gamma'$ back.
    $$\claut(\gamma)=\zeta\gamma$$
    Thus $\claut$ rescales the cluster variables. Thus $\claut$ is a cluster automorphism which fixes $\pt$, as desired.
    
\end{proof}
The corollary allows us to convert tags from notched to plain at a puncture.
\begin{lemma}
    \label{lem: bndy_cut} If $\pun$ is not $\pt$-essential, then there exists some curve $\gamma$ connecting $\pun$ to the boundary with $\pt(\gamma)\ne 0$. Moreover, this can be chosen to avoid any closed subset $D$ so long as $\surf\smallsetminus D$ is connected and contains at least 1 boundary point. \label{lem: non_zero_bndy_arc}
\end{lemma}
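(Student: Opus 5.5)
Since $\pun$ is not $\pt$-essential, there is a tagged arc $\delta$ with $\pun\in\partial\delta$ and $\pt(\delta)\ne0$. By Corollary \ref{cor: tag_rem}, replacing $\pt$ by $\pt\circ\taginv$ if necessary, we may assume $\delta$ is plain at $\pun$. My plan is to transport the nonvanishing of $\pt$ along $\delta$ step by step, producing arcs that end closer to the boundary, until we reach an arc from $\pun$ to a boundary marked point. The only tool I expect to need is the exchange relations of Figure \ref{fig:three_ways}. If a relation $\gamma\gamma'=M_1+M_2$ holds and $\pt(M_1)\ne\pt(-M_2)$, then some factor of $M_1$ or $M_2$ is nonzero at $\pt$. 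Dually, if $\pt(\gamma)\pt(\gamma')\neq0$, then not both monomials on the right can vanish.

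In detail, let $q$ be the other endpoint of $\delta$. If $q\in\bns$ we are done. Otherwise, fix a path $\beta$ in $\surf\smallsetminus D$ from $q$ to a boundary marked point $\bn$. This is possible because $\surf\smallsetminus D$ is connected and contains a boundary point; we first arrange $\delta$ to lie in $\surf\smallsetminus D$ by the same procedure, or choose $\delta$ accordingly. The first induction is on the number of marked points that $\beta$ passes through, or on the length of a chain of arcs $q=q_0,q_1,\dots,q_k=\bn$ in $\surf\smallsetminus D$ running along $\beta$. At each step we have $\pt(\gamma_{\pun q_i})\ne0$, and we consider a quadrilateral with vertices $\pun,q_i,q_{i+1}$ and a fourth point $r$ chosen so that $\gamma_{\pun q_{i+1}}$ is one diagonal and $\gamma_{q_i r}$ the other. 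The Ptolemy relation reads
$$\gamma_{\pun q_{i+1}}\gamma_{q_i r}=\gamma_{\pun q_i}\gamma_{q_{i+1}r}+\gamma_{\pun r}\gamma_{q_iq_{i+1}}.$$
The point of choosing $r$ freely is that $r$ can be any marked point, including $\pun$ itself via a loop or the folded and notched relations, together with the other arcs near $\gamma_{\pun q_i}$. This gives several relations that all involve $\gamma_{\pun q_i}$. If every arc from $\pun$ to $q_{i+1}$ and to the candidate points $r$ vanished at $\pt$, these relations would force $\pt(\gamma_{\pun q_i})\pt(\gamma_{q_{i+1}r})=0$ for all admissible $r$. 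I would then derive a contradiction by choosing $r$ so that $\gamma_{q_{i+1}r}$ is itself forced nonzero: around the point $q_{i+1}$, the arcs in a triangulation containing $\gamma_{\pun q_i}$ cannot all vanish, because $\pt$ is deep only in the weak sense that each cluster contains some zero. Alternatively, if the induction gets stuck at $q_{i+1}$, we replace the target by a nonvanishing arc from $\pun$ to some $r$ that is strictly closer to the boundary along $\beta$.

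The main obstacle is this inductive step: guaranteeing that some arc from $\pun$ to a point nearer the boundary, and lying in $\surf\smallsetminus D$, is nonzero at $\pt$, when the naive Ptolemy relation only shows that one of two products is nonzero. I would first try to handle it by also using the notched variants. The relation $\gamma_{bc}\gamma_{ab^{\bowtie}}=\gamma_{ac}+\gamma'_{ac}$ and the notched flip both relate arcs at $\pun$ to arcs avoiding $\pun$, and we may take whichever tag Corollary \ref{cor: tag_rem} permits. If direct propagation fails, the fallback is to cut: by Lemma \ref{lem:local_chart}, an arc $\eta$ avoiding $\pun$ with $\pt(\eta)\ne0$ lets us pass to the cut surface. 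There, the induction on the complexity of $\surf$ (genus, number of punctures and number of boundary components) puts $\pun$ closer to the boundary, while the connectivity of the complement of $D$ is preserved by cutting along arcs inside $\surf\smallsetminus D$.
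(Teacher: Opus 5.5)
Your proposal has a genuine gap: the inductive step that you yourself flag as the main obstacle is never closed, and the suggested fixes do not work.

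\begin{itemize}
\item \emph{The deepness argument fails.} You justify the claim that some $\gamma_{q_{i+1}r}$ is ``forced nonzero'' by saying $\pt$ is deep. Deepness only supplies zeros (one in each cluster), never a nonvanishing arc, and the lemma does not even assume $\pt$ is deep.
\item \emph{Even a nonvanishing arc would not suffice.} If some arc at $q_{i+1}$ were nonzero, it need not sit in the side position $q_{i+1}r$ of a quadrilateral of the required shape. Moreover, if the relation then gives $\pt(\gamma_{\pun r})\neq0$ instead of $\pt(\gamma_{\pun q_{i+1}})\neq0$, you have not moved closer to the boundary.
\item \emph{The cutting fallback is unsupported.} Nothing produces the arc $\eta$ avoiding $\pun$ with $\pt(\eta)\neq0$. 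Cutting along it may destroy $\delta$. The complexity induction is never set up.
\item \emph{Choosing $\delta$ inside $\surf\smallsetminus D$ is circular.} ``Arrange $\delta$ to lie in $\surf\smallsetminus D$'' is essentially what the lemma asks you to prove.
\end{itemize}

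The missing observation is that you never need to walk along a path $\beta$. Take your own Ptolemy relation with $q_i=q$, and with $q_{i+1}=\bn_2$, $r=\bn_1$ two consecutive marked points on a boundary component. Then $\gamma_{q_{i+1}r}=f$ is a boundary segment, i.e.\ a frozen variable. Frozen variables are inverted in $\A_\surf$, so $f$ is nonzero at every point, and you get this for free rather than from deepness.

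Concretely:
\begin{itemize}
\item Choose $\gamma_1$ from $\pun$ to $\bn_1$ inside $\surf\smallsetminus D$.
\item Let $\gamma_2$ follow $\gamma_1$ and then the boundary to $\bn_2$.
\item Let $\gamma_3$ follow $\delta$ and then $\gamma_2$.
\end{itemize}
Then $\gamma_1,f,\gamma_3,\delta$ bound a quadrilateral with diagonals $\gamma_2$ and $\gamma_2'$ (the latter from $\bn_1$ to $q$), and
\[
\gamma_2\gamma_2'=\gamma_1\gamma_3+f\delta .
\]
Since $\pt(f\delta)\neq0$, the products $\gamma_2\gamma_2'$ and $\gamma_1\gamma_3$ cannot both vanish at $\pt$. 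Hence $\pt(\gamma_2)\neq0$ or $\pt(\gamma_1)\neq0$, and both arcs join $\pun$ to the boundary inside $\surf\smallsetminus D$.

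Only these two candidate arcs need to avoid $D$, and they are built independently of $\delta$. The arcs $\delta$, $\gamma_3$, $\gamma_2'$ may cross $D$ harmlessly, which removes the circularity above. This one-step argument is the paper's proof.
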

\begin{figure}
    \centering
    \begin{tikzpicture}[line width = 1.5pt]
        \draw (-7,0) .. controls (0, 3) .. (7,0);
        \node (f) at (0,2.7) {$f$};
        \fill (-3, 1.64) circle [radius = 0.1];
        \fill (3, 1.64) circle [radius = 0.1];
        \node (a) at (-3, 2) {$\bn_1$};
        \node (b) at (3, 2) {$\bn_2$};
        \draw[dashed][color=orange] (-3, 1.64) -- (0,1.64-4);
        \draw[dashed][color=orange] (3, 1.64) -- (0,1.64-4);
        \fill[orange, fill opacity = 0.2] (-3, 1.64) -- (-0,1.64-4) -- (-7,1.64-4) -- (-7,0)--cycle;
        \fill[orange, fill opacity = 0.2] (3, 1.64) -- (0,1.64-4)-- (7,1.64-4) -- (7,0)--cycle;

        \fill (0,0) circle [radius = 0.1];
        \node (q) at (0,-0.5) {$\pun$};
        \draw (0,0) to [edge label=$\gamma_1$] (-3, 1.64);
        \draw (0,0) to [edge label' = $\gamma_\pun$] (6.5,-1.5);
        \draw (0,0) .. controls (-3, 1.9) .. (3, 1.64);
        \node (c) at (-1, 1.2) {$\gamma_2$};
        \draw (6.5,-1.5) .. controls (0,0) and (-3, 1.9).. (3, 1.64);
        \node (d) at (0.5, 1.2) {$\gamma_3$};

    \end{tikzpicture}
    \caption{The Orange region represents $D$. Note that the marked boundary points can be on the boundary of $D$.}
    \label{fig: cut_to_bndy}
\end{figure}
\begin{proof}
    Via Figure \ref{fig: cut_to_bndy}, we can construct an arc $\gamma_1$ from $\pun$ to the boundary $b_1$ not in $D$, we then take this arc and trace along it followed by the boundary to the next point $b_2$(possibly the same point), not in $D$, creating the arc $\gamma_2$. Note $\gamma_1\ne\gamma_2$ and they are compatible arcs. Finally, we take the arc $\gamma_\pun$ with $\pt(\gamma_\pun)\ne0$ given by assuming $\pun$ is not $\pt$-essential, and the arc $\gamma_3$ will be the arc obtained by following $\gamma_\pun$ then $\gamma_2$. Now we note that if $f$ is the arc connecting $b_1$ with $b_2$, then $(\gamma_1,f,\gamma_3,\gamma_\pun)$ is a quadrilateral, with diagonal $\gamma_2$. Mutating at $\gamma_2$ gives $$\gamma_2\gamma_2'=\gamma_1\cdot\gamma_3+f\cdot\gamma_\pun$$
    Noting that $\pt(f\cdot\gamma_\pun)\ne0$, we must have that either $\pt(\gamma_1)\ne0$ or $\pt(\gamma_2)\ne0$. As both $\gamma_1$ and $\gamma_2$ avoid $D$ and connect $\pun$ to the boundary, this completes the proof.
\end{proof}
\begin{lemma}
    Given $\pt\in\X_\surf$, if every puncture (minimum 1) $\pun\in\surf$ is not $\pt$-essential, there exists a collection of mutually compatible arcs $\gamma_\pun$ indexed by punctures with $\pt(\gamma_\pun)\ne 0$, such that, all punctures are removed by cutting along each arc in the collection, and such a cutting does not disconnect the surface. \label{lem: cuts}
\end{lemma}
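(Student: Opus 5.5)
We argue by induction on the number of punctures, choosing the arcs one at a time and using Lemma \ref{lem: non_zero_bndy_arc} at each step, with the already-chosen arcs playing the role of the closed set $D$.

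Order the punctures $\pun_1,\dots,\pun_k$. Suppose arcs $\gamma_{\pun_1},\dots,\gamma_{\pun_{j-1}}$ have been chosen. They are mutually compatible, each joins its puncture to a boundary marked point, and $\pt(\gamma_{\pun_i})\ne0$ for all $i<j$. Let $D_j$ be a small closed regular neighbourhood of their union, taken so that it does not contain $\pun_j,\dots,\pun_k$. We will keep the chosen arcs pairwise non-crossing, so each is a simple arc from an interior point to the boundary. Hence $D_j$ is a disjoint union of closed discs, each meeting $\partial\surf$ in a single segment. Removing such discs does not disconnect $\surf$: each is a collar attached along one boundary segment, so the complement deformation retracts onto $\surf$ minus small boundary segments. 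The complement also still contains boundary points, since a boundary component cannot be swallowed by finitely many thin segments.

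Since $\pun_j$ is not $\pt$-essential, Lemma \ref{lem: non_zero_bndy_arc} applies with $D=D_j$. It gives an arc $\gamma_{\pun_j}$ from $\pun_j$ to the boundary with $\pt(\gamma_{\pun_j})\ne0$, avoiding $D_j$. If necessary we may enlarge $D_j$ to include small discs around the later punctures $\pun_{j+1},\dots,\pun_k$, so that the new arc also does not pass through later punctures; this is automatic anyway for a tagged arc. The endpoint issue needs care. The Lemma permits marked boundary points to lie on $\partial D$, so $\gamma_{\pun_j}$ may end at the same boundary marked point as an earlier arc. It then meets that arc only at the shared endpoint, which is allowed for compatibility. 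Its interior is disjoint from all earlier arcs, so the arcs are compatible. If the produced arc is notched at $\pun_j$, then $\pun_j$ is its only puncture endpoint and no other chosen arc touches $\pun_j$. We can use Corollary \ref{cor: tag_rem}, or simply accept the notched arc, since compatibility of disjoint arcs does not depend on tags.

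Finally, cutting along a simple arc from an interior puncture to the boundary opens the puncture into a boundary segment. The result is a connected surface: the puncture becomes two marked boundary points on an enlarged boundary component, and connectivity holds because the arc's neighbourhood is a disc attached along a single boundary segment. Cutting all $k$ arcs therefore removes every puncture and leaves the surface connected.

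The main obstacle is verifying the hypotheses of Lemma \ref{lem: non_zero_bndy_arc} at each stage, namely that $\surf\smallsetminus D_j$ is connected and meets the boundary. This rests on the topological fact that the union of pairwise interior-disjoint puncture-to-boundary arcs is a forest of trees, each attached to $\partial\surf$ at a single point. Its regular neighbourhood therefore cannot separate $\surf$.
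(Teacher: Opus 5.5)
Your overall strategy matches the paper's inductive step: apply Lemma \ref{lem: bndy_cut} once per puncture, with $D$ built from the previously chosen arcs, so each new arc is disjoint from, hence compatible with, the old ones. The chosen arcs then form stars attached to $\partial\surf$ at single points, so cutting along them does not separate. What is missing is the input that lets the first step happen: you never show $\partial\surf\neq\emptyset$. The standing assumptions allow closed punctured surfaces (e.g.\ the once-punctured torus or the four-punctured sphere), and there your construction cannot start. At $j=1$ you have $D=\emptyset$, Lemma \ref{lem: bndy_cut} needs a boundary point in $\surf\smallsetminus D$, and no puncture-to-boundary arcs exist at all. The paper gets a boundary component with a marked point from the standing local acyclicity hypothesis, via the results of Muller it cites. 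Your proof needs the same input, since it is not among the lemma's literal hypotheses.

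Second, taking $D_j$ to be a regular neighbourhood of the earlier arcs contradicts your own endpoint discussion. Such a neighbourhood contains a full half-disc around each boundary endpoint $\bn$ of an earlier arc. So $\bn$ lies in the interior of $D_j$ relative to $\surf$, and no new arc can end at $\bn$ while avoiding $D_j$. When marked points are scarce this breaks the induction. In the twice-punctured monogon (type $D_4$, hence locally acyclic), after the first arc the only marked point lies inside $D_2$. Yet $\surf\smallsetminus D_2$ is still connected and still meets $\partial\surf$, so the lemma's hypotheses hold while its conclusion is impossible. The fix is to take $D_j$ to be the union of the chosen arcs themselves, as the paper does. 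Then used endpoints lie on $\partial D$, which Lemma \ref{lem: bndy_cut} is meant to allow, and the complement is still connected. The paper also makes a preliminary cut of one puncture to the boundary, producing a component with three marked points, before inducting. Your choice to stay in $\surf$ throughout has the advantage that you never need to re-check non-essentiality of the remaining punctures on a cut surface.
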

\begin{proof}

First, if the surface does not have a boundary with at least 2 marked points, note that there exists a boundary with at least one marked point due to the locally acyclic condition \cite[Theorem 10.9, Theorem 10.10]{Muller2013LocallyAcyclic}. Next take any puncture $\pun$, note it is not $\pt$-essential, so it admits an arc $\gamma_\pun$ with $\pt(\gamma_\pun)\ne0$. Now, we can use Lemma \ref{lem: non_zero_bndy_arc} to find $\gamma_\pun$ with $\pt(\gamma_\pun)\ne0$ and $\gamma_\pun$ connects $\pun$ to the boundary. We can take $\gamma_\pun$ to be plain at $\pun$ via Corollary \ref{cor: tag_rem}. Cutting along $\gamma_\pun$ gives us a boundary with 3 marked points. Note that if the new surface $\surf'$ admits the desired collection $\Gamma$, then the original surface $\surf$ admits $\Gamma\cup\gamma_\pun$, as $\gamma_\pun$ is a boundary in $\surf'$, all arcs in $\Gamma$ will have images in $\surf$ compatible with $\gamma_\pun$.

\begin{figure}[!h]
    \centering
    \begin{tikzpicture}[line width = 2pt]
        \draw (-7,0) .. controls (0, 3) .. (7,0);
        \node (f) at (0,2.7) {$f$};
        \fill (-3, 1.64) circle [radius = 0.1];
        \fill (3, 1.64) circle [radius = 0.1];
        \node (a) at (-3, 2) {$\bn_1$};
        \node (b) at (3, 2) {$\bn_2$};
        \draw[dashed][color=orange] (-3, 1.64) -- (-3,1.64-4);
        \fill (-4,0) circle [radius = 0.1];
        \fill (-4,-2) circle [radius = 0.1];
        \fill (-6,0) circle [radius = 0.1];
        \draw[dashed] (-3, 1.64) -- (-4,0);
        \draw[dashed] (-3, 1.64) -- (-4,-2);
        \draw[dashed] (-3, 1.64) -- (-6,0);
       
        \draw[dashed][color=orange] (3, 1.64) -- (3,1.64-4);
        \fill (4,0) circle [radius = 0.1];
        \fill (4,-2) circle [radius = 0.1];
        \fill (6,0) circle [radius = 0.1];
        \draw[dashed] (3, 1.64) -- (4,0);
        \draw[dashed] (3, 1.64) -- (4,-2);
        \draw[dashed] (3, 1.64) -- (6,0);

        \fill[orange, fill opacity = 0.2] (-3, 1.64) -- (-3,1.64-4) -- (-7,1.64-4) -- (-7,0)--cycle;
        \fill[orange, fill opacity = 0.2] (3, 1.64) -- (3,1.64-4)-- (7,1.64-4) -- (7,0)--cycle;

        \fill (0,0) circle [radius = 0.1];
        \node (q) at (0,-0.5) {$\pun$};
        \draw (0,0) to [edge label=$\gamma_1$] (-3, 1.64);
        \draw (0,0) to [edge label'=$\gamma_2$] (3, 1.64);

    \end{tikzpicture}
    \caption{The inductive step. Dashed black arcs represent the arcs previously chosen. The orange dashed lines indicate the boundary of $D$. By Lemma \ref{lem: bndy_cut}, we have either $\pt(\gamma_1)\ne0$ or $\pt(\gamma_2)\ne0$.}
    \label{fig: inductive_step}
\end{figure}
\par Now, assuming that there is a boundary with at least 2 marked points, we will use Lemma \ref{lem: bndy_cut} repeatedly, inducting on the number of punctures, adding the newly created arcs to the set $D$. Visually, this step is done as in Figure \ref{fig: inductive_step}. Noting by induction that all arcs in $D$ are mutually compatible, the new arc is compatible with all previous arcs by noting that it does not intersect $D$. Thus by induction, we have a set of mutually compatible arcs $\gamma_\pun$ for each $\pun \in\puns$ with $\pt(\gamma_\pun)\ne 0$, as desired. Furthermore, as no arc alone disconnects the surface, the collection will not by induction, and as each cut cuts a puncture to the boundary, after cutting along all arcs, no punctures remain, thus completing the proof.
\end{proof}

\begin{figure}
    \centering
    
\begin{tikzpicture}[line width = 1.5pt,scale=2]
    \def\width{3}
    \def\Rout{1}   
    \def\Rin{0.5}  
    \node[draw, circle, scale = 0.4, red, fill= red, outer sep = 5pt](A1) at (0,-0.5*\Rin-0.5*\Rout) { };
    \draw[black] (0,0) circle (\Rout);
    \draw[black] (0,0) circle (\Rin);

    \filldraw[black] (0,-\Rout) circle (0.015);
    \filldraw[black] (0,-\Rin) circle (0.015);

    \draw[black] (0,-\Rout) -- (0,-\Rin);

    \draw[blue, dashed, line width=1.5pt, rounded corners=8pt]
        (0,-\Rin) 
        .. controls (-0.05, -0.5) and (-0.6, -0.7) ..
        (-0.85,0)
        .. controls (-0.6,1.0) and (0.6,1.0) ..
        (0.85,0)
        .. controls (0.6,-0.6) and (0.05,-0.5) ..
        (0,-\Rout);
    \node[draw, circle, scale = 0.4, red, fill= red, outer sep = 5pt](A1) at (0,-0.5*\Rin-0.5*\Rout) { };
    \node[draw, circle, scale = 0.4, red, fill= red, outer sep = 5pt](A2) at (0.5*\Rin+0.5*\Rout,-.25) { };
    \node[draw, circle, scale = 0.4, red, fill= red, outer sep = 5pt](A3) at (-\Rin,0) { };
    \node[draw, circle, scale = 0.4, red, fill= red, outer sep = 5pt](A4) at (-\Rout,0) { };
    
    \draw[-{latex},red] (A3) to[bend left=20] (A1);
    \draw[-{latex},red] (A4) to[bend right=20] (A1);
    \draw[-{latex},red] (A1) to[bend left=20] (A2);
    \draw[-{latex},red] (A1) to[bend right=20] (A2);

    \draw[blue,dashed] (2.5\Rout,0) circle (\Rout);
    \draw[blue,dashed] (2.5*\Rout,0) circle (\Rin);

    \filldraw[black] (02.5\Rout,-\Rout) circle (0.015);
    \filldraw[black] (02.5\Rout,-\Rin) circle (0.015);

    \draw[black] (02.5\Rout,-\Rout) -- (02.5\Rout,-\Rin);

    \draw[black, line width=1.5pt, rounded corners=8pt]
        (2.5\Rout,-\Rin) 
        .. controls (2.5\Rout-0.05, -0.5) and (2.5\Rout-0.6, -0.7) ..
        (2.5\Rout-0.85,0)
        .. controls (2.5\Rout-0.6,1.0) and (2.5\Rout+0.6,1.0) ..
        (2.5\Rout+0.85,0)
        .. controls (2.5\Rout+0.6,-0.6) and (2.5\Rout+0.05,-0.5) ..
        (2.5\Rout+0,-\Rout);
\node[draw, circle, scale = 0.4, red, fill= red, outer sep = 5pt](B1) at (2.5\Rout+0,-0.5*\Rin-0.5*\Rout) { };
    \node[draw, circle, scale = 0.4, red, fill= red, outer sep = 5pt](B2) at (2.5\Rout+0.5*\Rin+0.5*\Rout,-.25) { };
    \node[draw, circle, scale = 0.4, red, fill= red, outer sep = 5pt](B3) at (2.5\Rout-\Rin,0) { };
    \node[draw, circle, scale = 0.4, red, fill= red, outer sep = 5pt](B4) at (2.5\Rout-\Rout,0) { };
    
    \draw[-{latex},red] (B3) to[bend left=20] (B1);
    \draw[-{latex},red] (B4) to[bend right=20] (B1);
    \draw[-{latex},red] (B1) to[bend left=20] (B2);
    \draw[-{latex},red] (B1) to[bend right=20] (B2);
    \node (gam1) at (-0.11,-0.5*\Rin-0.5*\Rout-0.11) { $\gamma$ };
    \node (gam2) at (2.5\Rout-0.11,-0.5*\Rin-0.5*\Rout-0.11) { $\gamma$ };
\end{tikzpicture}\\
\begin{tikzpicture}[line width = 1.5pt,scale=2]

        \draw (4,0)--(5,0);
        \draw (5,0)--(6,0)[color=black];
        \fill (5,0) circle [radius = 0.05];
        \draw (4,0) .. controls (5,1) .. (6,0) [color=blue,dashed];
        \draw (4,0) .. controls (5,-1) .. (6,0) [color=blue,dashed];
        \node[draw, circle, scale = 0.4, red, fill= red, outer sep = 5pt](A1) at (4.5,0) { };
        
        \node[draw, circle, scale = 0.4, red, fill= red, outer sep = 5pt](A2) at (5.5,0) { };
        \node[draw, circle, scale = 0.4, red, fill= red, outer sep = 5pt](A3) at (5,0.75) { };
        \node[draw, circle, scale = 0.4, red, fill= red, outer sep = 5pt](A4) at (5,-0.75) { };

        \draw[-{latex},red] (A1) to[bend right=20] (A4);
        \draw[-{latex},red] (A3) to[bend right=20] (A1);

        \draw[-{latex},red] (A1) to[bend right=20] (A2);
        \draw[-{latex},red] (A2) to[bend right=20] (A1);

        \draw (6.5,0)--(7.5,0);
        \draw (7.5,0)--(8.5,0)[color=blue, dashed];
        \fill (7.5,0) circle [radius = 0.05];
        \draw (6.5,0) .. controls (7.5,1) .. (8.5,0) [color=black];
        \draw (6.5,0) .. controls (7.5,-1) .. (8.5,0) [color=black];
        \node[draw, circle, scale = 0.4, red, fill= red, outer sep = 5pt](B1) at (7,0) { };
        \node[draw, circle, scale = 0.4, red, fill= red, outer sep = 5pt](B2) at (8,0) { };
        \node[draw, circle, scale = 0.4, red, fill= red, outer sep = 5pt](B3) at (7.5,0.75) { };
        \node[draw, circle, scale = 0.4, red, fill= red, outer sep = 5pt](B4) at (7.5,-0.75) { };
        \draw[-{latex},red] (B1) to[bend right=20] (B4);
        \draw[-{latex},red] (B3) to[bend right=20] (B1);

        \draw[-{latex},red] (B1) to[bend right=20] (B2);
        \draw[-{latex},red] (B2) to[bend right=20] (B1);
        
        \node (gam1) at (4.5-0.11,-0.11) { $\gamma$ };
    \node (gam2) at (7-0.11,-0.11) { $\gamma$ };
\end{tikzpicture}

    \caption{Above are the only foldings of \ref{fig: quad} with a diagonal $\gamma$ with $\pt(\gamma)\ne0$. Note that edges $\gamma_1,\gamma_2$ which are folded must either have $\pt(\gamma_1)\ne 0$ and $\pt(\gamma_2)\ne0$ or $\pt(\gamma_1)=\pt(\gamma_2)=0$. Note that the double foldings result in the once-punctured torus and the thrice-punctured sphere. The quiver of the latter is empty, the quiver of the former is the Markov quiver, which is empty up to mod 2.}
    \label{fig: folded_rects}
\end{figure}

\begin{lemma}
    Let $\surf$ be a surface, $\pt\in\X_\surf$ a deep point, and $\gamma_i,$ $i\in I$ with $\pt(\gamma_i)\ne0$ a collection of compatible arcs. Let $\surf'$ be the surface obtained by cutting along $\gamma_i$, and assume that $\surf'$ is connected with no punctures, with $\pt'$ being the corresponding point when restricting $\pt$ to $\X_{\surf'}$. Then, the deep sign-reversing cluster automorphism $(\sgclaut)'$ extends uniquely to a cluster automorphism on $\A_\surf$. 
    \label{lem:ext}
\end{lemma}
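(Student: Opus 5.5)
The plan is to reduce to the unpunctured case via Lemma \ref{lem:local_chart}, and then verify condition (\ref{eqn:Bwp}) with $p=2$ directly on a triangulation of $\surf$ that contains all the cut arcs.

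First I need a triangulation adapted to the cuts. Since the $\gamma_i$ are mutually compatible, I extend $\{\gamma_i\}_{i\in I}$ to a tagged triangulation $\Delta$ of $\surf$. Cutting along the $\gamma_i$ turns $\Delta$ into a triangulation $\Delta'$ of the unpunctured surface $\surf'$. In $\Delta'$ each $\gamma_i$ appears as two boundary segments $\gamma_i^{(1)},\gamma_i^{(2)}$, and every other arc of $\Delta$ appears as a mutable arc of $\Delta'$.

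Next I need to check that $\pt'$ is deep in $\X_{\surf'}$. By Lemma \ref{lem:local_chart}, $\X_{\surf'}|_{\gamma^{(1)}=\gamma^{(2)}}$ is the chart of $\X_\surf$ where the $\gamma_i$ are inverted. This chart contains $\pt$, because $\pt(\gamma_i)\neq 0$. Every cluster of $\surf'$ together with the $\gamma_i$ is a cluster of $\surf$, so $\pt'$ is deep. Hence $(\sgclaut)'$ exists by the previous section, and Lemma \ref{lem: 1-or-3} applies to every triangle of $\Delta'$.

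On $\Delta$ I define $w$ to be the indicator of $\pt(\gamma)=0$, and I take $w_{\gamma_i}=0$. Then $(-1)^{w}$ agrees with $(\sgclaut)'$ on the common variables. To apply Proposition \ref{prop: claut_extension} I must check that $B_\Delta^T w\equiv 0 \pmod 2$ for each mutable $\gamma\in\Delta$.

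For $\gamma$ not among the $\gamma_i$, the row of $B_\Delta^Tw$ at $\gamma$ counts the zero edges of the quadrilateral around $\gamma$. This quadrilateral is the same as in $\Delta'$, except that edges which are the two sides $\gamma_i^{(1)},\gamma_i^{(2)}$ get identified. Both sides have $w=0$, so the parity is unchanged, and the count is even by the argument of the unpunctured case (Figure \ref{fig: quad}). Self-folded configurations in $\Delta$ have to be handled separately. Since $\surf'$ has no punctures, every puncture of $\surf$ lies on some cut, and I would handle them using the foldings in Figure \ref{fig: folded_rects}. There, folded edge pairs have equal vanishing status, so they contribute an even amount mod 2.

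For $\gamma=\gamma_i$, the two triangles adjacent to $\gamma_i$ in $\Delta$ become triangles of $\Delta'$ containing the boundary segments $\gamma_i^{(1)}$ and $\gamma_i^{(2)}$. Each of these segments is nonvanishing. By Lemma \ref{lem: 1-or-3}, each such triangle then has exactly one vanishing edge among its other two. That gives a total of $2\equiv 0$, so this row also works.

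Uniqueness and independence of the triangulation then follow exactly as in the proof that $\sgclaut$ is a cluster automorphism. The extension is determined on a seed, and under a flip the scaling of the new variable is forced by the exchange relation. I expect the main obstacle to be the careful bookkeeping of the folded cases: arcs of $\Delta$ whose quadrilateral has identified sides, self-folded triangles at punctures lying on cuts, and notched tags. For tags, I plan to first use Corollary \ref{cor: tag_rem} to make everything plain at the punctures, and then reduce to the cases drawn in Figure \ref{fig: folded_rects}.
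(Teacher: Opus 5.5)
Your proposal is correct and is essentially the paper's argument. You extend the cuts to a triangulation, get the rows of $B^Tw$ at non-cut arcs from the unpunctured surface $\surf'$ (identifying $\gamma_i^{(1)},\gamma_i^{(2)}$ does not change parity since $w_{\gamma_i}=0$), and get the rows at the $\gamma_i$ from Lemma \ref{lem: 1-or-3} applied to their two adjacent triangles; your check that $\pt'$ is deep in $\X_{\surf'}$ makes explicit a step the paper leaves implicit. Two small adjustments: the tag and self-folded bookkeeping is best avoided, as the paper does, by choosing $\Delta$ with only plain arcs (Corollary \ref{cor: tag_rem} swaps all tags at a puncture and cannot remove a plain/notched pair), and ``independence of the triangulation'' should only be claimed for triangulations containing the $\gamma_i$, since, as the paper remarks, the extension need not negate every vanishing arc of $\surf$, e.g.\ notched ones.
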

\begin{proof}
    We proceed by induction. We start by extending $\gamma_i$ to a triangulation of $\surf$ with only plain arcs. We note that the only quadrilaterals that a curve $\gamma_i$ can be a part of are given in Figure \ref{fig: quad}, or such quadrilaterals with some sides identified. The only ways to identify sides are given in Figure \ref{fig: folded_rects}. Thus, we note that the $B$ matrix row associated to $\gamma_i$ will have values given by one of these figures. Order the $B$ matrix so that the $|I|$ curves are the last mutable rows/columns. Furthermore, we note that up to mod 2, the amount of sides (counted with appropriate multiplicity) which are killed by a deep point $\pt$ is 0, thus if $\vec w|_{[n-|I|+i-1]}\in \ker(B^T|_{[n-|I|+i-1]})$ mod $2$, then $\vec w|_{[n-|I|+i]}\in \ker(B^T|_{[n-|I|+i]})$ mod 2, and as we know that when there are no punctures, i.e. the base case, that $w|_{[n-|I|]}\in\ker(B^T)|_{n-|I|}$, we have that $w\in\ker(B^T)$, and thus $(\sgclaut)'$ extends uniquely to $\sgclaut$ on $\surf$.
\end{proof}  
Here it is important to note that the action of $\sgclaut$ on any curve not in the original triangulation can not be dictated, i.e. on notched arcs, for instance, it may not reverse the sign of a notched arc $\gamma$ with $\pt(\gamma)=0$.

\begin{lemma}
    If $\A'$ is a freezing of $\A$ at $\{a_1,...,a_i\}$ and both are locally acyclic, then any cluster automorphism $\claut$ of $\A$ takes points in $\X_{\A'}$ to points in $\X_{\A'}$. \label{lem:fixed_sub}
\end{lemma}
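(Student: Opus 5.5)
By Lemma 3.4 of \cite{Muller2013LocallyAcyclic}, the freezing $\A'$ at $\{a_1,\dots,a_i\}$ is the localization $\A[a_1^{-1},\dots,a_i^{-1}]$. So $\X_{\A'}$ is the principal open subset of $\X_\A$ on which the product $a_1\cdots a_i$ does not vanish. The plan is therefore to show that this open set is preserved by every cluster automorphism $\claut$. We regard a point $\pt$ as a homomorphism $\A\to\C$, and write the action of $\claut$ on points as $\pt\mapsto\pt\circ\claut$.

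\textbf{Step 1.} We realize the embedding concretely. The inclusion $\A\hookrightarrow\A[a_1^{-1},\dots,a_i^{-1}]=\A'$ induces an open immersion $\X_{\A'}\hookrightarrow\X_\A$. A point $\pt\in\X_\A$ lies in the image exactly when $\pt(a_j)\neq0$ for all $j$, since only then does $\pt$ extend uniquely to $\A'$ by $a_j^{-1}\mapsto \pt(a_j)^{-1}$. This is the only place local acyclicity enters: it is needed to know that the freezing really is this localization.

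\textbf{Step 2.} We check that the open set is preserved. Each $a_j$ is a cluster variable of $\A$, so by the definition of cluster automorphism $\claut(a_j)=\zeta_j a_j$ for some $\zeta_j\in\C^\times$. Then
\[(\pt\circ\claut)(a_j)=\zeta_j\,\pt(a_j),\]
which is nonzero whenever $\pt(a_j)\neq0$. Hence $\pt\circ\claut$ again lies in $\X_{\A'}$. Applying the same argument to $\claut^{-1}$, which is also a cluster automorphism (it rescales by $\zeta_j^{-1}$), shows that $\claut$ restricts to a bijection of $\X_{\A'}$.

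\textbf{Step 3.} We promote this to an automorphism compatible with the cluster structure of $\A'$. Since $\claut(a_j)$ is a unit multiple of $a_j$, $\claut$ sends the multiplicative set generated by the $a_j$ into units of $\A'$. It therefore extends uniquely to $\A'$ via $a_j^{-1}\mapsto\zeta_j^{-1}a_j^{-1}$. Every cluster variable of $\A'$ is a cluster variable of $\A$ obtained by mutating away from the frozen $a_j$, or is one of the $a_j^{\pm1}$, so the extension still rescales all cluster variables. The restricted action on points is exactly the action of this extension, which is what later sections use.

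The only real subtlety, and the step I expect to be the main obstacle, is Step 1: identifying the freezing with the localization, and hence $\X_{\A'}$ with a principal open subset. Once that is granted, the rest is the one-line observation that rescaling by nonzero scalars cannot create new zeros.
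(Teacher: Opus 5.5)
Your Steps 1 and 2 are the paper's proof: you identify $\X_{\A'}$ with the open locus of $\X_\A$ where $a_1,\dots,a_i$ are all nonzero (Muller's Lemma 3.4), and you observe that $(\pt\circ\claut)(a_j)=\zeta_j\,\pt(a_j)$ with $\zeta_j\in\C^\times$, so nonvanishing of each $a_j$ is preserved; the proposal is correct and takes the same approach. Your Step 3, extending $\claut$ to $\A'$ via $a_j^{-1}\mapsto\zeta_j^{-1}a_j^{-1}$, is correct but not needed for this statement, though it makes explicit the compatibility the paper later relies on when it says $\sgclaut$ agrees with $(\sgclaut)'$ on the chart.
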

\begin{proof}
    As in \cite{Muller2013LocallyAcyclic}, Lemma 3.4, $\X_{\A'}$ is the open subscheme of $\X_\A$ where $a_1,...,a_i$ are non-zero. If $\claut$ is a cluster automorphism, then for each cluster variable $a$, $\claut(a)=\zeta a$ for some $\zeta\in\C^\times$. Thus if $\pt$ is a point in $\X_{\A}$, for any cluster variable $a$, $\pt(a)=0$ if and only if $\vp(\pt)(a)=0$. Thus $\pt$ is in the subscheme where $a_1,...,a_i$ are non-zero if and only if $\claut(\pt)$ is.  
\end{proof}

\begin{lemma}
    If $\X_\surf$ is locally acyclic, and $\pt\in \X_\surf$ is a deep point with all punctures $\pun\in\puns\subset\surf$ not $\pt$-essential then the cluster automorphism $\sgclaut$ fixes $\pt$.
\end{lemma}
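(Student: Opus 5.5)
We will show directly that $\sgclaut$, which here means the automorphism built in Lemma \ref{lem:ext} from the deep sign-reversing automorphism of the cut surface, fixes $\pt$. The map rescales cluster variables, so $\sgclaut(\pt)=\pt$ holds exactly when $\pt(\sgclaut(x))=\pt(x)$ for every cluster variable $x$, i.e. whenever $\sgclaut(x)=\zeta x$ with $\zeta\ne 1$, we need $\pt(x)=0$. Since $\A_\surf$ is generated by cluster variables, this suffices.

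\textbf{Setup.} By Lemma \ref{lem: cuts}, choose mutually compatible arcs $\gamma_\pun$ with $\pt(\gamma_\pun)\ne0$ whose cuts remove all punctures without disconnecting $\surf$. By Corollary \ref{cor: tag_rem} we may assume they are plain at the punctures. Let $\surf'$ be the cut surface and $\pt'$ the restriction of $\pt$.

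\textbf{Step 1: the restricted point lies in the chart and is deep there.} By Lemma \ref{lem:local_chart} (this is where local acyclicity enters), the locus $\{\gamma_\pun\ne0\}$ is the chart $\X_{\surf'}|_{\gamma_1=\gamma_2}$, and $\pt$ lies in it. So $\pt$ is a point of the cluster algebra obtained by freezing the $\gamma_\pun$, which is the algebra of the unpunctured surface $\surf'$ with the $\gamma_\pun$ as boundary (frozen) arcs. This point is deep in $\X_{\surf'}$: every cluster of $\surf'$ together with the $\gamma_\pun$ is a cluster of $\surf$, and $\pt$ is deep.

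\textbf{Step 2: agreement on the unpunctured part.} Lemma \ref{lem: 1-or-3} applies to $\pt'$, and the construction of Section \ref{sec:pf} gives $(\sgclaut)'$, which fixes $\pt'$ by definition. The frozen arcs $\gamma_\pun$ have $w=0$, consistent with $\pt(\gamma_\pun)\ne 0$. By Lemma \ref{lem:ext}, $\sgclaut$ is the unique extension of $(\sgclaut)'$ to $\A_\surf$. On the localization $\A_\surf[\gamma_\pun^{-1}]$, which is the frozen algebra $\A'$ by Muller's Lemma 3.4, it agrees with $(\sgclaut)'$, because both are determined by the same toric rescaling on a common seed.

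\textbf{Step 3: conclude.} The point $\pt$ corresponds to a homomorphism $\A'\to\C$, and $\A_\surf\subset\A'$. Since $(\sgclaut)'$ fixes $\pt'$, it follows that $\pt\circ\sgclaut=\pt'\circ(\sgclaut)'|_{\A_\surf}=\pt$. Lemma \ref{lem:fixed_sub} guarantees that $\sgclaut$ preserves the chart, so this computation is legitimate. Hence $\sgclaut$ fixes $\pt$.

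\textbf{Main obstacle.} The delicate point is the caveat after Lemma \ref{lem:ext}: $\sgclaut$ need not negate every arc killed by $\pt$, for instance notched arcs. A direct arc-by-arc check would therefore fail. The argument above avoids it by working through the open chart, since fixing a point is checked after localization. The remaining care is to justify that $\A_\surf\to\A'$ is injective and equivariant; injectivity holds because both algebras are subrings of the same fraction field. It must also be checked that the restricted point is genuinely deep on $\surf'$, which requires that clusters of $\surf'$ extend to clusters of $\surf$ by adding the cut arcs.
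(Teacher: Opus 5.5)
Your proposal is correct and follows essentially the same route as the paper. In both, $\pt$ is placed in the chart obtained by freezing the non-vanishing cuts $\gamma_\pun$ (Lemmas \ref{lem: cuts} and \ref{lem:local_chart}), $\sgclaut$ is shown to agree there with $(\sgclaut)'$, which fixes $\pt'$, and the conclusion follows. Your explicit check that $\pt'$ is deep on $\surf'$, which is what makes Lemma \ref{lem: 1-or-3} and hence $(\sgclaut)'$ available, and your phrasing through the localization $\A_\surf\subset\A_\surf[\gamma_\pun^{-1}]$ only make precise steps the paper leaves implicit.
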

\begin{proof}
    Given $\pt$ as above, using Lemmas \ref{lem:ext}, \ref{lem: cuts}, we can find a cluster automorphism $\sgclaut$ such that the induced point $\pt'$ on $\surf'$, the surface obtained by cutting along all cuts, is fixed by $(\sgclaut)'$. Now, using Lemma \ref{lem:local_chart}, we have that the subscheme of $\X_{\surf'}$ given by restricting to points which agree on both images of each cut includes as a cluster chart into $\X_\surf$. Furthermore, $\sgclaut$ takes points in this chart to other points in this chart by Lemma \ref{lem:fixed_sub}. Furthermore, we have that, by the definition of $\sgclaut$, it agrees with $(\sgclaut)'$ on this chart. Moreover, we know,  as $\pt$ is non-zero along the cuts, it belongs to this cluster chart, and it in fact agrees with $\pt'$. Thus we have that $\sgclaut$ acts as $(\sgclaut)'$ on this chart, and $\pt$ is in this chart and agrees with $\pt'$, we have that $\pt$ must be fixed by $\sgclaut$, as $\pt'$ is fixed by $(\sgclaut)'$. 
\end{proof}
Thus, if all punctures are not $\pt$-essential, we have constructed a cluster automorphism $\sgclaut$ which fixes $\pt$, and thus $\pt$ can not be a mysterious point, thus finishing the proof of Proposition \ref{prop: cuttable}.
\par Now that we have shown Propositions \ref{prop: unpunc}, \ref{prop: essential}, and \ref{prop: cuttable}, we see that no locally acyclic surface type cluster algebras admit mysterious points, thus showing Theorem \ref{thm}.
\begin{corollary}
    If $\surf=\bigsqcup_i\surf_i$ i.e. $\surf$ is a marked surface with multiple connected components, as long as $\X_{\surf_i}$ is locally acyclic for all $i$, then $\X_\surf$ has no mysterious points.
    \label{cor:multi}
\end{corollary}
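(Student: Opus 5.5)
The plan is to reduce to the connected case via the product structure. If $\surf=\bigsqcup_i\surf_i$, then any triangulation $\Delta$ of $\surf$ is a disjoint union of triangulations $\Delta_i$ of the $\surf_i$. The quiver $Q_\Delta$ is the disjoint union of the $Q_{\Delta_i}$, since arrows only join arcs sharing a marked point in a common component. It follows that $\A_\surf\cong\bigotimes_i\A_{\surf_i}$ and $\X_\surf\cong\prod_i\X_{\surf_i}$. Clusters of $\A_\surf$ are exactly unions of clusters of the $\A_{\surf_i}$, because mutations in different components commute and never interact.

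\textbf{Step 1: some factor of a deep point is deep.} Write a point as $\pt=(\pt_i)_i$. Suppose no $\pt_i$ were deep. Then for each $i$ there would be a cluster $\vec x^{(i)}$ of $\A_{\surf_i}$ on which $\pt_i$ is nonzero. The union $\bigcup_i\vec x^{(i)}$ is a cluster of $\A_\surf$ on which $\pt$ is nonzero, so $\pt$ would not be deep. Hence if $\pt$ is deep, some $\pt_j$ is a deep point of $\X_{\surf_j}$.

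\textbf{Step 2: lift a stabilizing automorphism from that factor.} By Theorem~\ref{thm} applied to the connected, locally acyclic $\surf_j$, the deep point $\pt_j$ is not mysterious. So there is a non-identity cluster automorphism $\claut_j$ of $\A_{\surf_j}$ fixing $\pt_j$; in the paper's constructions this is either a horocycle rescaling or $\sgclaut$. Define $\claut=\claut_j\otimes\bigotimes_{i\ne j}\mathrm{id}$ on $\A_\surf$.

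This $\claut$ rescales every cluster variable, since each cluster variable of $\A_\surf$ lies in a single factor. It is non-identity and fixes $\pt=(\pt_i)$. Alternatively, one can check directly via Proposition~\ref{prop: claut_extension} that the weight vector, supported on $\surf_j$, lies in $\ker B^T$ (or in the mod $p$ kernel), because $B$ is block diagonal.

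\textbf{Main obstacle.} The only real point is justifying that the cluster structure of a disjoint union is the tensor product, with clusters being unions of clusters. This follows from the block-diagonal exchange matrix: mutation at a vertex in one block leaves the other blocks unchanged, and exchange relations involve only variables from the same block. Frozen boundary variables also split componentwise, so no coupling arises there either.
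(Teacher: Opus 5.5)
Your proposal is correct and matches the paper's proof step for step. You first show that some factor $\pt_j$ of a deep point is itself deep, by combining non-vanishing clusters on each component. You then extend a stabilizing cluster automorphism of $\A_{\surf_j}$ by the identity on the other factors. Your block-diagonal justification that $\X_\surf\cong\prod_i\X_{\surf_i}$, with clusters being unions of component clusters, spells out a point the paper takes for granted.
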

\begin{proof}
    Let $\pt\in\X_\surf=\X_{\bigsqcup_i\surf_i}=\prod_i\X_{\surf_i}$ be deep. Assume that for all $i$, $\pt|_{\surf_i}$ is not deep. Then there exists a triangulation $T_i$ of $\surf_i$ for each $i$ such that $\pt(\gamma)\ne0$ for all $\gamma$ in that triangulation. Take the triangulation $T$ of $\surf$ as $\bigcup_i T_i$ the union of these triangulations. Then note $\pt(\gamma)\ne0$ for all $\gamma\in T$. Thus $\pt$ is not deep, which is a contradiction. Thus $\pt|_{\surf_i}$ must be deep for some $i$.
    \par Given $\pt|_{\surf_i}$ is deep, by Theorem \ref{thm}, there is some $\claut'$ on $\X_{\surf_i}$ which fixes $\pt|_{\surf_i}$, then we can define $\claut$ to be the identity on the other components of the surface and $\claut'$ on $\X_{\surf_i}$, i.e. $\claut=\claut'_i\times\prod_{j\ne i}\mathds1_j$. Now we have:
    $$\claut(\pt)=\claut'_i(\pt|_{\surf_i})\times\prod_{j\ne i}\mathds1_j(\pt|_{\surf_j})=\pt|_{\surf_i}\times\prod_{j\ne i}\pt|_{\surf_j}=\pt.$$
    Thus given $\pt$ is deep, we can find $\claut$ which fixes $\pt$, and thus $\X_\surf$ has no mysterious points.
\end{proof}

\section{Part 2: Finite Type Cluster Algebras}
\label{sec:p2}
The no mysterious point conjecture has been shown to hold for simply-laced finite type cluster algebras (i.e. types A, D, and E) by \cite[Corollary 5.20]{CGSS}. The goal of this section is to prove the result for types B and C, and then the two exceptional cases $F_4$ and $G_2$. These cases combined with the work of Castronovo, Gorsky, Simental, and Speyer will complete the proof of the following theorem.
\begin{theorem}
    Finite-type Cluster Algebras have no mysterious points.
\end{theorem}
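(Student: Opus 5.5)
The plan is to run through the Cartan--Killing classification of finite type cluster algebras \cite{FZ2} and treat each mutable part separately. Proposition \ref{prop:RFR} lets us ignore frozen variables: for each Dynkin type it suffices to exhibit one really full rank extended exchange matrix with that mutable part and show its cluster variety has no mysterious points. Types $A$, $D$, $E$ are \cite[Corollary 5.20]{CGSS}. So the cases left are $B_n$, $C_n$, $F_4$, $G_2$, and the rank~2 case $B_2=C_2$ together with $G_2$ can be done first and by hand.

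\textbf{Types $B_n$ and $C_n$.} I would use folding. The plan is to realize $C_n$ (respectively $B_n$) as the invariants of a diagram automorphism $\sigma$ acting on a type $A_{2n-1}$ (respectively $D_{n+1}$) cluster algebra, equipped with principal-type frozen variables so that the matrix is really full rank. Concretely, $C_n$ should arise as the $\sigma$-invariant part of the unpunctured polygon model, obtained by centrally symmetric triangulations of a $2n+2$-gon. For $B_n$ I would use the once-punctured $(n+1)$-gon, invoking the tag-swapping symmetry $\taginv$.

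Given a deep point $\pt$ of the folded variety, I would pull it back to a $\sigma$-invariant point of the unfolded variety via the surjection of algebras, and check that it is still deep there. Cluster variables of the folded algebra correspond to $\sigma$-orbits of arcs, and $\sigma$-invariant clusters exhaust the folded clusters. The delicate part is that deepness upstairs must be tested on all clusters, not just invariant ones. An alternative avoids this: work directly with the orbit-arc model and copy the surface arguments. A centrally symmetric Lemma \ref{lem: 1-or-3} would give a sign-reversing automorphism as in Proposition \ref{prop: unpunc}, and in type $B$ the horocycle rescaling at the puncture (Definition \ref{def: horocycle_map}) would handle the case where the puncture is $\pt$-essential, as in Proposition \ref{prop: essential}. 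In both routes, the verification that the candidate torus action is a cluster automorphism reduces to Proposition \ref{prop: claut_extension}, checking $B^Tw\equiv 0 \bmod p$ on one seed. Consistency under mutation then follows because the action is defined uniformly on orbit arcs.

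\textbf{Rank 2 ($B_2$, $G_2$) and $F_4$.} For rank 2 I would compute directly. The clusters form a cycle of length $6$ or $8$, the cluster variables $x_m$ satisfy $x_{m-1}x_{m+1}=x_m^{b}+1$, and deep points are those killing one variable in each consecutive pair. With principal or really full rank coefficients I would list the vanishing patterns, for instance $x_0=x_2=x_4=\cdots=0$. For each pattern I would solve for a weight vector $w$ and a root of unity $\zeta$ with $B^Tw\equiv 0\bmod \operatorname{ord}\zeta$ whose rescaling fixes $\pt$, that is, $w$ is nonzero only on variables vanishing at $\pt$. For $F_4$ I would enumerate the deep locus computationally, or fold from $E_6$, and produce such a $w$ for each component.

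The main obstacle is the $B/C$ step: establishing a deep-point characterization for the folded models strong enough to make the mod-$2$ (or horocycle) weight vector lie in $\ker B^T$. I would attempt the direct orbit-arc version of Lemma \ref{lem: 1-or-3} first. If that fails, I would fall back on the pullback to the unfolded $A$ or $D$ algebra and average the \cite{CGSS} automorphism over $\sigma$.
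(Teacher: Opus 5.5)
There is a genuine gap. For $B_n$, $C_n$ and $F_4$ you outline strategies but never obtain what the argument needs in these types: a description of the deep locus precise enough to choose $w$.

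\textbf{Pullback route.} With $\rho$ the specialization map, $\pt\circ\rho$ is a $\sigma$-invariant point upstairs. A deep point downstairs is only known to vanish somewhere on each $\sigma$-invariant cluster. As you note, deepness upstairs must also be tested on non-invariant clusters, and you give no argument for this, so \cite{CGSS} cannot be invoked. Even granting it, ``averaging'' a stabilizer $\claut_w$ over $\sigma$ gives $\claut_{w+\sigma w}$, which can be the identity: for a $\sigma$-invariant sign change, $w+\sigma w=2w\equiv 0\bmod 2$. What you need is a nontrivial $\sigma$-invariant stabilizer. Its weights would then descend, since for the folded matrix $B^\sigma$ one has $(B^\sigma)^Tw=B^Tw$ on orbit representatives.

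\textbf{Orbit-arc route.} This presupposes a centrally symmetric version of Lemma \ref{lem: 1-or-3}, which is exactly the missing deep-locus statement. Beyer--Muller's lemma concerns points vanishing on some arc of \emph{every} triangulation of an honest surface. The route also does not combine with Proposition \ref{prop:RFR}. In a centrally symmetric triangulation of the $(2n+2)$-gon, every neighbour $e$ of the diameter $d$ is paired with $\sigma e\neq e$, and $b_{e,d}=b_{\sigma e,d}$. Hence the $d$-column of the folded matrix with boundary coefficients is $\equiv 0 \bmod 2$, and that matrix is never really full rank. With principal-type coefficients instead, the surface model is no longer available.

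\textbf{$F_4$ and rank $2$.} The $F_4$ exchange matrix has determinant $1$, so $B^Tw\equiv 0\bmod p$ forces $w\equiv 0$. There are no nontrivial cluster automorphisms, so ``produce such a $w$ for each component'' can only succeed by proving the deep locus is empty. Likewise in rank $2$ you must show that the alternating pattern whose vanishing variables enter the exchange relations with exponent $1$ does not occur, since no nontrivial $w$ fixes it. This is the content of \cite[Theorem 4.5]{BM}, which the paper uses.

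\textbf{What the paper does instead.} It avoids folding entirely. These types are acyclic, hence locally acyclic, so freezing a variable on which $\pt$ is nonzero gives a cluster chart containing $\pt$. That chart splits into smaller type $B$/$C$ and type $A$ pieces. Induction on rank, with Lemma \ref{lem:not_deep_leaf} and Proposition \ref{prop:rk2}, then shows:
\begin{itemize}
\item a deep point of $B_n$ vanishes on $x_1,x_1'$, the end of the double edge;
\item a deep point of $C_n$ in source--sink orientation vanishes on all $x_i,x_i'$ of one parity.
\end{itemize}
An explicit $w$ in that acyclic seed with $B^Tw\equiv 0\bmod 2$ then negates exactly those generators. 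This fixes $\pt$ because $\A$ is generated by the $x_i,x_i'$ \cite{BFZ}. For $F_4$, Lemma \ref{lem:not_deep_leaf} at the two leaves forces $\pt(x_2)=\pt(x_3)=0$. The exchange relations at $x_2$ and then $x_1$ give a contradiction, so there are no deep points at all.

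To keep your folding approach you would need three separate ingredients: a lemma that $\pt\circ\rho$ is deep whenever $\pt$ is, a nontrivial $\sigma$-invariant stabilizer, and a really full rank coefficient system compatible with both.
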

The following lemma will prove instrumental.
\begin{lemma}
    Let $Q$ be a quiver, $v\in Q$ a vertex, and let $v'+Q=Q'$ be a new quiver identical to $Q$ but with a leaf $v'$ added incident to $v$. Then, if $p\in \X_{Q'}$ is a point such that $p(v)\in\C^\times$, and $\pt|_{Q-v}$ is not deep, then $p$ is not deep. \label{lem:not_deep_leaf}
\end{lemma}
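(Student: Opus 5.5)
The plan is to exhibit an explicit cluster of $Q'$ on which $\pt$ is nonvanishing. We build it from a cluster of $Q$ on which $\pt|_{Q-v}$ is nonvanishing, enlarged by a single variable attached to the leaf.

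\textbf{Step 1: interpret $\pt|_{Q-v}$.} We read $Q-v$ as the quiver obtained from $Q'$ by freezing $v$ and then deleting the leaf $v'$. Freezing $v$ is legitimate because $\pt(v)\in\C^\times$: the point $\pt$ lies in the open subscheme where $x_v$ is invertible. Since $v'$ is adjacent only to $v$, freezing $v$ disconnects $v'$ from the rest of the mutable part. The freezing of $Q'$ at $v$ is therefore, up to the frozen vertex $v$, a disjoint union of two pieces:
\begin{itemize}
\item the $A_1$ quiver $\{v'\}$, with frozen neighbour $v$;
\item the mutable part of $Q$ with $v$ frozen.
\end{itemize}
Mutations of $Q'$ at vertices other than $v$ and $v'$ never create arrows at $v'$, because $v'$ is a leaf on $v$ and we never mutate at $v$. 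Consequently, any sequence of mutations of $Q-v$ (with $v$ frozen) lifts to a sequence of mutations of $Q'$ that leaves both $x_v$ and $x_{v'}$ unchanged. It also produces the same cluster variables, since the exchange relations agree: mutations at a mutable vertex $k\neq v'$ of $Q$ involve $v'$ only through arrows $k$–$v'$, and there are none.

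\textbf{Step 2: choose the cluster.} By hypothesis there is a seed of $Q-v$ (with $v$ frozen) in which every variable is nonzero at $\pt$. Lift this seed to $Q'$. In the lifted seed, every variable other than $x_{v'}$ is nonzero at $\pt$: the variable $x_v$ by assumption, and the others because they form the nondeep seed. If also $\pt(x_{v'})\neq0$, this cluster witnesses that $\pt$ is not deep.

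\textbf{Step 3: handle $\pt(x_{v'})=0$.} Otherwise we mutate at $v'$ in this lifted seed. Since $v'$ is a leaf whose only neighbour is $v$, the exchange relation reads
$$x_{v'}x_{v'}'=x_v^{|b|}+\prod(\text{frozen factors}),$$
where $b$ is the entry of the exchange matrix between $v'$ and $v$. Depending on orientation, $x_v^{|b|}$ sits in one monomial and $1$ (or a product of frozen variables adjacent to $v'$, which are units) in the other. If we take the leaf to have no frozen neighbours, the relation is $x_{v'}x'_{v'}=x_v^{|b|}+1$. Evaluating at $\pt$ with $\pt(x_{v'})=0$ gives $\pt(x_v)^{|b|}=-1$, which is consistent but tells us nothing about $x'_{v'}$ directly. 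So the argument must go differently: since the two monomials cannot both vanish in the product, $x_{v'}$ and $x'_{v'}$ cannot both vanish at $\pt$ once we know $x_v^{|b|}+1\neq0$.

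This is exactly the main obstacle: ruling out $\pt(x_v)^{|b|}=-1$ together with $\pt(x_{v'})=0=\pt(x'_{v'})$. My first attempt would use the other exchange relation available in the rank-one piece. That piece is a frozen $A_1$ with the two variables $x_{v'}$ and $x'_{v'}$, and their product equals $x_v^{|b|}+1$. If both vanish, this forces $\pt(x_v)^{|b|}=-1$; this does not contradict anything, so a single cluster is not enough. I would instead vary the seed of $Q-v$. In the lifted seed the variable $x_v$ stays fixed, so the obstacle depends only on $\pt(x_v)$, and we must show directly that $\pt(x_{v'})$ or $\pt(x'_{v'})$ is nonzero. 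Since $\pt$ is a ring map and $x_{v'}x'_{v'}=x_v^{|b|}+1$, both vanishing forces $\pt(x_v)^{|b|}=-1$. If the intended statement implicitly assumes a frozen or generic setting in which $x_v^{|b|}+1$ is invertible, the proof closes. Otherwise I would check whether the nondeep seed of $Q-v$ can be chosen so that some neighbour of $v$ enters the relation, or whether $x_v$ can be mutated, to break the degeneracy. This verification is the step I expect to need the most care.
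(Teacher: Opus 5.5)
Your Steps 1 and 2 coincide with the first half of the paper's proof. The gap is Step 3: it is left open, and the strategy you pursue there cannot close it. As long as $v$ stays frozen, position $v'$ only ever carries $x_{v'}$ and $x'_{v'}$, whatever seed of $Q-v$ you pick, because the exchange relation at $v'$ involves only $x_v$ and frozen variables. So varying the seed of $Q-v$ changes nothing. In that chart, the obstruction you found is a genuine deep point of the frozen $A_1$ piece. Concretely, take coefficient-free $A_2$ with cluster variables $x_1,\dots,x_5$, clusters $\{x_i,x_{i+1}\}$ and relations $x_{i-1}x_{i+1}=x_i+1$. Set $Q=\{v\}$, $x_v=x_1$, $x_{v'}=x_2$, so that $x'_{v'}=x_5$. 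The point $(x_1,\dots,x_5)\mapsto(-1,0,-1,-1,0)$ satisfies every hypothesis of the lemma, yet both clusters containing $x_1$ vanish at it. So no argument that keeps $x_v$ fixed can work; you have to mutate at $v$ as well.

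The missing step is how the paper finishes. Step 2 gives you a seed of $Q'$ in which every variable other than $x_v,x_{v'}$ is nonzero at $\pt$. Now freeze all vertices except $v$ and $v'$, rather than freezing $v$. Then $\pt$ lies in this chart, whose mutable part is the single arrow between $v$ and $v'$, i.e.\ type $A_2$ with coefficients. $A_2$ with arbitrary frozen variables has no deep points. You can get this from Proposition \ref{prop:rk2} with $b=c=1$, or directly: in the exchange pentagon $x_{i-1}x_{i+1}=c_ix_i+d_i$ with $c_i,d_i$ frozen monomials, a zero at $x_i$ forces $x_{i\pm1}\neq0$. The zero set of a deep point would then be an independent vertex cover of a $5$-cycle, which does not exist. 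A nonvanishing cluster of this chart is a nonvanishing cluster of $Q'$, so $\pt$ is not deep. In the example above, mutating at $v$ and then at $v'$ reaches the cluster $\{x_3,x_4\}$, on which $\pt$ takes the values $-1,-1$.

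This uses $|b|=1$ essentially. For a leaf attached by a double arrow the obstruction is real: the Kronecker quiver with $Q=\{v\}$ has deep points with $\pt(x_v)^2=-1$ (Proposition \ref{prop:rk2} with $b=c=2$). So the lemma must be read with a single arrow, and your general-$|b|$ setup cannot succeed as stated.
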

\begin{proof}
    We note that we can find a chart on $Q'$ such that $p|_{Q-v}$ is non-zero by noting that freezing at $v$ would give us the variety $\X_{Q-v}\times\X_{A_1}$, and $\pt$ restricted to $\X_{Q-v}$ is not deep, so we can find a cluster on which the point evaluates to non-zero on each cluster variable. We can use the same mutations to get to this chart on $Q'$, without mutating at $v$. And since $v'$ is a leaf at $v$, the result of the mutations will still be $v'$ being a leaf at $v$. This means that we can find a cluster chart of $Q'$ with all vertices but $v,v'$ frozen, and $p$ will be in this chart. But, we know that the cluster variety of this chart is isomorphic to $A_2$ cross some torus, as there are only 2 mutable vertices. However, $A_2$ has no deep points, meaning for every point in this chart, there exists a mutation equivalent quiver such that $p$ on every cluster variable in that quiver is non-zero. Thus $p$ can not be a deep point in this chart, meaning it lies in a cluster torus in this chart. Note that the cluster tori of the charts are a subset of the cluster tori of the whole variety. Thus $p$ is in a cluster torus, and is thus not a deep point, as desired. 
\end{proof}
First we will investigate the rank 2 case, followed by types $B$ and $C$ and finally type $F_4$.

\subsection{Rank 2}\label{sec:rank 2}
Other than type $A_2$, which has no deep points due to the combinatorial restriction from Lemma \ref{lem: 1-or-3}, the other two finite type rank 2 cluster algebras are type $B_2$ and type $G_2$. Note these are acyclic cluster algebras, and thus by \cite{BFZ}, if the initial cluster is $(x_1,x_2)$, the cluster algebra $\A\cong\Bk[x_0,x_1,x_2,x_3]/\sim$ where $x_0$ is given by mutation at $x_2$, $x_3$ is given by mutation at $x_1$, and $\sim$ is the ideal generated by these two cluster relations.

\par We will make use of \cite[Theorem 4.5]{BM}.
\begin{proposition}
    \cite[Theorem 4.5]{BM} Let $\A$ be a rank 2 cluster algebra with exchange matrix:
    $$\begin{bmatrix}
        0&b\\-c&0\\-\vec e_2&\vec e_1
    \end{bmatrix}$$
    Assume either $c>1$ or $\operatorname{char}(\Bk)$ divides $b$. For each $\alpha\in\Bk^\times$ and each $\beta\in(\Bk^\times)^f$ such that $\alpha^b+\beta^{-\vec e_1}=0$, there is a deep point in $V(\A,\Bk)$ defined by
    $$(x_0,x_1,x_2,x_3,y_1,...,y_f)\mapsto(0,\alpha,0,\alpha^{-1}\beta^{[\vec e_2]_{-}},\beta_1,...,\beta_f)$$
    Assume either $b>1$ or $\operatorname{char}(\Bk)$ divides $c$. For each $\alpha \in \Bk^\times$ and each $\beta\in(\Bk^\times)^f$ such that $\alpha^c+\beta^{-\vec e_2}=0$, there is a deep point in $V(\A,\Bk)$ defined by
    $$(x_0,x_1,x_2,x_3,y_1,...,y_f)\mapsto (\alpha^{-1}\beta^{[\vec e_1]_{-}},0,\alpha,0,\beta_1,...,\beta_f)$$
    Furthermore, every deep point in $V(\A,\Bk)$ is one of these two mutually exclusive types.
    \label{prop:rk2}
\end{proposition}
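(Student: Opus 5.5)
This proposition is cited from \cite[Theorem 4.5]{BM}, so we may simply invoke it. If we wanted to reprove it, the plan would go as follows.

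We use the presentation from \cite{BFZ}. Since the cluster algebra is acyclic of rank 2, we have
$$\A\cong\Bk[x_0,x_1,x_2,x_3,y^{\pm1}]/(x_0x_2-\beta\text{-monomial}-x_1^{b},\ x_1x_3-x_2^{c}-\beta\text{-monomial}).$$
Here the frozen monomials come from the rows $-\vec e_2,\vec e_1$. By \cite{BFZ} (acyclicity, hence the standard monomial basis and the lower bound equal to the upper bound), a point is deep iff it misses the tori of every cluster. For acyclic rank 2 with the covering by the clusters $\{x_0,x_1\},\{x_1,x_2\},\{x_2,x_3\}$, it suffices to show that the point lies outside these three tori. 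This uses the standard fact, due to Muller, that these charts cover the locally acyclic variety. So $p$ is deep iff $p$ vanishes on one variable of each adjacent pair. In a 4-cycle of consecutive variables $x_0,x_1,x_2,x_3$ this forces either $x_0=x_2=0$ or $x_1=x_3=0$; note that $x_0=x_3=0$ with $x_1,x_2\ne0$ leaves the torus $\{x_1,x_2\}$ untouched.

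Case $x_0=x_2=0$. The first relation gives $\alpha^b+\beta^{-\vec e_1}=0$ with $\alpha=p(x_1)$, which must be nonzero since the relation's frozen term is a unit. The second relation then determines $x_3=\alpha^{-1}\beta^{[\vec e_2]_-}$. The symmetric case $x_1=x_3=0$ is handled the same way.

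The main obstacle is the hypothesis that $c>1$ (or $\operatorname{char}\Bk\mid b$). It is needed to ensure that $x_0=x_2=0$ is consistent with the relations beyond these four: the variables further along the exchange cycle must be polynomials vanishing appropriately. For instance, the next variable $x_4=(x_3^b+\dots)/x_2$ must be regular at $p$, and when $b=c=1$ (type $A_2$) this forces a contradiction. We would check this by expanding $x_4$ and $x_{-1}$ in the standard monomial basis and evaluating at $p$. Mutual exclusivity is immediate, since one type has $x_1\ne0$ and the other has $x_1=0$.
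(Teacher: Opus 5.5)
Invoking \cite[Theorem 4.5]{BM} is exactly what the paper does; it gives no proof of its own. The reproof you sketch, however, has a genuine gap.

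The reduction ``$p$ is deep iff it lies outside the tori of $\{x_0,x_1\},\{x_1,x_2\},\{x_2,x_3\}$'' is false; only the forward implication holds. Muller's covering is by the open sets $\{x_1\neq0\}$ and $\{x_2\neq0\}$, i.e.\ spectra of freezings, not by cluster tori. Those freezings can themselves contain points outside their own tori.

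Already in type $A_2$ ($x_0x_2=x_1+1$, $x_1x_3=x_2+1$) the point $(x_0,x_1,x_2,x_3)=(0,-1,0,-1)$ misses all three tori. Yet $x_4=x_0x_3-1$ equals $-1$ there, so $p$ lies in the torus of $\{x_3,x_4\}$.

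This also shows that your last paragraph misreads the role of the hypothesis. By the BFZ presentation every solution of the two relations is a point of $V(\A,\Bk)$, and $x_4,x_{-1}$ are polynomials in $x_0,\dots,x_3,y^{\pm1}$. So there is no regularity or consistency issue, and no contradiction in type $A_2$: the point exists but is not deep. What must be shown is that $p$ kills a variable in \emph{every} cluster $\{x_i,x_{i+1}\}$, $i\in\Z$. There are infinitely many such clusters once $bc\ge4$, so evaluating $x_4$ and $x_{-1}$ cannot suffice.

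What your $x_4$ idea does correctly yield is the necessity of the hypotheses. With $c=1$ (say without frozens) one gets $x_1^bx_4=\sum_{j\ge1}\binom{b}{j}x_2^{j-1}+x_0$, so $p(x_4)=-b$. Hence if $\operatorname{char}\Bk\nmid b$, the candidate point lies in the torus of $\{x_3,x_4\}$; symmetrically, $p(x_{-1})=-c$ for the second type when $b=1$. This settles the ``every deep point is of one of these types'' part, once combined with a remark missing from your sketch: two adjacent variables cannot vanish simultaneously, because each exchange relation has a frozen-monomial (unit) term.

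For sufficiency over $\C$ (or whenever $\Bk$ has a primitive $c$-th root of unity $\zeta$), use the rescaling $x_1\mapsto x_1$, $x_2\mapsto\zeta x_2$, with frozens fixed.
\begin{itemize}
\item It extends to a cluster automorphism, since the $x_2$-row of $\tilde B$ is $(-c,0)\equiv 0 \bmod c$.
\item Propagating through the exchange relations, it fixes each $x_{2k+1}$ and multiplies each $x_{2k}$ by $\zeta^{\pm1}$.
\item It fixes $p$ on the generators $x_0,\dots,x_3,y^{\pm1}$, hence fixes $p$.
\item Therefore $p(x_{2k})=\zeta^{\pm1}p(x_{2k})$ forces $p(x_{2k})=0$ for all $k$, and every cluster contains an even-indexed variable.
\end{itemize}
The positive-characteristic branches (e.g.\ $c=1$ with $\operatorname{char}\Bk\mid b$, or $c$ a power of the characteristic) need a separate argument, which is what \cite{BM} supplies.
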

Note that the above proposition also handles the $A_2$ case, namely, since neither $b>1$ nor $c>1$, $A_2$ with any number of frozens has no deep points. We will show the following Lemma.
\begin{lemma}
    All deep points arising from Proposition \ref{prop:rk2} are stabilized by cluster automorphisms.
\end{lemma}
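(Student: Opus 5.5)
We treat the two families of deep points in Proposition \ref{prop:rk2} separately; by the symmetry swapping $(x_1,b,\vec e_1)$ with $(x_2,c,\vec e_2)$ it is enough to handle the first. In each case we look for a toric rescaling of the initial seed $(x_1,x_2,y_1,\dots,y_f)$ that acts only on the cluster variables vanishing at the point, and we check that it extends by Proposition \ref{prop: claut_extension}.

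Take a deep point $\pt$ of the first type, so $\pt(x_0)=\pt(x_2)=0$, $\pt(x_1)=\alpha\ne0$, and $\pt(y_j)=\beta_j\ne0$. We use a root of unity $\zeta$ of order $p=c$ (possible because $c>1$), with weight vector $\vec w=\vec e_{x_2}$. Then $B^T\vec w$ is the $x_2$ row of the extended exchange matrix, namely $(-c,0,\vec e_1)$ up to sign conventions. The condition in (\ref{eqn:Bwp}) would need every entry to be divisible by $c$, and the frozen entries $\vec e_1$ may fail this. So we modify $\vec w$: we keep $w_{x_1}=0$ and choose weights on the frozen variables, or else pass to the setting where frozens do not matter. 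The cleanest route is Proposition \ref{prop:RFR}. We first treat the principal/really full rank extension, where we may choose the frozen part freely, and then transfer the result. Alternatively we can compute directly. Since $x_1$ is fixed and $x_2\mapsto\zeta x_2$, the exchange relation $x_2x_0=\ldots$ forces $x_0\mapsto\zeta^{-1}x_0$, and $x_3=(x_2^c+\text{frozen monomial})/x_1$ has to scale homogeneously. That happens exactly when $\zeta^c=1$ and the frozen weights are chosen compatibly; for example, rescale the $y_j$ trivially whenever $c\mid(\vec e_1)_j$.

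This rescaling fixes $\pt$, since it only moves the coordinates $x_2$ and $x_0$, which vanish at $\pt$. The remaining check is that $x_3$ is fixed or rescaled consistently at its nonzero value. At $\pt$ we have $\pt(x_3)=\alpha^{-1}\beta^{[\vec e_2]_-}$, so $x_3$ must have trivial weight. This forces the frozen monomial in the $x_3$ relation to have weight zero, which is arranged by giving the frozens weight $0$. It works precisely when $c\mid$ the relevant $B^T w$ entries, and the frozen entries of the $x_2$ row are then $0$ mod $c$ after reduction via Proposition \ref{prop:RFR}.

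The main obstacle is the frozen variables: the row $B^T\vec w$ has frozen entries $\vec e_1$ that need not vanish mod $c$. We plan to resolve this by proving the statement first for a really full rank choice of frozen part, which we take to be the principal coefficients augmented appropriately so that the deep points have the stated form. We show there that the $\zeta$-rescaling of $x_2$ (of order $c$) extends, using $\operatorname{char}\Bk=0$ so that the $c>1$ alternative holds. We then invoke Proposition \ref{prop:RFR} to obtain the conclusion for arbitrary $\vec e_1,\vec e_2$. The case $c=1$, $b>1$ of the second family is symmetric, with a $b$th root of unity acting on $x_1$ and $x_3$.
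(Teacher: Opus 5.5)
Your automorphism is the same as the paper's, and so is the plan of reducing via Proposition \ref{prop:RFR}. For the first family you use a primitive $c$th root of unity $\zeta$ acting on $x_2$ alone. This forces $x_0\mapsto\zeta^{-1}x_0$ and fixes $x_1$, $x_3$ and every $y_j$, so it fixes $\pt$ because $\pt(x_0)=\pt(x_2)=0$. The second family is handled symmetrically by a $b$th root of unity acting on $x_1$; this works whenever $b>1$, not only when $c=1$.

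The paragraph on the ``main obstacle'' is wrong, however.
\begin{itemize}
\item $B^T\vec w$ has one entry per mutable vertex, i.e.\ one per exchange relation. For $\vec w=\vec e_{x_2}$ it is just the $x_2$ row of the extended exchange matrix, namely $(-c,0)$.
\item The frozen data $-\vec e_2,\vec e_1$ sit in the frozen rows, and these are multiplied by $w_{y_j}=0$. So (\ref{eqn:Bwp}) holds modulo $c$ for every choice of frozen part.
\item Directly: the relation producing $x_3$ is $x_1x_3=y^{[-\vec e_2]_+}+x_2^c\,y^{[\vec e_2]_+}$. It involves $\vec e_2$, not $\vec e_1$, and it is homogeneous simply because $\zeta^c=1$.
\end{itemize}
Your direct check with all frozen weights $0$ is therefore already a complete proof. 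Zero frozen weights are forced in any case, since $\pt(y_j)=\beta_j\neq 0$. The reduction through Proposition \ref{prop:RFR} is harmless but not needed here.

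Your proposed resolution would also fail on its own terms. In the really full rank choice the paper uses, $\vec e_1=(1,0)$, so $\vec e_1\not\equiv 0 \pmod c$. Your suggestion to rescale some $y_j$ nontrivially when $c\nmid(\vec e_1)_j$ would move $\pt$. Delete that detour and the argument is sound.
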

As the Proposition ensures that there are no other deep points, this lemma will prove that there are no mysterious points for rank 2 cluster algebras over $\C$.

\par Since all of our calculations will be over $\Bk=\C$, and $\C$ is characteristic 0, we have that the condition for points of the first type to exist is that $c>1$, and the condition for points of the second type to exist is $b>1$. To ensure really full rank, we will take that $\vec e_2=[0,1]$ and $\vec e_1 = [1,0]$. By Proposition \ref{prop:RFR}, this will suffice to show the general case. Thus our exchange matrix becomes:
$$\begin{bmatrix}
    0&b\\-c&0\\0&1\\-1&0
\end{bmatrix}$$
\par We first consider points of the first type. Assume $c>1$. Let $\zeta$ be a $c$th root of unity. Note as $c>1$ we can find such a $\zeta$ with $\zeta\ne1$. Now take $\vec w=[0,1,0,0]$, and notice:
$$B^Tw=\begin{bmatrix}
    0&-c&0&-1\\b&0&1&0
\end{bmatrix}\begin{bmatrix}
    0\\1\\0\\0
\end{bmatrix}=\begin{bmatrix}
    -c\\0
\end{bmatrix}\equiv\vec0 \text{  (mod $c$)}$$
Note that thus the cluster automorphism which multiplies $(x_1,x_2,y_1,y_2)$ by $\zeta^{\vec w}$, i.e. 
$$\vp:\A\to \A$$ $$x_1\mapsto x_1,$$ $$x_2\mapsto \zeta x_2,$$ $$y_1\mapsto y_1,$$ $$y_2\mapsto y_2,$$ extends uniquely to a cluster automorphism of $\A$ by Proposition \ref{prop: claut_extension}. Furthermore, we know from the cluster relations at $x_1$ and $x_2$ that this automorphism must take $x_0\mapsto \zeta^{-1}x_0$ and $x_3\mapsto x_3$. Now let us examine the effect of this automorphism on points of the form
$$\pt:(x_0,x_1,x_2,x_3,y_1,y_2)\mapsto (0,\alpha,0,\alpha^{-1},\beta_1,\beta_2)$$
Note that $\pt\circ\vp=\pt$, and thus if $c>1$ we can find a cluster automorphism which stabilizes points of the first type, as desired.
\par Now assume $b>1$, and let $\zeta$ be a $b$th root of unity. Note as $b>1$ we can find such a $\zeta$ with $\zeta\ne1$. Now take $\vec w=[1,0,0,0]$, and notice:
$$B^Tw=\begin{bmatrix}
    0&-c&0&-1\\b&0&1&0
\end{bmatrix}\begin{bmatrix}
    1\\0\\0\\0
\end{bmatrix}=\begin{bmatrix}
    0\\b
\end{bmatrix}\equiv\vec0 \text{  (mod $b$)}$$
Note that thus the cluster automorphism which multiplies $(x_1,x_2,y_1,y_2)$ by $\zeta^{\vec w}$, i.e. 
$$\vp:\A\to \A$$ $$x_1\mapsto\zeta x_1,$$ $$x_2\mapsto  x_2,$$ $$y_1\mapsto y_1,$$ $$y_2\mapsto y_2,$$ extends uniquely to a cluster automorphism of $\A$ by Proposition \ref{prop: claut_extension}. Furthermore, we know from the cluster relations at $x_1$ and $x_2$ that this automorphism must act as $x_2\mapsto x_2$ and $x_3\mapsto \zeta^{-1}x_3$.
Now let us examine the effect of this automorphism on points of the form
$$\pt:(x_0,x_1,x_2,x_3,y_1,y_2)\mapsto (\alpha^{-1},0,\alpha,0,\beta_1,\beta_2)$$
Note that $\pt\circ\vp=\pt$, and thus if $b>1$ we can find a cluster automorphism which stabilizes points of the second type, as desired.
\par Noting that if $\pt$ is a deep point of a rank 2 cluster algebra, by Proposition \ref{prop:rk2}, it must be of one of the two types mentioned above, and for both types we can find a cluster automorphism which stabilizes it, we have shown that all rank 2 cluster algebras with the specified frozens have no mysterious points. Utilizing Proposition \ref{prop:RFR}, this shows that all rank 2 cluster algebras have no mysterious points, as desired.

\subsection{Type B}
Note that type $B_n$ is the folding of type $D_{n+1}$. Note to make $B_n$ really full rank, we must add a frozen variable $f$ to either the first node ($n$ odd) or the second node ($n$ even). For example, take the matrices of $B_5$ and $B_6$:
$$B_5: \begin{bmatrix}0&-2&0&0&0\\1&0&-1&0&0\\0&1&0&-1&0\\0&0&1&0&-1\\0&0&0&1&0\\\hline1&0&0&0&0\end{bmatrix}\;\;\;\;\;B_6:\begin{bmatrix}0&-2&0&0&0&0\\1&0&-1&0&0&0\\0&1&0&-1&0&0\\0&0&1&0&-1&0\\0&0&0&1&0&-1\\0&0&0&0&1&0\\\hline0&1&0&0&0&0\end{bmatrix}$$
\begin{proposition}
    Given a deep point $\pt$ of the really full rank type $B$ quiver with frozens as above, there exists a cluster automorphism $\vp$ which fixes $\pt$.
\end{proposition}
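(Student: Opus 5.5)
We plan to argue by induction on $n$, using Lemma \ref{lem:not_deep_leaf} to push the problem toward the end of the chain, and the rank-2 analysis of Section \ref{sec:rank 2} to handle the double edge. Label the mutable vertices $1,\dots,n$ along the path, with the double arrow between $1$ and $2$. The frozen vertex attached to $1$ (or $2$) serves only to make the matrix really full rank, so it can be carried along passively.

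\textbf{Step 1: reduce to the leaf being killed.} Let $\pt$ be deep. Vertex $n$ is a leaf attached to $n-1$. Suppose first that $\pt(x_{n-1})\ne0$. If $\pt|_{Q-(n-1)}$ is not deep, then Lemma \ref{lem:not_deep_leaf} shows that $\pt$ is not deep, a contradiction. Hence $\pt|_{Q-(n-1)}$ must be deep. Now $Q-(n-1)$ is a disjoint union of $B_{n-2}$, with its frozen, and an $A_1$. The $A_1$ factor has no deep points, so the restriction to $B_{n-2}$ is deep.

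\textbf{Step 2: lift the automorphism.} By induction, the restriction to $B_{n-2}$ (equivalently, to the cluster chart obtained by freezing $n-1$, which is an open subscheme by the locally acyclic freezing result) is fixed by a cluster automorphism given by a weight vector $w'$ with $B'^Tw'\equiv0 \pmod{p}$. We extend $w'$ by zero on $n-1$ and $n$. The rows of $B^Tw$ for $n-1$ and $n$ then involve only the weights of $n-2$, $n-1$, $n$. So we need $w'_{n-2}\equiv0$, or else we must adjust $w_{n-1}$ or $w_n$; with $w_n=\pm w'_{n-2}$ and $w_{n-1}=0$ the row-$(n-1)$ condition holds, and the row-$n$ condition reads $w_{n-1}=0$. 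Since the resulting automorphism rescales $x_n$ nontrivially, we need $\pt(x_n)=0$ whenever $w_n\ne0$. To secure this, we will apply the argument to the cluster in which $n$ is killed, mutating along the $A$-tail if necessary. Then, as in Lemma \ref{lem:fixed_sub}, the automorphism preserves the chart and fixes $\pt$.

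Suppose instead that $\pt(x_{n-1})=0$ in every cluster containing it. Then we try a $\pm1$ sign-reversing automorphism as in Section \ref{sec:pf}, taking $w$ to be the indicator of the vanishing variables and checking $B^Tw\equiv0\pmod 2$. A cleaner route is to use the unfolding $D_{n+1}\to B_n$: the $B_n$ algebra is the invariant part of the $D_{n+1}$ algebra, and type $D$ has no mysterious points by \cite{CGSS}. A deep point of $B_n$ lifts to a deep point of $D_{n+1}$. That lifted point is fixed by a cluster automorphism of $D_{n+1}$, which should be averaged or restricted so that it is compatible with the folding.

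\textbf{Base cases.} These are $B_2$, handled by Section \ref{sec:rank 2}, and $B_3$, which we will check directly from the finite list of clusters. The main obstacle is Step 2, the compatibility of the inductive weight vector with the mod-$p$ kernel condition across the glued vertices. Concretely, the danger is that the order $p$ coming from the double edge ($p=2$) must match the order from the tail; we expect that all needed automorphisms can be taken with $\zeta=-1$, which makes the bookkeeping a parity check like Figure \ref{fig: quad}.
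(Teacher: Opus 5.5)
There is a genuine gap, in two places. First, Step~2 does not close. Your inductive hypothesis only supplies \emph{some} automorphism $\varphi'$ fixing $p|_{B_{n-2}}$, with scaling factors $\lambda_i$. In $B_n$ the leaf relation is $x_nx_n'=x_{n-1}+1$. So a nontrivial rescaling of $x_n$ (which also rescales $x_n'$) can fix $p$ only if $p(x_n)=p(x_n')=0$, i.e.\ only if $p(x_{n-1})=-1$. For every other nonzero value of $p(x_{n-1})$ you need $\lambda_n=1$, and then the column-$(n-1)$ condition forces $\lambda_{n-2}=1$ (for $n\ge 4$). That is automatic when $p(x_{n-2})\neq 0$. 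When $p(x_{n-2})=0$, an existential hypothesis gives you no control over $\lambda_{n-2}$. ``Mutating along the $A$-tail'' cannot help either: the tail beyond $x_{n-1}$ is the single vertex $x_n$, and $p(x_{n-1})$ is already fixed.

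Relatedly, once $x_{n-1}$ is frozen the $A_1$ factor \emph{does} have deep points, exactly when $p(x_{n-1})=-1$. Your conclusion that $p|_{B_{n-2}}$ is deep is still correct, but it comes straight from the contrapositive of Lemma~\ref{lem:not_deep_leaf}, whose $Q-v$ is already $B_{n-2}$.

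Second, the case $p(x_{n-1})=0$ is not actually handled. The mod-$2$ check for the indicator vector is never carried out, and the folding route rests on unproved claims. A $G$-fixed lift of a deep point of $B_n$ is only known to vanish somewhere in each $G$-invariant cluster of $D_{n+1}$, so its deepness there needs an argument. There is also no averaging in a multiplicative stabilizer group: $\varphi\cdot g\varphi g^{-1}$ is $G$-equivariant but can be trivial, e.g.\ if $G$ acts on the stabilizer by inversion. Finally, $\mathcal{A}_{B_n}$ is the specialization of $\mathcal{A}_{D_{n+1}}$ identifying orbit variables, not an invariant subring.

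The missing idea is that one explicit automorphism fixes \emph{every} deep point, so no gluing of automorphisms is needed. Row $x_1$ of $\tilde B$ has a single nonzero entry, $b_{12}=-2$. So $w=e_1$ with $\zeta=-1$ gives $B^Tw=(0,-2,0,\dots,0)^T\equiv 0 \pmod 2$. The resulting automorphism negates $x_1$ and $x_1'$ and fixes all other initial and once-mutated variables. By acyclicity it therefore fixes every point with $p(x_1)=p(x_1')=0$.

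The paper then shows by induction that a point with $p(x_1)\neq0$ or $p(x_1')\neq0$ is not deep. Freeze there to get type $A_{n-1}$. If the restriction can still be deep, then $p(x_{n-1})\neq 0$, and freezing at $x_{n-1}$ together with Lemma~\ref{lem:not_deep_leaf} drops you to $B_{n-2}$. Non-deepness, unlike ``fixed by some automorphism,'' passes cleanly through the leaf lemma. Your Step~1 is exactly this leaf step, but it has to be run on the statement ``deep implies $p(x_1)=p(x_1')=0$.'' The hypothesis $p(x_{n-1})\neq0$ is then reached by first freezing at $x_1$, rather than by a separate case.
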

The double edge (connecting $x_1$ and $x_2$ in Figure \ref{fig:type_b5b6}) is a separating edge, i.e. if we look at the mutation relation at $x_1$, we have:
$$x_1x_1'=x_2\cdot f+1,$$
for odd $n$ and
$$x_1x_1'=x_2+1,$$
for even $n$.
This means that $p(x_1)$ and $p(x_2)$ can not both simultaneously be 0. We will show that all points with $p(x_1)=0,p(x_1')=0$ which are deep are fixed under a cluster automorphism. Then we will show that all points for which either $p(x_1)\ne0$ or $p(x_1')\ne0$ are not deep by induction.

\begin{figure}[!h]
    \centering
    \begin{tikzpicture}
    
    \begin{scope}[yshift=0cm]
    \node at (-0.7,0) {B$_5$};
    \node[square, label=above :$f$] (B9) at (0,0) {};
    \foreach \x in {1,...,5} \node[node, label=above :$x_\x$] (B\x) at (\x,0) {};
    \draw[-{latex}] (B9)to(B1);
    \foreach \x in {2,...,4} \draw[-{latex}]  (B\the\numexpr\x+1\relax) to (B\x);
    \draw[double, double distance=2pt, postaction={decorate,
            decoration={
                markings,
                mark=at position 0.65 with {\arrow[scale=0.4]{Stealth[]}}
            }
        }] (B2) -- (B1);
\end{scope}

\begin{scope}[yshift=-1cm]
    \node at (-0.7,0) {B$_6$};
    \node[square, label=right :$f$] (B9) at (1,-1) {};
    \foreach \x in {1,...,6} \node[node, label=above :$x_\x$] (B\x) at (\x-1,0) {};
    \draw[-{latex}] (B9) to (B2);
    \foreach \x in {2,...,5} \draw[-{latex}]  (B\the\numexpr\x+1\relax) to (B\x);
    \draw[double, double distance=2pt, postaction={decorate,
            decoration={
                markings,
                mark=at position 0.65 with {\arrow[scale=0.4]{Stealth[]}}
            }
        }] (B2) -- (B1);
\end{scope}
    \end{tikzpicture}
   
    \caption{Example quivers for $B_5$ and $B_6$.}
    \label{fig:type_b5b6}
\end{figure}

\begin{lemma}
    If $\pt$ is a point of the really full rank type $B$ quiver with frozens as above with $\pt(x_1)=0$ and $\pt(x_1')=0$, then there exists a cluster automorphism $\vp$ fixing $\pt$.
\end{lemma}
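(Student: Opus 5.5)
The plan is to fix $\pt$ with the sign-reversing rescaling at $x_1$ from Proposition \ref{prop: claut_extension}, and then check that this automorphism fixes $\pt$ on a finite generating set supplied by acyclicity.

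\textbf{Defining the automorphism.} Work in the initial seed shown in Figure \ref{fig:type_b5b6}. Take $\zeta=-1$ and $\vec w=\vec e_{x_1}$, the indicator vector of $x_1$. Then $B^T\vec w$ is the row of $\tilde B$ indexed by $x_1$, namely $(0,-2,0,\dots,0)$. This vector is $\equiv \vec 0 \pmod 2$ for both parities of $n$, since the frozen $f$ only contributes a row, not a column. So by Proposition \ref{prop: claut_extension} the toric map
$$\vp: x_1\mapsto -x_1,\qquad x_i\mapsto x_i\ (i\ge 2),\qquad f\mapsto f$$
extends uniquely to a cluster automorphism of $\A$. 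Moreover $\vp\neq \mathrm{id}$.

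\textbf{Reducing to finitely many generators.} Showing $\pt\circ\vp=\pt$ means checking it on a generating set of $\A$, and a priori $\A$ is generated by all cluster variables. The initial type $B$ quiver above is acyclic (a linear orientation with a single double edge). So by \cite{BFZ}, as already used in Section \ref{sec:rank 2}, $\A$ is generated by $x_1,\dots,x_n$, the once-mutated variables $x_1',\dots,x_n'$, and $f^{\pm1}$. It therefore suffices to compute $\vp$ on these generators.

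\textbf{Computing $\vp$ on the generators.} $\vp$ acts as $(-1)^{\deg}$ for the $\Z/2$-grading in which $x_1$ is odd and everything else in the initial seed is even; each exchange relation is homogeneous for this grading. Now read off the exchange relations:
\begin{itemize}
\item At $x_1$: $x_1x_1'=x_2f+1$ (odd $n$) or $x_1x_1'=x_2+1$ (even $n$). The right side is even, so $x_1'$ is odd and $\vp(x_1')=-x_1'$.
\item At $x_2$: $x_1$ appears only through $x_1^2$ (coming from $b_{12}=-2$), e.g.\ $x_2x_2'=x_3+x_1^2$. So $x_2'$ is even.
\item At $x_i$ for $i\ge 3$: $x_1$ does not appear, so $x_i'$ is even.
\end{itemize}
Hence $\vp$ negates exactly $x_1$ and $x_1'$ and fixes every other generator.

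\textbf{Conclusion.} By hypothesis $\pt(x_1)=\pt(x_1')=0$, so $\pt\circ\vp$ and $\pt$ agree on all generators. Therefore $\pt\circ\vp=\pt$, i.e.\ $\vp$ fixes $\pt$.

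The main point needing care is the reduction to the finite generating set. Without the acyclic presentation from \cite{BFZ}, one would have to know $\vp$ on every cluster variable and verify that each negated variable vanishes at $\pt$. The grading argument then shows that only $x_1$ and $x_1'$ are negated among the generators, and these are precisely the variables that vanish at $\pt$.
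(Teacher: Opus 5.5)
Your proposal is correct and follows the paper's proof. You use the same rescaling $\zeta=-1$, $\vec w$ the indicator of $x_1$, justified by $B^T\vec w=(0,-2,0,\dots,0)\equiv\vec 0 \pmod 2$, and the same appeal to acyclicity to reduce to the initial and once-mutated variables, on which $\vp$ negates only $x_1$ and $x_1'$; your check via the exchange relations at $x_2,\dots,x_n$ that the other $x_i'$ are fixed fills in a step the paper only asserts.
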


\begin{proof} 
Let $\pt$ be any point with $\pt(x_1)=\pt(x_1')=0$. Now, let $\vec w=(1,0,...,0)$, and let $\zeta=-1$. Now note that $\zeta^2=1$, and:
$$B^Tw=\left[
\begin{array}{ccccccc|c}
    0&1&0&0&...&0&0&\mathds{1}_{n\not\in2\Z}
    \\-2&0&1&0&...&0&0&\mathds{1}_{n\in2\Z}\\
    0&-1&0&1&\dots&0&0&0\\
    0&0&-1&0&\dots&0&0&0\\
    \vdots&\vdots&\vdots&\vdots&\ddots&&\vdots&\vdots\\
    0&0&0&0&&0&1&0\\
    0&0&0&0&...&-1&0&0
\end{array}\right]\begin{bmatrix}
    1\\0\\0\\0\\\vdots\\0\\0\\\hline0
\end{bmatrix}=\begin{bmatrix}
    0\\-2\\0\\0\\\vdots\\0\\0
\end{bmatrix}\equiv\vec0\text{ mod 2}$$
This means that the automorphism which negates $x_1$ (and thus $x_1'$) and preserves all other cluster variables in the initial seed and once mutations extends uniquely to an automorphism on the variety. Now, note that any cluster variable can be written as a polynomial in the initial seed and once mutations, as type $B$ cluster algebras are acyclic. Notably, if $\pt(x_1)=\pt(x_1')=0$, the point $\pt$ is preserved under the action of this cluster automorphism. Thus any point $\pt$ with $\pt(x_1)=\pt(x_1')=0$ can not be mysterious. 

\end{proof}
\begin{lemma} Let $\pt$ be a point in the type $B$ cluster variety. Furthermore, assume $\pt(x_1)\ne 0$ or $\pt(x_1')\ne0$. Then $\pt$ is not deep.\end{lemma}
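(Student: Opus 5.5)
We argue by induction on $n$, using Lemma \ref{lem:not_deep_leaf} as the inductive engine and the rank 2 analysis of Section \ref{sec:rank 2} (type $B_2$) as the base case. Since the exchange relation at $x_1$ is $x_1x_1'=x_2 f+1$ (or $x_2+1$), the vertex $x_1$ and its once-mutated partner $x_1'$ play symmetric roles. Mutating at $x_1$ turns the initial seed into another seed of the same shape: the double edge is reversed, $x_1$ is still a leaf attached to $x_2$, and the frozen $f$ is still attached as before up to orientation. So, after possibly replacing the initial seed by its mutation at $x_1$, we may assume $\pt(x_1)\neq 0$.

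Freezing at $x_1$, the cluster chart of Muller's Lemma 3.4 has mutable part the path $x_2\to\cdots\to x_n$, i.e.\ type $A_{n-1}$ (with the frozen $f$ and the now-frozen $x_1$ attached). By \cite[Corollary 5.20]{CGSS}, and in fact already by Proposition \ref{prop: unpunc}, since $A_{n-1}$ is a disc, type $A$ has no mysterious points. This alone is not enough: we need \emph{not deep}, not merely "not mysterious".

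So we use the structure more cleverly. Write the quiver as $x_1 - x_2 - Q''$, and apply Lemma \ref{lem:not_deep_leaf} with $v=x_2$ and leaf $v'=x_1$. We need $\pt(x_2)\neq 0$ and $\pt|_{Q-x_2}$ not deep, where $Q-x_2$ is a disjoint union of a point $x_1$ and $A_{n-2}$. This is awkward, so instead we plan to iterate along the path from the $x_1$ end. The point $\pt$ lies in the chart where $x_1$ is invertible, which is the type $A_{n-1}$ cluster variety on $x_2,\dots,x_n$ with frozens $x_1,f$. Deep points of this chart are deep points of type $A$. These are governed by the $1$-or-$3$ rule of Lemma \ref{lem: 1-or-3} on a polygon, and are stabilized by the sign-reversing automorphism $\sgclaut$. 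The key step is then to show that $\sgclaut$ on this $A_{n-1}$ chart cannot actually fix a point when the frozen $x_1$ is generic. Concretely, the extra arrow $x_1\to x_2$ and the really-full-rank frozen make the $B^Tw\equiv 0 \pmod 2$ condition fail unless $w_{x_2}$-neighbors are balanced. We will translate this into the existence of a seed of the $A_{n-1}$ chart, reachable by mutations away from $x_1$, in which $\pt$ is nonzero on $x_2,\dots,x_n$. Together with $\pt(x_1)\neq 0$, this makes $\pt$ non-deep.

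The main obstacle is exactly this middle step: deep points of a type $A$ chart do exist in general, so we must use that the double edge alters the relevant exchange relations. We would first try to push Lemma \ref{lem:not_deep_leaf} recursively: mutate within $x_2,\dots,x_n$ to a seed where $x_1$ remains a leaf at $x_2$ and $\pt(x_2)\neq 0$, using the relation $x_1x_1'=x_2f+1$ together with $\pt(x_1)\neq0$ to force nonvanishing of $x_2$ or its mutation. We would then apply induction on $B_{n-1}$ for the remaining vertices, with $x_2$ now playing the role of the new double-edge endpoint after a suitable mutation. If this fails, the fallback is a direct case analysis of the type $B$ exchange graph, viewed as centrally symmetric triangulations of a $(2n+2)$-gon. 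In that model a deep point must satisfy a symmetric $1$-or-$3$ condition, and we would show this forces vanishing on the diameters $x_1$ and $x_1'$ simultaneously.
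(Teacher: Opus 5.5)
Your reduction to $\pt(x_1)\neq 0$ and the passage to the chart with $x_1$ frozen (mutable part $A_{n-1}$ on $x_2,\dots,x_n$) are fine. The step you single out as key, however, is false when $n$ is even, so the proof cannot be completed along these lines.

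For $n$ even the chart has odd rank and genuinely has deep points. So no seed containing $x_1$ can certify that $\pt$ is not deep; the witnessing seed must leave the chart. Already for $B_2$, take $x_1x_1'=x_2+1$ and $x_2x_2'=f+x_1^2$, and the point $x_1=\alpha\neq0$, $x_1'=\alpha^{-1}$, $x_2=x_2'=0$, $f=-\alpha^2$. This is well defined, since $B_2$ is acyclic and presented by these generators and the two exchange relations. The point vanishes in both seeds containing $x_1$. It is nevertheless not deep: the variable $x_2''$ obtained by mutating $(x_1',x_2)$ at $x_2$ satisfies $x_1^2x_2''=x_2f+2f+x_2'$, so $\pt$ takes the values $(\alpha^{-1},-2)$ on the seed $(x_1',x_2'')$.

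None of the tools you list can produce such a seed:
\begin{itemize}
\item Whether $\sgclaut$ extends to $\A_{B_n}$ (i.e.\ whether $B^Tw\equiv 0\bmod 2$) is a statement about automorphisms. It says nothing about whether $\pt$ lies in a cluster torus, and on the chart itself $\sgclaut$ fixes the chart's deep points by construction.
\item The relation $x_1x_1'=x_2f+1$ with $\pt(x_1)\neq 0$ imposes nothing on $x_2$.
\item Once $x_1$ is frozen the mutable part is simply laced, so there is no $B_{n-1}$ on $x_2,\dots,x_n$ to induct on.
\end{itemize}
The fallback only restates the goal. The $1$-or-$3$ lemma concerns unpunctured surfaces, whereas in the paper's convention $B_n$ unfolds to $D_{n+1}$, a punctured disc. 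Moreover, deepness in $B_n$ only constrains the symmetric seeds of the unfolding.

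The missing idea is to split on the parity of $n$ and then work at the \emph{other} end of the path. If $n$ is odd, the $x_1$-frozen chart is $A_{n-1}$ of even rank, which has no deep points, so $\pt$ is already not deep. This is the only case where your plan of finding a good seed inside that chart works. If $n$ is even and $\pt$ is deep in that chart, the alternating vanishing pattern of type $A$ deep points gives $\pt(x_{n-1})\neq 0$. Now freeze $x_{n-1}$ in $B_n$ instead. This leaves $B_{n-2}$ on $x_1,\dots,x_{n-2}$, still with $\pt(x_1)\neq 0$, together with the leaf $x_n$ at $x_{n-1}$. Apply Lemma~\ref{lem:not_deep_leaf} with $v=x_{n-1}$, $v'=x_n$, and induct in steps of two down to the $B_2$ case of Proposition~\ref{prop:rk2}; this shows $\pt$ is not deep. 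This is the paper's argument.
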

\begin{proof}
    We will proceed by induction on the rank of the cluster algebra. For the base case, notice from Proposition \ref{prop:rk2} that for rank 2, over $\C$ (as in this case, $b=2$, $c=1$ so all deep points are of the second type), any such point can not be deep. This finishes the base case. Now, note that, after freezing at $x_1$ (or $x_1'$), the resulting quiver is isomorphic to type $A_{n-1}$. Now, we note that the deep points of type $A_{n-1}$ are such that $\pt(x_{i})=0$ for all $i$ even (in this naming convention), and notably, if $n-1$ is even, there are no deep points. If $n-1$ is odd, and $\pt$ on $A_{n-1}$ is deep, then $\pt(x_{n-1})\ne0$. This means, in the original quiver, we can freeze at $x_{n-1}$. The resulting quiver will be type $B_{n-2}$ with a disjoint vertex, as $x_{n-1}$ has a leaf to $x_{n}$. Now, we use Lemma \ref{lem:not_deep_leaf} to note that by induction, as type $B_{n-2}$ has no deep points with $\pt(x_1)\ne0$ (or $\pt(x_1')$), that type $B_n$ will have no deep points with $\pt(x_1)\ne0$ (or $\pt(x_1')\ne0$), as desired.
\par Noting that any point with $\pt(x_1)=\pt(x_1')=0$ has a non-trivial automorphism which stabilizes it and all other points are not deep, we have shown that there are no mysterious points in the type $B_n$ cluster variety.
\end{proof}
\subsection{Type C}

Note that type $C_n$ is the folding of type $A_{2n-1}$. We will take a source-sink orientation of the type $C$ quiver. Note to make $C_n$ really full rank, we must add a frozen variable $f$ to the first node regardless of the parity of $n$. For example, take the matrices of $C_5$ and $C_6$:
$$C_5: \begin{bmatrix}0&-1&0&0&0\\2&0&1&0&0\\0&-1&0&-1&0\\0&0&1&0&1\\0&0&0&-1&0\\\hline1&0&0&0&0\end{bmatrix}\;\;\;\;\;C_6:\begin{bmatrix}0&-1&0&0&0&0\\2&0&1&0&0&0\\0&-1&0&-1&0&0\\0&0&1&0&1&0\\0&0&0&-1&0&-1\\0&0&0&0&1&0\\\hline1&0&0&0&0&0\end{bmatrix}$$
We will utilize the naming convention given in Figure \ref{fig:type_C5C6}.
\begin{figure}[!h]
    \centering
    \begin{tikzpicture}
    
    \begin{scope}[yshift=0cm]
    \node at (-0.7,0) {C$_5$};
    \node[square, label=above :$f$] (B9) at (0,0) {};
    \foreach \x in {0,...,4} \node[node, label=above :$x_\x$] (B\x) at (\x+1,0) {};
    \foreach \x in {1,3} \draw[-{latex}]  (B\the\numexpr\x+1\relax) to (B\x) ;
    \draw[-{latex}]  (B9) to (B0) ;
    \draw[-{latex}]  (B2) to (B3);
    \draw[double, double distance=2pt, postaction={decorate,
            decoration={
                markings,
                mark=at position 0.65 with {\arrow[scale=0.4]{Stealth[]}}
            }
        }] (B0) -- (B1);
\end{scope}

\begin{scope}[yshift=-1cm]
    \node at (-0.7,0) {C$_6$};
    \node[square, label=above :$f$] (B9) at (0,0) {};
    \foreach \x in {0,...,5} \node[node, label=above :$x_\x$] (B\x) at (\x+1,0) {};
    \foreach \x in {2,4} \draw[-{latex}] (B\x)to (B\the\numexpr\x+1\relax) ;
    \draw[-{latex}] (B9) to (B0);
    \foreach \x in {2,4} \draw[-{latex}] (B\x)to (B\the\numexpr\x-1\relax) ;
    \draw[double, double distance=2pt, postaction={decorate,
            decoration={
                markings,
                mark=at position 0.65 with {\arrow[scale=0.4]{Stealth[]}}
            }
        }] (B0) -- (B1);
\end{scope}
    \end{tikzpicture}
   
    \caption{Type $C_5$ and $C_6$ quivers in source-sink orientation. We call the double node $x_0$.}
    \label{fig:type_C5C6}
\end{figure}
\par We will show that type $C$ cluster varieties have no mysterious points with a series of lemmas.
We will show that the points $\pt(x_i)=\pt(x_i')=0$ for $i$ even are stabilized by a cluster automorphism when $n$ odd, the points where $\pt(x_i)=\pt(x_i')=0$ for $i$ odd are stabilized by a cluster automorphism when $n$ even, and all other points are not deep.
\begin{lemma}
    If $n$ is odd, and $\pt$ is a point in the type $C_n$ cluster variety with frozens as above with $\pt(x_i)=\pt(x_i')=0$ for all $i$ even, then there exists $\vp$ such that $\vp(\pt)=\pt$.
    \label{lem:C_n_odd}
\end{lemma}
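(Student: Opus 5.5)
We would follow the pattern of the type $B$ argument. The idea is to build a sign-reversing cluster automorphism supported on the even-indexed vertices, with $\zeta=-1$, and then observe that $\pt$ is fixed by it because $\pt$ vanishes on every variable whose sign is changed.

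Concretely, we set $\zeta=-1$ and take $\vec w\in\Z^{n+1}$ to be the indicator vector of the mutable vertices $x_0,x_2,x_4,\dots,x_{n-1}$, with weight $0$ on $x_1,x_3,\dots,x_{n-2}$ and on the frozen $f$. Since $n$ is odd, $n-1$ is even, so both endpoints $x_0$ and $x_{n-1}$ of the path carry weight $1$. We then verify $B^T\vec w\equiv\vec 0 \pmod 2$ column by column, where the $j$th entry is $\sum_i b_{ij}w_i$.

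For an even column $j$, the only contributions come from neighbours of $x_j$, which are all odd. The frozen $f$, adjacent only to $x_0$, also carries weight $0$. So the entry is $0$.

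For an odd column $j$, the vertex $x_j$ is interior to the path, and both of its neighbours $x_{j-1},x_{j+1}$ are even. This holds precisely because $n$ is odd, so the path ends at an even vertex. In the source--sink orientation both arrows point the same way relative to $x_j$, so the entry is $\pm 2$. For $j=1$ we must use the entry $b_{01}=-1$ of the double edge, not $b_{10}=2$; it still gives $b_{01}+b_{21}=-2$. Hence every entry is $0$ or $\pm2$, which is $\equiv 0 \pmod 2$. This bookkeeping, in particular that the double edge enters column $1$ with coefficient $1$ and that odd $n$ is exactly what prevents a lone odd endpoint, is the only real point to check.

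By Proposition \ref{prop: claut_extension}, the toric map $x_i\mapsto(-1)^{w_i}x_i$ on the initial seed extends uniquely to a cluster automorphism $\vp$ of $\A$. It is nontrivial since $-1\neq1$.

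Next we read off $\vp$ on the once-mutated variables. For even $i$, the exchange relation $x_ix_i'=M_1+M_2$ has monomials in the odd neighbours of $x_i$ and in $f$, all fixed by $\vp$. Therefore $\vp(x_i')=-x_i'$. For odd $i$, both monomials are scaled by the same sign, by the computation above, so $x_i'$ is rescaled by a sign, and uniqueness pins this down.

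Since $C_n$ is acyclic, by \cite{BFZ} the algebra $\A$ is generated by the initial cluster variables, their once-mutations, and $f^{\pm1}$. So $\pt\circ\vp$ is determined by its values on these generators. There $\pt\circ\vp$ agrees with $\pt$: on the even $x_i,x_i'$ the two sides are $\pm 0=0$, and the remaining generators are rescaled by factors that we check to be $1$, or else they are evaluated on vanishing monomials. We conclude $\vp(\pt)=\pt$.
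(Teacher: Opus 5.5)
Your proposal is correct and is essentially the paper's proof: you use the same $\vec w=(1,0,1,\dots,0,1)$ with $\zeta=-1$, the same check $B^T\vec w=(0,-2,0,\dots,-2,0)^T\equiv\vec 0 \pmod 2$ (with the double edge correctly entering column $1$ as $b_{01}=-1$), and the same extension via Proposition~\ref{prop: claut_extension}. Your extra step, computing $\vp$ on the once-mutated variables and invoking acyclic generation from \cite{BFZ}, only makes explicit what the paper leaves implicit here and spells out in its type $B$ lemma; in the source--sink orientation one exchange monomial at each odd vertex is $1$, so the odd $x_i'$ are simply fixed and your hedge about vanishing monomials is unnecessary.
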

\begin{proof}
    As in the lemma statement, assume that $\pt(x_i)=\pt(x_i')=0$ for $i$ even and $n$ odd. Take $\vec w=(1,0,1,0\dots,0,1)$, and $\zeta=-1$. We can calculate the following.
$$B^Tw=\left[
\begin{array}{ccccccc|c}
    0&2&0&0&...&0&0&1\\
    -1&0&-1&0&...&0&0&0\\
    0&1&0&1&...&0&0&0\\
    0&0&-1&0&...&0&0&0\\
    \vdots&\vdots&\vdots&\vdots&\ddots&&\vdots&\vdots\\0&0&0&0&&0&-1&0
    \\
    0&0&0&0&...&1&0&0
\end{array}\right]\begin{bmatrix}
    1\\0\\1\\0\\\vdots\\0\\1\\\hline 0
\end{bmatrix}=\begin{bmatrix}
    0\\-2\\0\\-2\\\vdots\\-2\\0
\end{bmatrix}\equiv\vec0\text{ mod 2}$$
This means that the cluster automorphism which negates the even cluster variables and their once mutations extends uniquely to a cluster automorphism on $C_n$ when $n$ odd, and thus any point with $\pt(x_i)=\pt(x_i')=0$ for $i$ even, has a non-trivial cluster automorphism which fixes it.
\end{proof}
\begin{lemma}
    If $n$ is even, and $\pt$ is a point in the type $C_n$ cluster variety with frozens as above with $\pt(x_i)=\pt(x_i')=0$ for all $i$ odd, then there exists $\vp$ such that $\vp(\pt)=\pt$.
    \label{lem:C_n_even}
\end{lemma}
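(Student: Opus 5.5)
The plan is to follow the proof of Lemma \ref{lem:C_n_odd}, swapping the parity of the indicator vector. We use the source-sink oriented $C_n$ quiver of Figure \ref{fig:type_C5C6}, with vertices $x_0,\dots,x_{n-1}$ and the frozen $f$ attached to $x_0$. Take $\zeta=-1$ and let $\vec w$ be the indicator of the odd-indexed mutable vertices $x_1,x_3,\dots,x_{n-1}$, with $w_f=0$. Since $n$ is even, $n-1$ is odd, so the last vertex $x_{n-1}$ lies in the support of $\vec w$.

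The main step is to check $B^T\vec w\equiv\vec 0 \pmod 2$, so that Proposition \ref{prop: claut_extension} applies. The entry of $B^T\vec w$ indexed by a mutable vertex $x_j$ is $\sum_i b_{ij}w_i$, the signed sum over the neighbours of $x_j$ that lie in the support of $\vec w$. We check it case by case.
\begin{itemize}
\item Column $x_0$. Its only mutable neighbour is $x_1$, joined by the double edge with $|b_{x_1,x_0}|=2$. The frozen neighbour $f$ has weight $0$. The entry is $\pm2\equiv0$.
\item Odd columns $x_{2k+1}$. All neighbours ($x_{2k}$, $x_{2k+2}$, and for $x_1$ also $x_0$) are even-indexed, so they have weight $0$. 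The entry is $0$.
\item Even columns $x_{2k}$ with $k\ge1$. Here $2k\le n-2$, so both neighbours $x_{2k\pm1}$ exist and are odd. Each contributes $\pm1$, so the entry is $0$ or $\pm2$, which is even.
\end{itemize}
Evenness of $n$ is used exactly in the last case: the final vertex is odd, so no even vertex is left with a single odd neighbour. (In the odd-$n$ case of Lemma \ref{lem:C_n_odd} the roles were reversed.) Bookkeeping the sign conventions of the source-sink orientation is the only real obstacle here, and it is harmless because we only need parity.

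By Proposition \ref{prop: claut_extension}, the rescaling $x_i\mapsto -x_i$ for odd $i$, with all other initial variables fixed, extends uniquely to a cluster automorphism $\vp$. Next we determine $\vp$ on the once-mutated variables. In the exchange relation $x_ix_i'=M_1+M_2$, both monomials have the same parity of $\vec w$-weight by the computation above. Hence $\vp(x_i')=-x_i'$ for $i$ odd and $\vp(x_i')=x_i'$ for $i$ even. Since $C_n$ is acyclic, the algebra is generated by the initial cluster variables and their once-mutations \cite{BFZ}, so these values determine $\vp$ on all of $\A$.

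Finally, $\vp$ fixes $\pt$. On every generator, $\pt\circ\vp$ agrees with $\pt$: on even-indexed variables, their mutations, and $f$ this holds because $\vp$ fixes them, and on odd-indexed variables and their mutations it holds because $\pt$ vanishes there by hypothesis. Thus $\vp(\pt)=\pt$, and $\vp$ is nontrivial since it negates $x_1$.
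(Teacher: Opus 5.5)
Your proposal is correct and is the paper's argument: the same $\vec w$ supported on the odd-indexed vertices, $\zeta=-1$, the check $B^T\vec w\equiv\vec 0 \pmod 2$, and the extension criterion for toric rescalings. Your computation of $\vp$ on the $x_i'$ and the appeal to acyclic generation spell out what the paper leaves implicit. One small tightening: $\vp(x_{2k}')=x_{2k}'$ needs the two monomials in the exchange relation at $x_{2k}$ to have \emph{even} $\vec w$-weight, not merely equal parity; this holds because in the source-sink orientation both odd neighbours of $x_{2k}$ lie in the same monomial, so that entry of $B^T\vec w$ is $\pm 2$ and never $0$ (an entry of $0$ would give $\vp(x_{2k}')=-x_{2k}'$, so the orientation matters beyond parity here).
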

\begin{proof} 
Assume, as in the problem statement, that $\pt(x_i)=\pt(x_i')=0$ for $i$ odd and $n$ even.  Take $\vec w=(0,1,0,1\dots,0,1)$and $\zeta=-1$. We can calculate the following.
$$B^Tw=\left[
\begin{array}{ccccccc|c}
    0&2&0&0&...&0&0&1\\
    -1&0&-1&0&...&0&0&0\\
    0&1&0&1&...&0&0&0\\
    0&0&-1&0&...&0&0&0\\
    \vdots&\vdots&\vdots&\vdots&\ddots&&\vdots&\vdots\\0&0&0&0&&0&1&0
    \\
    0&0&0&0&...&-1&0&0
\end{array}\right]\begin{bmatrix}
    0\\1\\0\\1\\\vdots\\0\\1\\\hline 0
\end{bmatrix}=\begin{bmatrix}
    2\\0\\2\\0\\\vdots\\2\\0\end{bmatrix}\equiv\vec0\text{ mod 2}$$

This means that the cluster automorphism which negates the odd cluster variables and their once mutations extends uniquely to a cluster automorphism on $C_n$ when $n$ even, and thus any point with $\pt(x_i)=\pt(x_i')=0$ for $i$ odd, has a non-trivial cluster automorphism which fixes it.
\end{proof}
\begin{lemma}
    Points in the type $C$ cluster variety, with frozens as above, not of the types in Lemmas \ref{lem:C_n_odd} and \ref{lem:C_n_even} are not deep.
\end{lemma}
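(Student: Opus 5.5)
The plan is to imitate the type $B$ argument: induct on $n$, using Lemma \ref{lem:not_deep_leaf}, freezings at non-vanishing vertices, and the known deep points of type $A$. Let $\pt$ be a deep point of $C_n$ not of the form in Lemmas \ref{lem:C_n_odd} or \ref{lem:C_n_even}; we aim for a contradiction. The base case $n=2$ is Proposition \ref{prop:rk2}. In the matrix above $b=1$ and $c=2$, so every deep point is of the first type, with $\pt(x_0)=\pt(x_0')=0$ in our labeling. When $n=2$ these are exactly the points of Lemma \ref{lem:C_n_even}, so every other point is not deep. For $n=3$ I would do the same kind of check by hand or by a further freezing.

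For the inductive step, look first at the double edge. The exchange relation at $x_1$ is $x_1x_1'=x_0^2x_2+1$ (up to orientation), and at $x_0$ it is $x_0x_0'=fx_1+1$. Hence $\pt(x_0)$ and $\pt(x_1)$ cannot both vanish. I split into cases according to whether $\pt(x_0)\neq0$ or $\pt(x_0')\neq0$.

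\textbf{Case $\pt(x_0)\ne0$ or $\pt(x_0')\ne0$.} Freeze at $x_0$ (or at $x_0'$). What remains is a type $A_{n-1}$ quiver on $x_1,\dots,x_{n-1}$ with frozens attached. By the known description of deep points in type $A$ (the $1$-or-$3$ pattern of Lemma \ref{lem: 1-or-3} in a polygon), the restricted point is deep only when $n-1$ is odd and it vanishes on alternate variables. In that situation the end variable $x_{n-1}$ does not vanish. We then freeze at $x_{n-1}$, whose neighbor $x_{n-2}$ is a leaf toward it, leaving $C_{n-2}$ plus a disjoint piece. By induction the restriction to $C_{n-2}$ either is not deep, and then Lemma \ref{lem:not_deep_leaf} shows $\pt$ is not deep, or it is of the alternating form of Lemma \ref{lem:C_n_odd}/\ref{lem:C_n_even} for $C_{n-2}$. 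Since $n$ and $n-2$ have the same parity, the second alternative together with the vanishing pattern forced on the $A_{n-1}$ part should force $\pt$ itself into the excluded alternating form, a contradiction.

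\textbf{Case $\pt(x_0)=\pt(x_0')=0$.} Then $\pt(x_1)\ne0$ and $\pt(x_1')\ne0$. Freezing at $x_1$ splits the quiver into $C_1\cong A_1$ on $x_0$ and $A_{n-2}$ on $x_2,\dots,x_{n-1}$. The $A_{n-2}$ part must again carry the alternating type $A$ deep pattern, or be non-deep; in the non-deep case Lemma \ref{lem:not_deep_leaf} applies. The alternating pattern on $x_2,\dots$ with $x_0$ vanishing should reproduce exactly the configuration of Lemma \ref{lem:C_n_odd} or \ref{lem:C_n_even}, depending on the parity of $n$ and on which endpoint of the $A$-chain survives.

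I expect the main obstacle to be the parity bookkeeping. The two lemmas are stated in terms of the vanishing of $x_i$ and $x_i'$ in the initial source-sink seed. I must show that the type $A$ deep patterns found in the frozen charts force $\pt(x_i)=\pt(x_i')=0$ on the correct parity class. I must also show that the mixed configurations, such as alternation starting at the wrong end, actually contradict the double-edge relation or the leaf relation at $x_{n-1}$. I would handle this by first recording the $A_m$ deep points explicitly, as alternating zeros at the even positions with $m$ odd. I would then verify the compatibility with the exchange relations at $x_0$ and $x_1$ directly.
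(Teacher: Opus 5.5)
There is a genuine gap, and it sits in your Case B. After you freeze $x_1$, the component on $x_0$ is an $A_1$ whose two cluster variables $x_0,x_0'$ both vanish at $\pt$. So $\pt|_{Q-x_1}$ is deep, and Lemma~\ref{lem:not_deep_leaf} yields nothing. Treating $x_0$ instead as a leaf on the chain $x_1,\dots,x_{n-1}$ also fails. The proof of that lemma needs the two-vertex chart $\{v,v'\}$ to be of type $A_2$, but $\{x_0,x_1\}$ spans a $C_2$ chart, and $C_2$ has deep points.

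The missing step is the paper's $j=1$ argument. Mutate the non-deep $A_{n-2}$ part to a seed on which $\pt$ is nonvanishing, freeze it, and thaw $x_1$. Then Proposition~\ref{prop:rk2} with $c=2$, $b=1$ shows that this $C_2$ has no deep points with $\pt(x_1)\neq0$.

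That fact depends on $x_1$ being the variable carrying the exponent $2$, and your exchange relations have this backwards. The paper's matrix gives $x_0x_0'=fx_1^2+1$ and $x_1x_1'=1+x_0x_2$. Accordingly, the first-type deep points of $C_2$ are those with $\pt(x_1)=\pt(x_1')=0$, which is why they match Lemma~\ref{lem:C_n_even} at $n=2$. Your base case says they have $\pt(x_0)=\pt(x_0')=0$ and also claims they match that lemma, which is inconsistent. Under your relations, the $C_2$ deep points would be exactly those where $x_0$ and its flip vanish, i.e.\ your Case B hypothesis, so this repair would not be available.

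Case A has a separate indexing error. On a linear $A_m$ with $m$ odd, deep points vanish at positions $1,3,\dots,m$. So on $x_1,\dots,x_{n-1}$ the end variable $x_{n-1}$ does vanish and $x_{n-2}$ does not. You must freeze $x_{n-2}$, with $x_{n-1}$ as the leaf, to leave $C_{n-2}$. The type $B$ proof freezes $x_{n-1}$ only because its labels start at $x_1$.

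Once these points are corrected, your split on whether $x_0$ or $x_0'$ survives can be completed. It then amounts to a reorganization of the paper's proof. The paper instead splits on the parity of $n$ and freezes at the smallest index $j$ of the wrong parity with $\pt(x_j)\neq0$ or $\pt(x_j')\neq0$. It then combines induction on $C_j$ with the fact that the two endpoints of the double edge cannot both vanish.
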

\begin{proof}
 We will proceed by induction. Note that the claim holds when $n=2$ by Proposition \ref{prop:rk2} (Note the label scheme is flipped, as we label the initial seed as $(x_0,x_1)$ instead of $(x_1,x_2)$, and we have $c=2$, $b=1$.). 
 \\
 \par \textbf{Case 1: $n$ is odd.} Suppose at least one of $\pt(x_i)$ or $\pt(x_i')$ is non-zero for $i$ even. Taking the source-sink orientation of the type $C$ quiver ensures that after one mutation, the unoriented diagram will remain unchanged if we mutate at $x_i$. Now, we note that if $\pt(x_i)\ne0$, then $\pt$ lies in the image of a cluster chart given by the cluster algebra obtained by freezing at $i$ (or $i'$ if $\pt(x_i')\ne0$) by \cite[Lemma 3.2]{Muller2013LocallyAcyclic}. We can thus examine the cluster algebra with $x_i$ (or respectively, $x_i'$) frozen.
\begin{figure}[!h]
    \centering
    \begin{tikzpicture}
    
    \begin{scope}[yshift=0cm]
    \node[square, label=above :$f$] (B9) at (0,0) {};
    \foreach \x in {0,1,3,4} \node[node, label=above :$x_\x$] (B\x) at (\x+1,0) {};
    \draw[-{latex}](B9) to (B0);
    \node[square,label=above :$x_2$] (B2) at (2+1,0) {};
    \foreach \x in {1,3} \draw[-{latex}]  (B\the\numexpr\x+1\relax) to (B\x) ;
    \draw[-{latex}]  (B2) to (B3);
    \draw[double, double distance=2pt, postaction={decorate,
            decoration={
                markings,
                mark=at position 0.65 with {\arrow[scale=0.4]{Stealth[]}}
            }
        }] (B0) -- (B1);
\end{scope}

    \end{tikzpicture}
   
    \caption{An example of freezing at $x_i$ for $i$ even, $i>0$. Notably, the left part of the diagram is isomorphic to our type $C_2$ diagram, and the right part is isomorphic to the type $A_2$ diagram.}
    \label{fig:type_C5f}
\end{figure}
\par Now, note that freezing at this vertex results in the union of a type $C_{k}$ and $A_{l}$ quiver over a frozen where $k+l+1=n$. Notably, $k$ and $l$ are also both even (as $i$ was an even index) (see Figure \ref{fig:type_C5f}). 

\par Note that the clusters of this cluster algebra are a subset of the clusters of the $C_n$ cluster algebra. This means that if $\pt$ is a deep point of type $C_n$ which is in this cluster chart, then $\pt$ is deep in the cluster algebra associated to this chart. Furthermore, assume that $\pt|_{C_k}$ and $\pt|_{A_l}$ are both not deep. Then we can find a cluster for each that has all cluster variables non zero. Then we can use the same sequence of mutations on the $C_n$ quiver to obtain a cluster where $\pt$ evaluates to non zero on all cluster variables. Thus at least one of $\pt|_{C_k}$ or $\pt|_{A_l}$ must be deep. This analysis applies to any such $i$ with $\pt(x_i)$ or $\pt(x_i')\ne0$.

\par Now, take the minimum even $i$ that you can do this with. If $i=0$, then the resulting quiver is type $A_{n-1}$ which has no deep points, and we are done. For any other $i$, the resulting quiver will be $C_k$ with $k>0$ and even. By the inductive hypothesis, the only deep points of $C_k$ with $k$ even have $\pt(x_0)\ne 0$ (the inductive hypothesis gives $\pt(x_1)=0=\pt(x_1')$, and the first edge is a separating edge). This is a contradiction of either $\pt$ being deep, or $i$ being the smallest freezable value. Therefore, $\pt$ restricted to this $C_k$ component can not be deep. But $A_l$ has no deep points for $l$ even. Thus, after freezing at $i$ the point is no longer deep, and thus it was not a deep point.\\
 \par \textbf{Case 2: $n$ is even.} Let $\pt$ be a point with $\pt(x_i)\ne 0$ or $\pt(x_i')\ne 0$ for some odd $i$. Let $j$ be the smallest such $i$ and we will investigate freezing at $j$. If $j=1$, note that we can use the same argument as Lemma \ref{lem:not_deep_leaf}, noting that $C_2$ has no deep points with $\pt(x_1)\ne0$, and type $A_{n-2}$ has no deep points, we have that the point can not be deep (i.e. freeze at $x_1$, mutate the $A_{n-2}$ until the point is not deep, freeze at all vertices in $A_{n-2}$ then thaw $x_1$, and the resulting quiver has no deep points with $\pt(x_1)\ne0$).  
\begin{figure}[!h]
    \centering
    \begin{tikzpicture}

\begin{scope}[yshift=-1cm]
    \node[square, label=above :$f$] (B9) at (0,0) {};
    \foreach \x in {0,1,2,4,5} \node[node, label=above :$x_\x$] (B\x) at (\x+1,0) {};
    \draw[-{latex}](B9) to (B0);
    \node[square, label=above :$x_3$] (B3) at (3+1,0) {};
    \foreach \x in {2,4} \draw[-{latex}] (B\x)to (B\the\numexpr\x+1\relax) ;
    \foreach \x in {2,4} \draw[-{latex}] (B\x)to (B\the\numexpr\x-1\relax) ;
    \draw[double, double distance=2pt, postaction={decorate,
            decoration={
                markings,
                mark=at position 0.65 with {\arrow[scale=0.4]{Stealth[]}}
            }
        }] (B0) -- (B1);
\end{scope}
    \end{tikzpicture}
   
    \caption{An example of freezing at $x_i$ for $i$ odd, $i>1$. Notably, the left part of the diagram is the same as the diagram for $C_3$ and the right part is the diagram for $A_2$}
    \label{fig:type_C6f}
\end{figure}
\par For all other $j$, note that the resulting quiver is $C_k$ and $A_l$ with $k$ odd and $l$ even, see Figure \ref{fig:type_C6f}. Now, note that, by the fact that $j$ was minimal, $\pt(x_i)=\pt(x_i')=0$ for all odd $i<j$, and $k<n$. Now we note by the inductive hypothesis, such a point is not deep. (as $\pt(x_i)=\pt(x_i')=0$ for all even $i$ in a deep point, and the edges are all separating, as the quiver is acyclic). Furthermore, we note that type $A_l$ has no deep points for $l$ even. Thus, after freezing at $j$, the resulting point is not deep. Thus if $n$ even and $\pt(x_i)$ or $\pt(x_i')$ is non-zero, the point is not deep. Thus the only possible deep points are of the types specified in Lemmas \ref{lem:C_n_odd} and \ref{lem:C_n_even}.
\end{proof}
\subsection{Type \texorpdfstring{$F_4$}{F4}}

First, note that the Dynkin diagram of type $F_4$ is really full rank,

$$F_4:\begin{bmatrix}
    0&-1&0&0\\1&0&-2&0\\0&1&0&-1\\0&0&1&0
\end{bmatrix}$$

We can ignore any potential frozens. We will use the following labels on the initial quiver.
\begin{figure}[!h]
    \centering
    \begin{tikzpicture}
    
    \begin{scope}[yshift=0cm]
    \node at (0.3,0) {F$_4$};
    \foreach \x in {1,...,4} \node[node, label=above :$x_\x$] (B\x) at (\x,0) {};
    \draw[-{latex}](B2) to (B1);
    \draw[-{latex}](B4) to (B3);
    \draw[double, double distance=2pt, postaction={decorate,
            decoration={
                markings,
                mark=at position 0.65 with {\arrow[scale=0.4]{Stealth[]}}
            }
        }] (B3) -- (B2);
\end{scope}

    \end{tikzpicture}
   
    \caption{The quiver for the type $F_4$ cluster algebra. The orientation above is chosen for convenience.}
    \label{fig:type_F4}
\end{figure}
Now, we note that by Lemma \ref{lem:not_deep_leaf}, as the first and last vertices are leaves, and removing the second or third, the remaining quiver is type $A_2$, which has no deep points. Thus, if $\pt$ were a deep point, it must not take non-zero values on both vertex two and vertex three in the initial cluster, i.e. $\pt(x_2)=\pt(x_3)=0$. But, we know that the mutation relation at vertex two tells us
$$x_2x_2'=x_3^2+x_1$$
and thus the points where $\pt(x_2)=\pt(x_3)=0$ must also have $\pt(x_1)=0$. But note that the cluster relation at $x_1$ tells us 
$$x_1x_1'=x_2+1$$
But this implies that there is no point with $\pt(x_1)=\pt(x_2)=0$,  which contradicts what a deep point in this variety must have. Thus no point in the type $F_4$ cluster variety can be deep, and as there are no deep points, there are no mysterious points.

\bibliographystyle{alpha}
\bibliography{surface_deep_pts}

\end{document}